\newtheorem{theorem}{Theorem}[section]
\newtheorem{lemma}[theorem]{Lemma}
\newtheorem{corollary}[theorem]{Corollary}
\newtheorem{definition}[theorem]{Definition}
\numberwithin{equation}{section}
\DeclareMathOperator *{\essosc}{ess\ osc}
\DeclareMathOperator *{\osc}{osc}
\DeclareMathOperator *{\esssup}{ess\ sup}
\DeclareMathOperator *{\essinf}{ess\ inf}
\DeclareMathOperator *{\di}{div} 
\DeclareMathOperator *{\meas}{meas}
\DeclareMathOperator *{\dist}{dist}
\DeclareMathOperator *{\data}{data}
\DeclareMathOperator *{\diam}{diam}
\DeclareMathOperator *{\loc}{loc}
\DeclareMathOperator *{\Lip}{Lip}
\DeclareMathOperator *{\BMO}{BMO}
\DeclareMathOperator *{\BBR}{\mathbb{R}}
\DeclareMathOperator *{\BBC}{\mathbb{C}}
\newcommand{\Z}{{\mathbb Z}}
\begin{document}
\title[Dirichlet parabolic problem with a Carleson condition]{The Dirichlet boundary problem for second order parabolic operators satisfying Carleson condition}

\author{Martin Dindo\v{s}}
\address{School of Mathematics, \\
         The University of Edinburgh and Maxwell Institute of Mathematical Sciences, UK}
\email{M.Dindos@ed.ac.uk}

\author{Sukjung Hwang}
\address{Yonsei University, Korea}
\email{sukjung\_hwang@yonsei.ac.kr}

\date{Received: date / Accepted: date}
\thanks{Both authors  were partially supported by EPSRC EP/J017450/1 grant.}
\keywords{parabolic boundary value problem, Carleson condition, $L^p$ solvability}
\subjclass{35K10, 35K20}

\begin{abstract} We establish $L^p$, $2\le p\le\infty$ solvability of the Dirichlet boundary value problem for a parabolic equation
$u_t-\mbox{div}(A\nabla u) - \boldsymbol{B}\cdot\nabla u =0$ on time-varying domains with coefficient matrices $A=[a_{ij}]$ and $\boldsymbol{B} = [b_{i}]$ that satisfy a small Carleson condition. The results are sharp in the following sense. For a given value of $1<p<\infty$ there exists operators that satisfy Carleson condition but fail to have $L^p$ solvability of the Dirichlet problem. Thus the assumption of {\it smallness} is {\it sharp}.
Our results complements results of \cite{HL, Rn, Rn2} where solvability of parabolic $L^p$ (for some large $p$) Dirichlet boundary value problem for coefficients that satisfy large Carleson condition was established.
We also give a new (substantially shorter) proof of these results.
\end{abstract}

\maketitle

\section{Introduction}\label{S0:Intro}
This paper is motivated by the known results concerning boundary
value problems for second order divergence form elliptic operators,
when the coefficients satisfy a certain natural, minimal smoothness
condition. To be more specific, consider operators $L= \mbox{div}(A
\nabla) + \boldsymbol{B}\cdot \nabla$ such that $A(X)=[a_{ij}(X)]$ is uniformly elliptic in the
sense that there exist positive constants $\lambda,\,\Lambda$ such that
$$
\lambda|\xi|^2\leq \sum_{i,j} a_{ij}(X)\xi_i\xi_j \le
\Lambda|\xi|^2,
$$
for all $X$ and all $\xi\in \BBR^n$ and under appropriate conditions on the vector $\boldsymbol{B}= [b_{i}]$. We do not assume symmetry
of the matrix $A$. There are a variety of reasons for studying the
non-symmetric situation. These include the connections with
non-divergence form equations, and the broader issue of obtaining
estimates on elliptic measure in the absence of special $L^2$
identities which relate tangential and normal derivatives.

In \cite{KKPT}, the study of nonsymmetric divergence form
operators with bounded measurable coefficients was initiated. In
\cite{KP}, the methods of \cite{KKPT} were used to prove
$A_\infty$ results for the elliptic measure of operators satisfying (a variant of) the Carleson measure
condition. This result was further refined in the paper \cite{DPP} which considered the
$L^p(\partial\Omega)$ Dirichlet problem under the assumption that
\begin{equation}\label{carlI}
\delta(X)^{-1}\left(\mbox{osc}_{B_{\delta(X)/2}(X)}a_{ij}\right)^2
\end{equation}
and
\[
\delta(X)\left(\mbox{sup}_{B_{\delta(X)/2}(X)} b_{i}\right)^2
\]
are the densities of Carleson measures with small Carleson norms.

A recent paper \cite{DPR} has established similar results for the Neuman and Regularity boundary value problems.

The result we present here establish solvability of the $L^p$ Dirichlet boundary value problem
for the parabolic equation 
\begin{equation}
u_t-\mbox{div}(A\nabla u) - \boldsymbol{B}\cdot \nabla u=0
\end{equation}
 with coefficients that satisfy a similar Carleson condition adapted to parabolic settings.
To be specific, if $(X,t)$ is a point in a parabolic domain $\Omega$ (c.f. Definition \ref{D:domain}) (here $X$ denotes the spatial and $t$ the time variable), consider a parabolic distance between points
$$d[(X,t),(Y,\tau)]=(|X-Y|^2+|t-\tau|)^{1/2}.$$
In this metric, we consider the distance function $\delta$ of a point $(X,t)$ to the boundary $\partial\Omega$
$$\delta(X,t)=\inf_{(Y,\tau)\in \partial\Omega}d[(X,t),(Y,\tau)].$$
The parabolic version of the Carleson condition is that
\begin{equation}\label{carlII}
\delta(X,t)^{-1}\left(\mbox{osc}_{B_{\delta(X,t)/2}(X,t)}a_{ij}\right)^2
\end{equation}
and
\[
\delta(X,t) \left(\mbox{sup}_{B_{\delta(X,t)/2}(X,t)} b_{i}\right)^2
\]
are the densities of parabolic Carleson measures with small norms. Here, the ball $B_{\delta(X,t)/2}(X,t)$ is defined using the parabolic metric $d$ defined above. If the coefficients $(a_{ij})$  are time-independent, the condition (\ref{carlII})
becomes the condition (\ref{carlI}) as in the elliptic case.

Operators whose coefficients satisfy  Carleson
condition (\ref{carlII}) arise in the following context. Consider a domain $\Omega$ above a
graph $x_0=\psi(x,t)$, that is the set
$$\{(x_0,x,t):\, x_0>\psi(x,t)\}.$$
Here $X=(x_0,x)$ is the spatial variable ($x_0\in \BBR$, $x\in{\BBR}^{n-1}$) and $t$ denotes the time variable.
We shall assume that $\psi$ is Lipschitz in the variable $x$ and H\"older continuous of order $1/2$ in $t$. Actually, an additional assumption
(a half-derivative in $t$ direction in BMO) is needed, we formulate the condition in detail in the next section.

We consider a mapping $\rho : U \to \Omega$ (c.f. (\ref{mapping})) that maps
the upper half-space $U=\{(x_0,x,t)\in{\BBR}^+\times {\BBR}^{n-1}\times \BBR\}$ into $\Omega$.
If $v_t-\mbox{div}(A\nabla v)-\boldsymbol{B}\cdot \nabla v=0$ in $\Omega$, then $u = v \circ \rho$ will be a solution of a similar parabolic-type
equation $U$. It will be shown that if for example the coefficients of the matrix $A$ are smooth, the corresponding
matrix for the solution $u$ will satisfy a Carleson condition similar to (\ref{carlII}).

Hence, the condition (\ref{carlII}) arises naturally and leads to a question whether together with uniform ellipticity is sufficient for solvability of the
$L^p$ Dirichlet problem for the parabolic equation.

Our main result (Theorem \ref{T:Main}) is a qualitative refinement of \cite{Rn} and more recently \cite{Rn2}, the same way as \cite{DPP} refines \cite{KP} in the elliptic case. We show that the $L^p$ ($p\ge2$) Dirichlet problem for the parabolic equation is solvable, provided the Carleson norm of the coefficients is sufficiently small. As stated in the introduction, this result is {\it sharp} in the sense that the smallness assumption cannot be removed, for each given value of $p$ one can find a (non-symmetric) operator that satisfies all assumptions but has coefficients that only satisfy a sufficiently large Carleson condition for which the $L^p$ solvability fails.

If only large Carleson condition is assumed then one can only conclude as in \cite{Rn, Rn2} solvability of the parabolic problem for some (potentially very large) value of $p>1$ without any refined control on the size of $p$. This is due to the tools used in the proofs of these papers, namely the concepts of $\varepsilon$-aproximability and $A_\infty$ measure. We are able to recover these results as well, thanks to a crucial estimate we establish (Lemma \ref{L:Square2}) and give a significantly simplified argument. 

Our result has connections to other earlier results on the parabolic PDEs. In particular, solvability and $A_\infty$ of the caloric measure under stronger regularity conditions on coefficients and the mapping $\rho : U \to \Omega$ has been studied in Hofmann-Lewis \cite{HL1} and \cite{HL}.

Although our result is motivated by \cite{DPP} where the elliptic result was established, the parabolic problem represented a difficult new challenge where several new ideas were needed. For example, to control the solution in time direction we introduce so-called area function that plays role similar to square function does (in spatial directions) and we also establish relation between these two functions.

We note that previously, the method of layer potentials has been used
to solve parabolic PDE in \cite{Br1}, \cite{Br2} as well as \cite{JM}. Our method does not use layer potentials, instead we rely on a direct method introduced in \cite{DPP} using integration by parts and comparability of square and non-tangential maximal functions. It is not clear whether the rough coefficients we consider allow the use of layer potentials. If so, these results might be extendable to parabolic systems.

The paper is organized as follows. In Section 2, we give definitions and introduce our notation. In section 3 we
state our main results with short proofs. In Section 4 we state some basic (primarily interior) results for the parabolic equation. Estimates for the square function
are contained in Section 5 and finally in Section 6 we estimate the non-tangential maximal function. These two concepts are crucial in our proof. The square function arises naturally, in the process of integration by parts and the non-tangential maximal function is used in formulation of the $L^p$ Dirichlet problem. The fact that these two concepts are comparable in the $L^2$ norm is in the heart of our argument (c.f. Section 7).

{\bf Acknowledgment:} We are very grateful to the anonymous referee for careful reading of this paper which has allowed us to significantly improve the quality of our writing.

\section{Preliminaries}\label{S1:Pre}
\subsection{Admissible parabolic domain $\Omega$}

In the late 70's, Dahlberg \cite{Da} showed that in a Lipschitz domain harmonic measure and surface measure, $d\sigma$, are mutually absolutely continuous,
and furthermore, that the elliptic Dirichlet problem is solvable with data in $L^{2}(d\sigma)$. R. Hunt proposed the problem of finding analogue of
Dalhberg's result for the heat equation in domains whose boundaries are given locally as of functions $\psi(x,t)$ which are Lipschitz in the spatial variable.
It was conjectured at one time that $\psi$ should be $\Lip_{1/2}$ in the time variable, but subsequent counterexamples of Kaufmann and Wu \cite{KW} showed that
this condition does not suffice and that the caloric measure corresponding to the operator $\partial_t-\Delta$ on such domain might not belong to the $A_\infty$ class. Lewis and Murray \cite{LM} made significant progress toward a solution of Hunt's question, by establishing mutual absolute
continuity of caloric measure and a certain parabolic analogue of surface measure in the case that $\psi$ has $1/2$ of a time derivative in $\BMO (\mathbb{R}^n)$
on rectangles, a condition only slightly stronger than $\Lip_{1/2}$.

In this subsection we introduce class of time-varying domains whose boundaries are given locally as functions $\psi(x,t)$, Lipschitz in the spatial variable and
satisfying Lewis-Murray condition in the time variable. At each time $\tau\in\BBR$ the set of points in $\Omega$ with fixed time $t=\tau$, that is
$\Omega_\tau=\{(X,\tau)\in\Omega\}$ will be assumed to be a nonempty bounded Lipschitz domain in $\BBR^n$.  We choose to consider domains that are bounded (in space) since
this most closely corresponds to domains considered the paper \cite{DPP} (for the elliptic equation).
However, our result can be adapted to the case of unbounded domains (in space) (see \cite{HL} which focuses on the unbounded case).

Before we define \lq\lq admissible parabolic domain" we start with few preliminary definitions. If
 $\psi(x,t): \mathbb{R}^{n-1} \times \mathbb{R} \to \mathbb{R}$ is a compactly supported function we define the half time derivative by
\[
D_{1/2}^{t} \psi(x,t) = c_n \int_{\mathbb{R}} \frac{\psi(x,s) - \psi(x,t)}{|s-t|^{3/2}} \,ds
\]
for a properly chosen constant $c_n$ (depending on the dimension $n$). This is equivalent to traditional definition via the Fourier transform.

We shall also need a local version of this definition. If $I\subset \BBR$ is a bounded interval and $\psi(x,t)$ is defined on $\{x\}\times I$ we consider:
\[
D_{1/2}^{t} \psi(x,t) = c_n \int_{I} \frac{\psi(x,s) - \psi(x,t)}{|s-t|^{3/2}} \,ds,\qquad\mbox{for all }t\in I.
\]

We define a parabolic cube in $\mathbb{R}^{n-1} \times \mathbb{R}$,  for a constant $r>0$, as
\begin{equation}\label{D:Q}
Q_{r} (x, t)
= \{ (y, s) \in \mathbb{R}^{n-1}\times\mathbb{R} : |x_i - y_i| < r \ \text{for all } 1 \leq i \leq n-1, \ | t - s |^{1/2} < r \}.
\end{equation}
For a given $f: \mathbb{R}^{n} \to \mathbb{R}$ let,
\[
f_{Q_r} = |Q_r|^{-1} \int_{Q_r} f(x,t) \,dx\,dt.
\]
We say $f \in \BMO (\mathbb{R}^n) $ (this is the parabolic version of the usual BMO space) with the norm $\|f\|_{*}$ if and only if
\[
\|f\|_{*} = \sup_{Q_r} \left\{  \frac{1}{|Q_r|} \int_{Q_r} |f - f_{Q_r} | \,dx\,dt \right\} < \infty.
\]

Again, we also consider a local version of this definition. For a function $f:J\times I\to\BBR$, where $J\subset \BBR^{n-1}$ and $I\subset \BBR$ are closed bounded balls we consider the norm $\|f\|_{*}$ defined as above where the supremum is taken over all parabolic cubes $Q_r$ contained in $J\times I$.\vglue3mm

The following definitions are motivated by the standard definition of a Lipschitz domain.

\begin{definition}
$\Z \subset \BBR^n\times \BBR$ is an $L$-cylinder of diameter $d$ if there
exists a coordinate system $(x_0,x,t)\in \BBR\times\BBR^{n-1}\times \BBR$ obtained from the original coordinate system only by translation in spatial and time variables and rotation in the spatial variables such that
\[
\Z = \{ (x_0,x,t)\; : \; |x|\leq d,\; |t|\leq d^2, \; -(L+1)d \leq x_0 \leq (L+1)d \}
\]
and for $s>0$,
\[
s\Z:=\{(x_0,x,t)\;:\; |x|<sd,\; |t|\leq s^2d^2, \; -(L+1)sd \leq x_0 \leq (L+1)sd \}.
\]
\end{definition}

\begin{definition}\label{D:domain}
$\Omega\subset \BBR^n\times \BBR$ is an {\it admissible parabolic} domain
with `character' $(L,N,C_0)$ if there exists a positive scale $r_0$ such that for any time $\tau\in\BBR$
there are at most $N$ $L$-cylinders $\{{\Z}_j\}_{j=1}^N$ of diameter $d$, with
$\frac{r_0}{C_0}\leq d \leq C_0 r_0$ such that\vglue2mm

\noindent (i) $8{\Z}_j \cap \partial\Omega$ is the graph $\{x_0=\phi_j(x,t)\}$ of a
function $\phi_j$, such that
\begin{equation}
|\phi_j(x,t)- \phi_j(y,s)| \leq L[|x-y|+|t-s|^{1/2}], \qquad
\phi_j(0,0)=0\label{E:L1}
\end{equation}
and
\begin{equation}
\|D^t_{1/2}\phi_j\|_*\le L.\label{E:L2}
\end{equation}
\vglue2mm

\noindent (ii) $\displaystyle \partial\Omega\cap\{|t-\tau|\le d^2\}=\bigcup_j ({\Z}_j \cap \partial\Omega
)$,

\noindent (iii) In the coordinate system $(x_0,x,t)$ of the $L$-cylinder $\Z_j$:
$$\displaystyle{\Z}_j \cap \Omega \supset \left\{
(x_0,x,t)\in\Omega \; : \; |x|<d, \;|t|<d^2\;, \delta(x_0,x,t)=\mathrm{dist}\left( (x_0,x,t),\partial\Omega
\right) \leq \frac{d}{2}\right\}.$$
\end{definition}

Here the distance the the parabolic distance $d[(X,t),(Y,\tau)]=(|X-Y|^2+|t-\tau|)^{1/2}$ introduced in the Section 1.\vglue2mm

\noindent{\it Remark.} It follows from this definition that for each time $\tau\in\BBR$ the time-slice $\Omega_\tau=\Omega\cap\{t=\tau\}$ of an admissible parabolic domain $\Omega\subset \BBR^n\times \BBR$ is a bounded Lipschitz domain in $\BBR^n$ with `character' $(L,N,C_0)$. Due to this fact, the Lipschitz domains $\Omega_{\tau}$ for all $\tau\in\BBR$ have all uniformly bounded diameter (from below and above).

In particular, if ${\mathcal O}\subset \BBR^n$ is a bounded Lipschitz domain, then the parabolic cylinder $\Omega={\mathcal O}\times \BBR$ is an example of a domain satisfying Definition \ref{D:domain}.\vglue2mm

Topologically, any admissible domain $\Omega$ is homeomorphic to the cylinder $\Omega_{\tau}\times\BBR$ for any $\tau\in\BBR$. This is due to the fact that any two sets $\Omega_{\tau_1}$, $\Omega_{\tau_2}$ with $|\tau_1-\tau_2|<(\frac{r_0}{C_0})^2$ are topologically equivalent. Hence any two $\Omega_{\tau_1}$, $\Omega_{\tau_2}$ are homeomorphic. From this the existence of homeomorphism $\Omega\to \Omega_{\tau}\times\BBR$ follows.

\begin{definition}\label{D:measure}
Let $\Omega\subset \BBR^n\times \BBR$ be an {\it admissible parabolic} domain
with `character' $(L,N,C_0)$. Consider the following measure $\sigma$ on $\partial\Omega$. For $A\subset \partial\Omega$ let
\begin{equation}\label{E:sigma}
\sigma(A)=\int_{-\infty}^\infty {\mathcal H}^{n-1}\left(A\cap\{(X,t)\in\partial\Omega\}\right)dt.
\end{equation}
Here ${\mathcal H}^{n-1}$ is the $n-1$ dimensional Hausdorff measure on the Lipschitz boundary $\partial\Omega_t=\{(X,t)\in\partial\Omega\}$.
\end{definition}

We are going to consider solvability of the $L^p$ Dirichlet boundary value problem with respect to the measure $\sigma$. Note that under our assumption this measure
might not be comparable to the usual surface measure on $\partial\Omega$. This is due to the fact that in the $t$-direction the functions $\phi_j$ from the Definition \ref{D:domain}
are only half-Lipschitz and hence the standard surface measure might not be locally finite.

Our definition assures that for any $A\subset \Z_j\cap\partial\Omega$, where  $\Z_j$ is an $L$-cylinder we have
\begin{equation}\label{E:comp}
\sigma(A) \approx {\mathcal H}^n\left((x,t):\,(\phi_j(x,t),x,t)\in A\}\right),
\end{equation}
where the actual constants in \eqref{E:comp} by which these measures are comparable only depend on the $L$ of the `character' $(L,N,C_0)$ of domain $\Omega$.\vglue2mm

If $\Omega$ has smoother boundary, such as Lipschitz (in all variables) or better, then the measure $\sigma$ is comparable to the usual $n$-dimensional Hausdorff measure ${\mathcal H}^{n}$. In particular, this holds for the parabolic cylinder $\Omega={\mathcal O}\times \BBR$ mentioned above.

\subsection{Pullback transformation and Carleson condition}\label{S11:PC}

In this paper, we consider the parabolic differential equation
\begin{equation}\label{E:v}\begin{cases}
v_t = \di (A^{v} \nabla v)  + \boldsymbol{B}^{v} \cdot \nabla v & \text{in } \Omega, \\
v   = f^{v}                & \text{on } \partial \Omega
\end{cases}\end{equation}
where $A^{v}= [a^{v}_{ij}(X, t)]$ is an $n\times n$ matrix satisfying the uniform ellipticity condition and $\boldsymbol{B}^{v} = [b^{v}_{i}(X,t)]$ is a locally bounded $1 \times n$ vector with $X \in \mathbb{R}^{n}$, $t\in \mathbb{R}$, that is, there exists positive constants $\lambda^{v}$ and $\Lambda^{v}$ such that
\begin{equation}\label{E:UEBv}
\lambda^{v} |\xi|^2 \leq \sum_{i,j} a^{v}_{ij} \xi_i \xi_j \leq \Lambda^{v} |\xi|^2
\end{equation}
for all $\xi \in \mathbb{R}^{n}$. We work on \lq\lq admissible" domains $\Omega$ introduced above.\vglue2mm

Here and throughout the paper we will consistently use the notation denoting $\nabla v$ the gradient in the spatial variables, $v_t$ or $\partial_t v$ the gradient in the time variable and $Dv=(\nabla v, \partial_t v)$ the full gradient of $v$.\vglue2mm


We return to the pullback transformation. For simplicity (to avoid getting bogged down in technical details connected with localization) consider for the moment that
\begin{equation}\label{E:Omega}
\Omega = \{ (x_0, x, t) \in \mathbb{R}\times \mathbb{R}^{n-1} \times \mathbb{R} : x_0 > \psi(x,t) \}
\end{equation}
where $\psi(x,t): \mathbb{R}^{n-1} \times \mathbb{R} \to \mathbb{R}$ has compact support and satisfies condition (i) of the Definition \ref{D:domain}.

Our strategy to show the $L^2$ solvability of the PDE \eqref{E:v} is to take a pullback transformation $\rho:U\to\Omega$ and consider a new parabolic PDE
on the upper half-space
\begin{equation}\label{E:U}
U = \{ (x_0, x, t) : x_0 > 0, \ x \in \mathbb{R}^{n-1}, \ t \in \mathbb{R} \}
\end{equation}
obtained from the original PDE via the pullback.
To motivate the choice of our mapping $\rho$ consider first the obvious map $\tilde{\rho}: U \to \Omega$ defined by
\[
\tilde{\rho} (x_0, x, t) = (x_0 + \psi(x,t), x, t), \quad x\in\mathbb{R}^{n-1}, t\in \mathbb{R}.
\]
The new PDE for $u = v \circ \tilde{\rho}$ will yield an additional drift (first order) term
\[
\psi_{t} (X, t) u_{x_0} (X, t).
\]
Observe that $\psi_t$ might not defined everywhere because $\psi$ lacks the regularity in the $t$-variable (and hence $\boldsymbol{B}$ might be unbounded). A similar issue arises with the second-order coefficients; any regularity of the original coefficients $A^{v}$  will be lost after the pullback due to presence of $\psi_x$ which is only bounded.

To overcome this difficulty, we consider a mapping $\rho : U \to \Omega$ (c.f. \cite{HL}) in the setting of parabolic equations defined by
\begin{equation}
\rho (x_0, x, t) = (x_0 + P_{\gamma x_0}\psi(x,t), x, t).\label{mapping}
\end{equation}
See also \cite{Ne} for other uses of this map.
To define $P_{\gamma x_0}$, consider a non-negative function $P(x,t) \in C_{0}^{\infty}(Q_{1}(0,0))$, for $(x,t) \in \mathbb{R}^{n-1}\times\mathbb{R}$, and set
\[
P_{\lambda} (x,t) \equiv \lambda^{-(n+1)} P\left( \frac{x}{\lambda}, \frac{t}{\lambda^2} \right)
\]
and
\[
P_{\lambda} \psi(x,t) \equiv \int_{\mathbb{R}^{n-1}\times \mathbb{R}} P_{\lambda} (x-y, t-s) \psi(y,s) \,dy\,ds.
\]
Then $\rho$ satisfies
\[
\lim_{(y_0, y, s) \to (0, x, t)} P_{\gamma y_0} \psi(y,s) = \psi(x, t)
\]
and extends continuously to $\rho:\overline{U}\to \overline{\Omega}$. As follows from the discussion above the usual
surface measure on $\partial U$ is comparable with the measure $\sigma$ defined by \eqref{E:sigma} on $\partial\Omega$.

Suppose that $u = v \circ \rho$ and $f = f^{v} \circ \rho$. Then the PDE \eqref{E:v} transforms to a new PDE for the variable $u$
\begin{equation}\label{E:u}\begin{cases}
u_t = \di (A \nabla u) + \boldsymbol{B}\cdot \nabla u & \text{in } U, \\
u   = f                & \text{on } \partial U
\end{cases}\end{equation}
where $A = [a_{ij}(X, t)]$, $\boldsymbol{B} = [b_i (X, t)]$ are a $(n\times n)$ and $(1\times n)$ matrices. 
The precise relations between the original coefficients $A^v$ and $\boldsymbol{B}^v$ and the new coefficients $A$ and $\boldsymbol{B}$ for $u$ are worked out in \cite[pp. 448]{Rn2}, we refer the reader there for the details. 

We want to find properties of the coefficients $A$ and $\boldsymbol{B}$ of the parabolic equation \eqref{E:u}. We note that if the constant $\gamma > 0$ is chosen small enough then for $(x,t)\in \mathbb{R}^{n-1}\times \mathbb{R}$,
\[
\frac{1}{2} \leq 1 + \partial_{x_0} P_{\gamma x_0}\psi(x,t) \leq \frac{3}{2},
\]
and then the coefficients $a_{ij}, b_i : U \to \mathbb{R}$ are Lebesgue measurable and $A$ satisfies the standard ellipticity condition, since the original matrix $A^v$ did. That is there exist constants $\lambda$ and $\Lambda$ such that
\begin{equation}\label{E:UEB}
\lambda |\xi|^2 \leq \sum_{i j} a_{ij} \xi_i \xi_j \leq \Lambda |\xi|^2
\end{equation}
for any $\xi \in \mathbb{R}^n$.

\begin{definition} Let $\Omega$ be an admissible parabolic domain from Definition \ref{D:domain}.
For $(X,t) \in{\BBR}^{n}\times \BBR$ and $(Y,s)\in\partial\Omega$ and $r>0$ we write:
\begin{align*}
B_r(X,t) &=\{(Z,\tau)\in{\BBR}^{n}\times \BBR:\, d[(X,t),(Z,\tau)]<r\}\\
\Delta_r(Y,s) &= \partial \Omega\cap B_r(Y,s),\,\,\,\qquad T(\Delta_r) = \Omega\cap B_r(Y,s).
\end{align*}
\end{definition}

Here $d$ is the parabolic distance.

\begin{definition}\label{D:carl}
Let $T(\Delta_r)$ be the Carleson region associated to a surface
ball $\Delta_r$ in $\partial\Omega$, as defined above. A measure $\mu:\Omega\to\BBR^+$  is said to be
Carleson if there exists a constant $C=C(r_0)$ such that for all
$r\le r_0$ and all surface balls $\Delta_r$
\[\mu(T(\Delta_r))\le C \sigma (\Delta_r).\]
The best possible constant $C$ will be called the Carleson norm and shall be denoted by $\|\mu\|_{C,r_0}$. We write $\mu \in \mathcal C$.  If
$\displaystyle\lim_{r_0\to 0} \|\mu\|_{C,r_0}=0$,  we say that the
measure $\mu$ satisfies the vanishing Carleson condition and write $\mu \in \mathcal C_V$. 
Occasionally, for brevity we drop $r_0$ and just write  $\|\mu\|_{C}$ if the maximal radius of ball over which we calculate the Carleson norm is clear from the context. 
\end{definition}

When $\partial\Omega$ is locally given as a graph of a function $x_0=\psi(x,t)$ in the coordinate system $(x_0,x,t)$
and $\mu$ is a measure supported on $\{x_0>\psi(x,t)\}$ we can reformulate the Carleson condition locally using the parabolic cubes $Q_r$
and corresponding Carleson regions $T(Q_r)$ where
\begin{align*}
Q_{r} (y, s)
&= \{ (x, t) \in \mathbb{R}^{n-1}\times\mathbb{R} : |x_i - y_i| < r \ \text{for all } 1 \leq i \leq n-1, \ | t - s |^{1/2} < r \}\\
T(Q_r)&=\{(x_0,x, t) \in \mathbb{R}\times\mathbb{R}^{n-1}\times\mathbb{R} : \psi(x,t)<x_0<\psi(x,t)+r,\, (x,t)\in Q_{r} (y, s)\}.
\end{align*}

The Carleson condition becomes
\[\mu(T(Q_r))\le C|Q_r|=Cr^{n+1}.\]

We remark, that the corresponding Carleson norm will not be equal to the one from Definition \ref{D:carl} but these norms will be comparable. Hence the notion of
vanishing Carleson norm does not change if we take this as the definition of the Carleson norm instead of Definition \ref{D:carl}.

We also want to define $Q_r(Y,s)$ for $(Y,s)\in \mathbb{R}^{n-1}\times\mathbb{R}$, this ise defined as $Q_r(y,s)$ where $Y=(y_0,y)$.\vglue2mm

Observe also, that the function $\delta(X,t):=\inf_{(Y,\tau)\in \partial\Omega}d[(X,t),(Y,\tau)]$ that is measuring the distance of a point $(X,t)=(x_0,x,t)\in\Omega$ to the boundary $\partial\Omega$ is comparable to $x_0-\psi(x,t)$ which in turn is comparable to $[\rho^{-1}(X,t)]_{x_0}$ (the first component of the inverse map $\rho^{-1}$).

\vglue2mm


We now return to the pullback map $\rho:U\to\Omega$. We first note the Lemma A of \cite{HL} implying further structure of the transformed coefficients.

\begin{lemma}\label{L:A}
Let $\sigma$, $\theta$ be nonnegative integers and $\phi = (\phi_1, \ldots, \phi_{n-1})$, a multi-index, with $l=\sigma + |\phi| + \theta$. If $\psi$ satisfies that for all $x, y \in \mathbb{R}^{n-1}$, $t,s\in\mathbb{R}$ and for some positive constants $L_1, L_2 < \infty$
\[
|\psi(x,t) - \psi(y, s)| \leq L_1 \left(|x-y| + |t-s|^{1/2}\right)
\]
and
\[
\|D_{1/2}^{t} \psi\|_{*} \leq L_{2},
\]
then the measure $\nu$ defined at $(x_0, x, t)$ by
\[
d \nu = \left( \frac{\partial^{l} P_{\gamma x_0} \psi}{\partial x_{0}^{\sigma} \partial x^{\phi} \partial t^{\theta}} \right)^2 x_{0}^{2l + 2\theta - 3} \, dx \,dt \,dx_0
\]
is a Carleson measure whenever either $\sigma + \theta \geq 1$ or $|\phi| \geq 2$, with
\[
\nu \left[ (0, r) \times Q_{r}(x,t) \right] \leq c \left| Q_{r} (x,t) \right|.
\]
Moreover, if $l \geq 1$, then at $(x_0, x, t)$
\[
\left| \frac{\partial^{l} P_{\gamma x_0} \psi}{\partial x_{0}^{\sigma} \partial x^{\phi} \partial t^{\theta}} \right|
\leq c' (L_1 + L_2)  x_{0}^{1-l-\theta}
\]
where $c' = c'(n)$ and $c=c(L_1, L_2, \gamma, l, n) \geq 1$.
\end{lemma}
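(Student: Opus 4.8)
The plan is to reduce everything to two ingredients: the classical Littlewood--Paley / square-function estimate for the convolution kernel $P_\lambda$, together with the structural hypotheses on $\psi$ (Lipschitz in space, $\Lip_{1/2}$ in time, half-time-derivative in $\BMO$). The key observation is that differentiating $P_{\gamma x_0}\psi$ in $x_0$ a total of $\sigma$ times, in $x$ according to the multi-index $\phi$, and in $t$ a total of $\theta$ times produces a convolution of $\psi$ (or, better, of a difference quotient of $\psi$) against a new kernel that has the same $L^1$-normalized, compactly supported, mean-value-killing structure as $P$ itself, but rescaled: each $x_0$-derivative contributes a factor $\gamma x_0^{-1}$ and replaces $P$ by $x_0\partial_{x_0}$ of a dilate, each spatial derivative contributes $x_0^{-1}$ and replaces $P$ by $\partial_x P$, and each $t$-derivative contributes $x_0^{-2}$ and replaces $P$ by $\partial_t P$. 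Thus
\[
\frac{\partial^{l} P_{\gamma x_0} \psi}{\partial x_{0}^{\sigma}\partial x^{\phi}\partial t^{\theta}}(x_0,x,t)
= x_0^{-(|\phi|+2\theta)}\,\bigl(\widetilde P^{(\sigma,\phi,\theta)}_{\gamma x_0}*\psi\bigr)(x,t),
\]
where $\widetilde P^{(\sigma,\phi,\theta)}$ is a fixed smooth compactly supported function; and when $\sigma+\theta\ge1$ or $|\phi|\ge1$ this kernel has vanishing integral, so the convolution may be rewritten as $\widetilde P_{\gamma x_0}$ applied to $\psi(y,s)-\psi(x,t)$.

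First I would prove the pointwise bound. If $l\ge1$ then at least one derivative falls, so $\widetilde P^{(\sigma,\phi,\theta)}$ integrates to zero (when $|\phi|\ge 1$ or $\sigma\ge1$) or, in the remaining case $\sigma=|\phi|=0$, $\theta\ge1$, we use a half-derivative: write the single $t$-derivative as $D_{1/2}^t$ composed with another half-order operation and use $\|D_{1/2}^t\psi\|_*\le L_2$ together with the $\Lip_{1/2}$ bound. In the generic case, using the zero-mean property and the Lipschitz estimate $|\psi(y,s)-\psi(x,t)|\le L_1(|x-y|+|t-s|^{1/2})\le cL_1\gamma x_0$ on the support of $\widetilde P_{\gamma x_0}(x-\cdot,t-\cdot)$, together with $\|\widetilde P_{\gamma x_0}\|_{L^1}\le c$, one gets
\[
\Bigl|\widetilde P^{(\sigma,\phi,\theta)}_{\gamma x_0}*\psi\Bigr|\le c'(L_1+L_2)\,x_0,
\]
hence the claimed $|\partial^l(\cdots)|\le c'(L_1+L_2)x_0^{1-l-\theta}$ after restoring the $x_0^{-(|\phi|+2\theta)}$ factor and noting $l=\sigma+|\phi|+\theta$, $|\phi|+2\theta=(l-\sigma)+\theta$ so that $x_0^{1-(|\phi|+2\theta)}=x_0^{1-l-\theta+\sigma}\le x_0^{1-l-\theta}$ for $x_0$ small (and in any case one only needs the estimate up to scale $r_0$); a touch of care is needed because the stated exponent does not see $\sigma$, but each $x_0$-derivative genuinely gives an \emph{extra} power of $x_0^{-1}$ only through the $\gamma x_0^{-1}$ prefactor, which is already accounted for — I would double-check the bookkeeping here against \cite{HL}.

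Next, the Carleson estimate. By the pointwise bound the density is $\le c(L_1+L_2)^2 x_0^{2+2(\sigma-l-\theta)+2l+2\theta-3}=c(L_1+L_2)^2 x_0^{2\sigma-1}$, which is \emph{not} integrable in $x_0$ near $0$ when $\sigma=0$, so a crude pointwise bound does not suffice and one genuinely needs a square-function argument. The standard route: fix $Q_r(x,t)$ and estimate $\int_0^d\!\!\int_{Q_r}\bigl|\partial^l P_{\gamma x_0}\psi\bigr|^2 x_0^{2l+2\theta-3}\,$. After the rescaling identity above this becomes $\int_0^d x_0^{-1}\int_{Q_r}\bigl|\widetilde P_{\gamma x_0}*(\text{difference of }\psi)\bigr|^2\,dx\,dt\,\frac{dx_0}{1}$ — i.e. exactly a Littlewood--Paley $g$-function of $\psi$ in the parabolic metric, with $\lambda=\gamma x_0$. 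When at least one \emph{spatial} derivative or two spatial derivatives fall, $\widetilde P_{\gamma x_0}*\psi = \gamma x_0\,(\partial\text{-kernel})_{\gamma x_0}*\nabla\psi$ (integrating by parts once), and $\nabla\psi\in L^\infty$, so the $g$-function is controlled by $\|\nabla\psi\|_\infty^2\le L_1^2$ times $|Q_r|$ via the standard Carleson/Fefferman--Stein bound for convolution square functions against a mean-zero kernel (this is where $|\phi|\ge2$, or $|\phi|\ge1$ combined with a time or $x_0$ derivative, is used — one derivative is "spent" producing $\nabla\psi$, the rest produce the cancellation). When the falling derivatives are in $t$ (or $x_0$) only, one instead writes the kernel as a half-derivative composed with a mean-zero kernel, pairs it with $\psi$, and moves the half-derivative onto $\psi$: the $g$-function is then controlled by $\|D^t_{1/2}\psi\|_*^2\le L_2^2$ using the parabolic $\BMO$--Carleson inequality (Fefferman--Stein, parabolic version) — this is the analogue of the elliptic fact that $|\nabla^2(\text{harmonic ext of }\psi)|^2 x_0\,dx\,dx_0$ is Carleson with norm $\|\psi\|_{\BMO}$. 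The cylinder $(0,d)$ truncation and the compact support of $P$ keep all integrals finite and localize everything to a bounded multiple of $Q_r$.

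The main obstacle is the time direction: the mixed-derivative kernels are not literally of the form "$\gamma x_0$ times a mean-zero dilate", so extracting a full $g$-function with the right power $x_0^{2l+2\theta-3}$ requires carefully splitting each $\partial_t$ into "half-derivative hitting $\psi$" plus "half-derivative's worth of smoothing staying on the kernel", and then invoking a \emph{parabolic} $\BMO\to$ Carleson estimate rather than the $L^\infty\to$ Carleson estimate used in the spatial case. Verifying that the resulting kernel after this splitting still has the cancellation and decay needed for the Carleson bound, and that the powers of $x_0$ match $2l+2\theta-3$ exactly, is the delicate computation; everything else is routine once the rescaling identity is set up. Since this is precisely Lemma A of \cite{HL}, I would organize the proof so that the spatial case is self-contained and cite \cite{HL} for the parabolic $\BMO$ square-function input in the time case, or else reproduce the short Fefferman--Stein argument adapted to the parabolic cubes $Q_r$.
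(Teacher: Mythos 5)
The paper does not actually prove this statement: it is quoted verbatim as ``Lemma A of \cite{HL}'' and used as a black box, so there is no in-paper argument to compare yours against. Your strategy --- rewrite each derivative of $P_{\gamma x_0}\psi$ as a negative power of $x_0$ times a dilated, compactly supported, mean-zero kernel convolved with $\psi$; get the pointwise bound from the Lipschitz estimate of $\psi$ on the support of that kernel; and get the Carleson bound from a parabolic Littlewood--Paley/Fefferman--Stein estimate applied to $\nabla\psi\in L^\infty$ in the spatial case and to $D^t_{1/2}\psi\in\BMO$ in the time case --- is indeed the route taken in \cite{HL}, and you correctly identify why the hypotheses read $|\phi|\ge 2$ or $\sigma+\theta\ge 1$ (one spatial derivative is spent producing $\nabla\psi$; a $t$-derivative splits into two parabolic half-derivatives, one of which lands on $\psi$).

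There are two concrete problems. First, your rescaling identity drops the contribution of the $x_0$-derivatives: since $\partial_{x_0}P_{\gamma x_0}=\gamma(\gamma x_0)^{-1}\widetilde P_{\gamma x_0}$ with $\widetilde P=-(n+1)P-y\cdot\nabla_y P-2s\,\partial_s P$ (which has mean zero), the correct prefactor is $x_0^{-(\sigma+|\phi|+2\theta)}=x_0^{-(l+\theta)}$, not $x_0^{-(|\phi|+2\theta)}$. With the right exponent the pointwise bound comes out as exactly $x_0^{1-l-\theta}$ with no leftover factor $x_0^{\sigma}$, and the Carleson density reduces to $\bigl|\widetilde P^{(\sigma,\phi,\theta)}_{\gamma x_0}\ast\psi\bigr|^2\,x_0^{-3}$ for every admissible choice of indices; in particular your assertion that the density is pointwise $O(x_0^{2\sigma-1})$ and hence trivially integrable when $\sigma\ge1$ is false, and the square-function argument is needed in all cases, not only when $\sigma=0$. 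Second, the time-derivative case --- which you yourself single out as the delicate one --- is only described, not carried out: the splitting of $\partial_t$ into $D^t_{1/2}\circ D^t_{1/2}$ (defined via the Fourier transform rather than the difference-quotient formula), the verification that the residual half-order kernel retains the decay and cancellation needed, and the parabolic $\BMO$-to-Carleson embedding must all be written out. As a faithful sketch of the \cite{HL} argument the proposal is on target, but as it stands it is an outline with one genuine bookkeeping error rather than a proof.
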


The drift term $\boldsymbol{B}$ from the pullback transformation in \eqref{E:u} includes
\[
\frac{\partial}{\partial t} P_{\gamma x_0} \psi u_{x_0} .
\]
From Lemma~\ref{L:A} with $\sigma = |\phi| = 0$, $\theta = 1$, we see that
\[
x_0 \left[ \frac{\partial}{\partial t} P_{\gamma x_0} \psi (x,t) \right]^2 \,dX\,dt
\]
is a Carleson measure on $U$. Thus it is natural to expect that $\boldsymbol{B}$ will satisfy
\begin{equation}\label{E:B1}
x_0 |\boldsymbol{B}| (X,t) \leq \Lambda_B < C^{1/2}_{\epsilon}
\end{equation}
and
\begin{equation}\label{E:B2}
d\mu_1 (X,t) = x_0 |\boldsymbol{B}|^2(X,t) \,dX\,dt
\end{equation}
is a Carleson measure on $U$ with Carleson constant $C_{\epsilon}$. Indeed, this is the case provided the original vector vector $\boldsymbol{B}^{v}$ satisfies
the assumption that
\begin{equation}\label{carlB}
d\mu(X,t) = \delta(X,t) \left[\sup_{B_{\delta(X,t)/2}(X,t)}|\boldsymbol{B}^v|\right]^2\,dX\,dt
\end{equation}
is the density of Carleson measure in $\Omega$. Here $C_\epsilon=\|\mu_1\|_{C}$ depends on the Lipschitz constant $L$ (Definition \ref{D:domain}) and the Carleson norm of (\ref{carlB}).

Similarly, for the matrix $A$ we apply Lemma~\ref{L:A} with either $\sigma = 1$, $\phi =1$, $\theta = 0$ and $l=2$ or  $\sigma =\theta = 0$, $\phi =2$, and $l=2$ for $\nabla A$. For $A_{t}$, we take $\sigma = 0$, $\phi=\theta = 1$, and $l =2$. It follows using the calculation in \cite{Rn2} that $A$ will satisfy
\begin{equation}\label{E:A1}
(x_0 |\nabla A| + x_0^2 |A_t|) (X,t) < C^{1/2}_{\epsilon}
\end{equation}
for almost everywhere $(X,t) \in U$ and
\begin{equation}\label{E:A2}
d\mu_2 (X,t) = (x_0 |\nabla A|^2 + x_0^3 |A_t|^2)(X,t) \,dX\,dt
\end{equation}
is a Carleson measure on $U$ with the Carleson norm $C_{\epsilon}$,
provided the original matrix $(A^v)$ satisfies that
\begin{equation}\label{carlIII}
\begin{split}
&d\mu (X,t) = \\&\left(\delta(X,t) \left[\sup_{B_{\delta(X,t)/2}(X,t)}|\nabla A^v|\right]^2+\delta(X,t)^3 \left[\sup_{B_{\delta(X,t)/2}(X,t)}|\partial_t A^v|\right]^2\right)\,dX\,dt
\end{split}
\end{equation}
is the density of Carleson measure in $\Omega$. We note that if both $\|\mu\|_{C}$ and $L$ are small, then so is Carleson norm $C_{\epsilon}=\|\mu_1\|_{C}$ of the matrix $A$.

Observe that the condition (\ref{carlIII}) is slightly stronger than the condition (\ref{carlII}) we have claimed to assume in the introduction. We shall replace
the condition (\ref{carlIII}) by the weaker condition (\ref{carlII}) via perturbation results of \cite{Sw}, the details are in the following section.

\subsection{Admissible parabolic domains revisited.} \label{S:alld} We now return to the parabolic domains considered in Definition \ref{D:domain}. As follows from this definition, we can consider locally on each $L$-cylinder $\Z_j$ the pullback map $\rho_j$ defined as above since the boundary $\partial\Omega$ on $\Z_j$ is given as a graph of a function $\phi_j$.

We adapt results from the paper \cite{BZ}. Firstly, by Proposition 2.1 \cite{BZ} (the statement is for a bounded domain but it adapts to our case of an unbounded
domain in time direction), there exists a neighborhood $V$ of $\partial\Omega$ and smooth function $G:V\to {\mathbb S}^{n}$ such that for each $(X,t)\in U$
the unit vector $G(X,t)$ is in \lq good' direction. Here ${\mathbb S}^{n}\subset {\BBR}^{n+1}$ is the $n$-dimensional sphere. What that means is that with respect to a small ball around $(X,t)$ the boundary $\partial\Omega$ looks like a graph of a function with $x_0$ coordinate in the direction $G(X,t)$ (c.f. (i) of Definition \ref{D:domain}). Moreover, in our case the last (time component) of vector $G(X,t)$ vanishes.

Secondly, the concept of \lq\lq proper generalized distance" \cite[Proposition 3.1]{BZ} can be adapted to our setting. The function $\delta(X,t)$ measuring parabolic distance of a point $(X,t)\in\Omega$ to the boundary $\partial\Omega$ has been defined earlier. We claim that there exists a function $\ell\in C(\overline{\Omega})\cap C^\infty(\Omega)$ such that
$$\frac1K\le \frac{\ell(X,t)}{\delta(X,t)}\le K,$$
$$\nabla \ell(X,t)\ne 0,\qquad\text{for all $(X,t)$ in a neighborhood of }\partial\Omega,\,(X,t)\notin\partial\Omega$$
$$|\ell(X,t)-\ell(Y,s)|\le K[|X-Y|^2+|t-s|]^{1/2}.$$
Here $K\ge 1$ only depends on the character $(L,N,C_0)$ of the domain $\Omega$. It follows that $\ell$ can be used in place of the function $\delta$, but has an additional interior regularity. We construct $\ell$ slightly differently than in Proposition 3.1 of \cite{BZ}. On each $L$-cylinder $\Z$ as in Proposition \ref{D:domain} we have a map $\rho$ mapping neighborhood of $0\in U$ to a neighborhood of a boundary point in $\Omega$. For a point $(X,t)\in\Omega$ we define $\ell(X,t)=[\rho^{-1}(X,t)]_{x_0}$ where $[\cdot]_{x_0}$ denotes the first component of the vector in $U$.
This is equivalent to solving the following implicit equation:
$$x_0=\ell(X,t)+\int_{Q_1(0,0)}P(y,s)\phi(x-\gamma\ell(X,t)y,t-\gamma^2\ell^2(X,t)s)dy\,ds.$$
Here, $(X,t)=(x_0,x,t)$, $P$ is the function defined below (\ref{mapping}) and $\phi$ is the function defining $\partial\Omega$ as a graph on $\Z$.
This is essentially how $\ell$ is defined in Proposition 3.1 of \cite{BZ}, our modification takes into account the parabolic scaling of the metric $d$ in the time variable. We now construct a global function $\ell$ via gluing these functions on each coordinate chart via partition of unity on a neighborhood of $U$.
This will preserve
$$\nabla \ell(X,t)\ne 0,\qquad\text{for all $(X,t)$ in a neighborhood of }\partial\Omega,\,(X,t)\notin\partial\Omega$$
at least when the constant $L$ in the character of our domain $\Omega$ is small, since that ensures that overlapping coordinate charts are almost parallel.

We now have the result of Theorem 5.1 of \cite{BZ}. There exists $\epsilon_0>0$ such that for all $0<\epsilon\le\epsilon_0$ then
$$\Omega^\epsilon=\{(X,t)\in{\BBR}^{n+1}:\, \ell(X,t)>\epsilon\}$$
is a domain of class $C^\infty$ and there is a homeomorphism $f^\epsilon:\overline{\Omega}\to\overline{\Omega^\epsilon}$ such that $f_\epsilon(\partial\Omega)=\partial\Omega^\epsilon$ and $f_\epsilon:{\Omega}\to{\Omega^\epsilon}$ is a $C^\infty$ diffeomorphism.

In addition, if $\Omega_\tau$ and $\Omega^\epsilon_\tau$ denote the time slices of $\Omega$, $\Omega^\epsilon$ for a fixed time $t=\tau$ then $f_\epsilon:\overline{\Omega_\tau}=\overline{\Omega^\epsilon_\tau}$ is a bi-Lipschitz homeomorphism with Lipschitz constant independent of $\epsilon$ and $\tau$
and depending only on the $L$ in the character $(L,N,C_0)$ of the domain $\Omega$. In particular, this Lipschitz constant is small if $L$ is small.

\subsection{Parabolic Non-tangential cones and related functions} \label{S12:Not}

We proceed with the definition of parabolic non-tangential cones. We define the cones in a (local) coordinate system where $\Omega=\{(x_0,x,t):\,x_0>\psi(x,t)\}$. In particular this also applies to the upper half-space $U=\{(x_0,x,t), x_0>0\}$. We note here, that a different choice of coordinates (naturally) leads to different sets of cones, but as we shall establish the particular choice of non-tangential cones is not important as it only changes constants in the estimates for the area, square and non-tangential maximal functions defined using these cones. However the norms defined using different sets of non-tangential cones are comparable.

For a constant $a>0$, we define the parabolic non-tangential cone at a point $(x_0,x,t)\in\partial\Omega$ as follows
\begin{equation}\label{D:Gammag}
\Gamma_{a}(x_0, x, t) = \left\{(y_0, y, s)\in \Omega : |y - x| + |s-t|^{1/2} < a(y_0 - x_0), \  y_0 > x_0 \right\}.
\end{equation}
We occasionally truncate the cone $\Gamma$ at the height $r$
\begin{equation}\begin{split}\label{D:Gammagr}
&\Gamma_{a}^{r}(x_0, x, t) =\\ &\left\{(y_0, y, s)\in \Omega : |y - x| + |s-t|^{1/2} < a(y_0 - x_0), \  x_0 < y_0 < x_0 + r  \right\}.
\end{split}
\end{equation}

When working on the upper half space (domain $U$), $(0, x, t)$ is the boundary point of $\partial U$. In this case we shorten the notation and write
\begin{equation}\label{D:Gamma}
\Gamma_{a}(x,t) \qquad\mbox{instead of }\qquad \Gamma_{a}(0, x, t)
\end{equation}
and
\begin{equation}\label{D:Gammar}
\Gamma_{a}^{r}(x,t) \qquad\mbox{instead of }\qquad \Gamma^r_{a}(0, x, t).
\end{equation}

Observe that the slice of the cone $\Gamma_{a}(x_0, x, t)$ at a fixed height $h$ is the set
$$\{(y,s):\,(x_0+h,y,s)\in\Gamma_a(x_0,x,t)\}$$
which contains and is contained in a parabolic box $Q_{s}(x,t)$ of radius $s$ comparable to $h$, that is
for some constants $c_1, c_2$ depending only on the dimension $n$ and $a$ we have
\[
Q_{c_1 h}(x,t) \subset \{(y,s):\,(x_0+h,y,s)\in\Gamma_a(x_0,x,t)\} \subset Q_{c_2 h}(x,t).
\]

For a function $u : \Omega \rightarrow \mathbb{R}$, {\it the nontangential maximal function} $\partial\Omega\to \BBR$ and its truncated version at a height $r$ are defined as
\begin{equation}\label{D:NTan}\begin{split}
N_{a}(u)(x_0,x,t) &= \sup_{(y_0,y,s)\in \Gamma_{a}(x_0,x,t)} \left|u(y_0 , y, s)\right|, \\
N_{a}^{r}(u)(x_0,x,t) &= \sup_{(y_0,y,s)\in \Gamma_{a}^{r}(x_0,x,t)} \left|u(y_0 , y, s)\right|\quad\mbox{for }(x_0,x,t)\in\partial\Omega.
\end{split}\end{equation}

Now we define {\it the square function} $\partial\Omega\to \BBR$ (and its truncated version) asssuming $u$ has a locally integrable distributional gradient by
\begin{equation}\begin{split}\label{D:Square}
S_{a}(u)(x_0,x,t) &= \left(\int_{\Gamma_{a}(x_0,x,t)} (y_{0}-x_0)^{-n} |\nabla u|^{2}(y_0, y, s) \,dy_0\,dy\,ds\right)^{1/2}, \\
S_{a}^{r}(u)(x_0,x,t) &= \left(\int_{\Gamma_{a}^{r}(x_0,x,t)} (y_{0}-x_0)^{-n} |\nabla u|^{2}(y_0, y, s) \,dy_0\,dy\,ds\right)^{1/2}.
\end{split}\end{equation}
Observe that on the domain $U=\{(x_0,x,t):\,x_0>0\}$
\[
\|S_a (u)\|^{2}_{L^2(\partial U)} \approx \int_{U} y_{0} |\nabla u|^{2}(y_0, y, s) \,dy_0\,dy\,ds.
\]
where the implied constant depend on the aperture of the non-tangential cone.
\vglue2mm

Finally, we shall also need an object we call {\it the area function} $\partial\Omega\to \BBR$ defined by
\begin{equation}\label{D:Area}\begin{split}
A_{a} (u) (x_0,x,t) &= \left(\int_{\Gamma_{a}(x_0,x,t)} (y_{0}-x_0)^{-n+2} |u_{t}|^{2}(y_0, y, s) \,dy_0\,dy\,ds\right)^{1/2}, \\
A_{a}^{r} (u) (x_0,x,t) &= \left(\int_{\Gamma_{a}^{r}(x_0,x,t)} (y_{0}-x_0)^{-n+2} |u_{t}|^{2}(y_0, y, s) \,dy_0\,dy\,ds\right)^{1/2}.
\end{split}\end{equation}
Observe that the domain $U=\{(x_0,x,t):\,x_0>0\}$
\[
\|A_{a} (u)\|^{2}_{L^2 (\partial U)} \approx \int_{U} y_{0}^3 |u_{t}|^{2}(y_0, y, s) \,dy_0\,dy\,ds.
\]
Clearly, the square function can be used to control oscillation of a solution $u$ in the spatial directions
and similarly, the area function controls the solution in the time variable. Hence these two functions together allow us to control the solution $u$
in all variables.  We also note that we use the name area function because there is an obvious connection with \lq\lq area function" defined previously for elliptic PDEs which contains the term $\nabla^2 u$. In our case, the parabolic PDE $u$ satisfies implies that $|u_t|^2$ and $|\nabla^2 u|^2$ are closely related.

\subsection{$L^p$ Solvability of the Dirichlet boundary value problem}

\begin{definition} Let $1<p\le\infty$  and $\Omega$ be an admissible parabolic domain from the Definition \ref{D:domain}.
Consider the parabolic Dirichlet boundary value problem
\begin{equation}\label{E:para}\begin{cases}
v_t = \di (A \nabla v) + \boldsymbol{B}\cdot \nabla v & \text{in } \Omega, \\
v   = f \in L^p(\partial\Omega,d\sigma)              & \text{on } \partial \Omega,\\
N(v) \in L^p(\partial\Omega,d\sigma).
\end{cases}\end{equation}
where the matrix $A= [a_{ij}(X, t)]$ satisfies the uniform ellipticity condition, the vector $\boldsymbol{B}=[b_{i}]$ is locally bounded and $\sigma$ is the measure supported on $\partial\Omega$ defined by \eqref{E:sigma}.

We say that Dirichlet problem with data in $L^p(\partial \Omega, d\sigma)$
is solvable if the (unique) solution $u$ with continuous boundary data $f\in C_0(\partial\Omega)$, where 
$$C_0(\partial\Omega)=\{f:\partial\Omega\to\mathbb R:\, f\in C(\partial\Omega)\quad\&\quad |f|\to 0\mbox{ uniformly as }|t|\to\infty\}$$
satisfies the estimate
\begin{equation}\label{E:estim}
\|N(v)\|_{L^p(\partial \Omega, d\sigma)} \lesssim \|f\|_{L^p(\partial \Omega, d\sigma)}.
\end{equation}
The implied constant depends only the operator,
$p$, and the the triple $(L,N,C_0)$ of Definition \ref{D:domain}.
\end{definition}

{\it Remark.} It is well-known that the parabolic PDE (\ref{E:para}) with boundary data on $C_0(\partial\Omega)$ is uniquely solvable in the class $C_0(\overline\Omega)$. (There are continuous functions on $\overline\Omega$ with uniform decay to $0$ at $t\to\pm\infty$). This can be established by considering
approximation of bounded measurable coefficients of matrix $A$ by a sequence of smooth matrices $A_j$ and then taking the limit $j\to\infty$. This limit will
exists in $L^\infty(\Omega)\cap W^{1,2}_{loc}(\Omega)$ by the maximum principle (Lemma \ref{L:MP}) and the classical existence theory in $L^\infty(\mathbb R,W^{1,2}(\Omega))$. Uniqueness (for boundary data in $C_0(\partial\Omega)$) is also a consequence of the maximum principle.

If $p<\infty$, the space $C_0(\partial \Omega)$ is dense in $L^p(\partial \Omega, d\sigma)$. It follows that if the estimate
\[\|N(u)\|_{L^p(\partial \Omega, d\sigma)} \lesssim \|f\|_{L^p(\partial \Omega, d\sigma)}\]
holds for all continuous data, then for any $f\in L^p(\partial \Omega, d\sigma)$ there exists a solution $u$ to the equation (\ref{E:para}) such that \eqref{E:estim}
holds (by the continuous extension of the solution operator from $C_0(\partial \Omega)$ to $L^p(\partial \Omega,d\sigma)$).
Moreover, it can be shown that
$$u(X,t)=\lim_{(Y,s)\in\Gamma(X,t),\,(Y,s)\to (X,t)}u(Y,s),\qquad\mbox{for a.e. }(X,t)\in\partial\Omega.$$
\vglue2mm

{\it Remark 2.} The boundary value problem (\ref{E:para}) is defined on a domain unbounded in time (on both ends). However, once solvability of (\ref{E:para})
is established, the solvability of the following initial value problem also holds.
\begin{equation}\label{E:para2}\begin{cases}
v_t = \di (A \nabla v) + \boldsymbol{B}\cdot \nabla v & \text{in } \Omega \text{ for all $t>0$}, \\
v   = f \in L^p              & \text{on } \partial \Omega\cap\{t>0\},\\
v(X,0)=0&\text{on }\Omega\cap \{t=0\},\\
N(v) \in L^p(\partial\Omega\cap\{t>0\}).
\end{cases}\end{equation}
Indeed, if ${\mathcal O}=\Omega\cap \{t=0\}$ we might just consider $\Omega\cap \{t\le 0\}={\mathcal O}\times (-\infty,0]$. If we extend $f$ defined on $\partial \Omega\cap\{t>0\}$ onto whole $\Omega$ by setting $f=0$ on $\partial{\mathcal O}\times (-\infty,0]$ then the solution of (\ref{E:para}) restricted to
$\Omega\cap \{t\ge 0\}$ solves (\ref{E:para2}) since $u=0$ for $t\le 0$ and therefore $u=0$ at $t=0$.

A similar consideration also establishes solvability on a time interval $(-\infty,T]$, $T<\infty$ by considering an extension of $f$ by zero for $t>T$.\vglue2mm

\noindent {\it Remark 3.} (Parabolic measure). Since the equation (\ref{E:para}) has a unique continuous solution there exists a a measure $\omega^{(X,t)}$ such that
$$u(X,t)=\int_{\partial \Omega} f(Y,s) d\omega^{(X,t)}(Y,s)$$
for all continuous data called the parabolic measure. Under the assumption of Definition \ref{D:domain} and the drift term $\boldsymbol{B}$ having small Carleson norm this measure is doubling (c.f. \cite[Lemma 3.12 and 3.14]{HL}).
In this case, the $L^p$ solvability of the Dirichlet boundary value problem for some $p<\infty$ is equivalent to the parabolic measure $\omega$ being $A_\infty$
with respect to the measure $\sigma$ on the surface $\partial\Omega$.

\section{The Main Results}\label{S2:MainR}

\begin{theorem}\label{T:Main} Let $\Omega$ be a domain as in the Definition \ref{D:domain} with character $(L,N,C_0)$. Let $A=[a_{ij}]$ be a matrix with bounded
measurable coefficients defined on $\Omega$ satisfying the uniform ellipticity and boundedness with constants $\lambda$ and $\Lambda$ and $\boldsymbol{B} = [b_{i}]$ be a vector with  measurable coefficients defined on $\Omega$. In addition, assume that
\begin{equation}\label{E:carl4}
d\mu=\left[\delta(X,t)^{-1} \sup_{1\le i,j\le n}\left( \osc_{B_{\delta(X,t)/2}(X,t)} a_{ij} \right)^2 + \delta(X,t)\sup_{B_{\delta(X,t)/2}(X,t)} \left| \boldsymbol{B} \right|^{2} \right] \,dX\,dt
\end{equation}
is the density of a Carleson measure on $\Omega$ with Carleson norm $\|\mu\|_C$. Then there exists $\varepsilon>0$ such that if for some $r_0>0$ we have $\max\{L^2,\|\mu\|_{C,r_0}\}<\varepsilon$ then the $L^p$ boundary value problem
\begin{equation}\label{E:para3}\begin{cases}
v_t = \di (A \nabla v) + \boldsymbol{B}\cdot \nabla v & \text{in } \Omega, \\
v   = f \in L^p(\partial\Omega,d\sigma)              & \text{on } \partial \Omega,\\
N(v) \in L^p(\partial\Omega,d\sigma),
\end{cases}\end{equation}
is solvable for all $2\leq p < \infty$. Moreover, the estimate
\begin{equation}\label{E:estim3}
\|N(v)\|_{L^p(\partial \Omega, d\sigma)} \le C_p \|f\|_{L^p(\partial \Omega, d\sigma)},
\end{equation}
holds with $C_p=C_p(L,N,C_0,r_0,\lambda,\Lambda)$. It also follows that the parabolic measure of the operator $\partial_t-\di ({A} \nabla \cdot) - \boldsymbol{B}\cdot \nabla $ is doubling and belongs to $B_2(d\sigma)\subset A_\infty(d\sigma)$.
\end{theorem}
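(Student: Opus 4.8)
The plan is to follow the standard programme for $L^p$ Dirichlet solvability via the comparability of square and non-tangential maximal functions, carried out on the upper half-space $U$ after the pullback $\rho$, and then transferred back to $\Omega$. First I would reduce to the half-space model: using the local pullback maps $\rho_j$ on each $L$-cylinder $\Z_j$ together with the smoothing construction of Subsection \ref{S:alld} (the function $\ell$ and the approximating $C^\infty$ domains $\Omega^\epsilon$), the problem on $\Omega$ with data in $L^p(d\sigma)$ becomes a problem of the form \eqref{E:u} on $U$ whose coefficients $A$, $\boldsymbol{B}$ satisfy \eqref{E:A1}, \eqref{E:A2}, \eqref{E:B1}, \eqref{E:B2}. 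The hypothesis $\max\{L^2,\|\mu\|_{C,r_0}\}<\varepsilon$ guarantees that the transferred Carleson norms $C_\epsilon$ and the Lipschitz constant of the pullback are all small; the slightly stronger condition \eqref{carlIII} needed for the pullback (versus \eqref{carlII}/\eqref{E:carl4}) is recovered by first invoking the perturbation theory of \cite{Sw}, as announced after \eqref{carlIII}.

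Next, on $U$, I would establish the $L^2$ theory as the core estimate: for a solution $u$ of \eqref{E:u} with nice data one proves
\[
\|S_a(u)\|_{L^2(\partial U)} + \|A_a(u)\|_{L^2(\partial U)} \lesssim \|N_a(u)\|_{L^2(\partial U)},
\]
and conversely $\|N_a(u)\|_{L^2(\partial U)}\lesssim \|u\|_{L^2(\partial U)} + \|S_a(u)\|_{L^2(\partial U)}$, where the square function $S_a$ controls spatial oscillation and the area function $A_a$ controls the time variable; the relation between $A_a$ and $S_a$ coming from the PDE (used to convert $u_t$ into $\nabla^2 u$, hence into a square-function-type quantity after integration by parts) is the new ingredient relative to the elliptic case. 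Combining these with the good-$\lambda$ / stopping-time machinery yields $\|N_a(u)\|_{L^2(\partial U)}\lesssim \|f\|_{L^2(\partial U)}$, i.e. $L^2$ solvability, when the Carleson norms and $L$ are small. The crucial Lemma \ref{L:Square2} (the refined square function estimate) is what allows the argument to run with only \emph{small} Carleson norm while keeping $p=2$ available; the same lemma, combined with $\varepsilon$-approximability and $A_\infty$ arguments as in \cite{Rn, Rn2}, is what recovers the large-Carleson case for some large $p$.

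Then I would pass from $L^2$ to $L^p$ for $2\le p<\infty$. Since $L^2$ solvability on an admissible domain (together with the doubling property of parabolic measure, which holds under the small-drift assumption by \cite{HL}) is equivalent to $\omega\in B_2(d\sigma)$, the self-improvement property of $B_2$ (reverse Hölder weights) gives $\omega\in B_{2+\eta}$ for some $\eta>0$, and an interpolation/extrapolation argument then yields the estimate \eqref{E:estim3} for all $2\le p<\infty$, with the constant depending only on $(L,N,C_0,\lambda,\Lambda)$. Transferring back through $\rho_j$ and the partition of unity, and using \eqref{E:comp} to relate $\sigma$ on $\partial\Omega$ to $n$-dimensional measure on $\partial U$, gives solvability of \eqref{E:para3} on $\Omega$. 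Finally, the statement that the parabolic measure is doubling and lies in $B_2(d\sigma)\subset A_\infty(d\sigma)$ is then a restatement of what has been proved: doubling from \cite{HL} under the small-drift hypothesis, and $B_2$ precisely because $L^2$ Dirichlet solvability with the square-function estimate is equivalent to the reverse-Hölder $B_2$ bound on the density $d\omega/d\sigma$.

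The main obstacle I expect is the $L^2$ square-function estimate on $U$ in the presence of the drift term $\boldsymbol{B}$ and the non-symmetric, merely-Carleson matrix $A$: the integration-by-parts argument generates error terms weighted by $x_0|\nabla A|^2$, $x_0^3|A_t|^2$ and $x_0|\boldsymbol{B}|^2$, and these must be absorbed using the Carleson measure property together with a careful interplay between $S_a$, $A_a$ and $N_a$ — this is exactly where smallness of the Carleson norm is used and where the parabolic setting genuinely departs from \cite{DPP}, because the time derivative has no elliptic analogue and forces the introduction of the area function and the identity tying $|u_t|$ to $|\nabla^2 u|$.
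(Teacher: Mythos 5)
Your outline matches the paper's strategy for the core of the argument: reduce to the half-space via the pullback $\rho$, establish the $L^2$ comparability of the square, area and non-tangential maximal functions (Lemma \ref{L:Square}, Lemma \ref{L:AS}, Theorem \ref{T:NG}), absorb the small Carleson error terms to obtain $L^2$ solvability (Lemma \ref{L:Main}), and handle the oscillation-type hypothesis \eqref{E:carl4} by mollifying $A$ to a matrix $\widetilde A$ satisfying the gradient condition \eqref{E:carl6} and then transferring solvability back to $A$ (and adding the drift) with Sweezy's perturbation theorem \cite{Sw}, following \cite[Corollary 2.3]{DPP}. One difference of technique: the paper's converse bound $\|N(u)\|_{L^2}\lesssim \|f\|_{L^2}+\|S(u)\|_{L^2}$ comes from the local estimate of \cite{Rn} upgraded by a compactness/contradiction argument (Theorem \ref{T:NG}), not from good-$\lambda$/stopping-time machinery, but that is not a substantive divergence.

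The one step that does not work as you wrote it is the passage from $p=2$ to $2<p<\infty$. Gehring self-improvement sends $B_2$ to $B_{2+\eta}$, and $B_q$ corresponds to $L^{q'}$ solvability with $q'=q/(q-1)$; hence $B_{2+\eta}$ buys you solvability for exponents slightly \emph{below} $2$, not above. For $p>2$ you need $B_{p'}$ with $p'<2$, which follows from $B_2$ by the trivial nesting of reverse H\"older classes (together with doubling), not by self-improvement. The paper sidesteps the weight theory entirely at this stage: it interpolates the sublinear map $f\mapsto N(v)$ between the $L^2$ bound just proved and the $L^\infty$ bound supplied by the maximum principle $\|v\|_{L^\infty(\Omega)}\le\|f\|_{C(\partial\Omega)}$, which yields \eqref{E:estim3} for all $2\le p\le\infty$ directly. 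Either repair is routine, so the gap is minor, but as stated your extrapolation step points in the wrong direction and would not by itself deliver the claimed range of $p$.
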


Instead of \eqref{E:carl4} we can state the result using alternative assumptions. These are as in Theorem 2.13 of \cite{HL} (without few unnecessary extra technical conditions).

\begin{theorem}\label{T:Main2} Let $\Omega$ be a domain as in the Definition \ref{D:domain} with character $(L,N,C_0)$. Let $A=[a_{ij}]$ be a matrix with bounded
measurable coefficients defined on $\Omega$ satisfying the uniform ellipticity and boundedness with constants $\lambda$ and $\Lambda$ and $\boldsymbol{B}=[b_i]$ be a vector with measurable coefficients defined on $\Omega$. In addition, assume that
\begin{equation}\label{E:carl4a}
d\mu=\left(\delta(X,t) |\nabla A|^2+ \delta^3(X,t) |\partial_t A|^2 + \delta(X,t) |\boldsymbol{B}|^2\right) \,dX\,dt
\end{equation}
is the density of a Carleson measure on $\Omega$ with Carleson norm $\|\mu\|_C$ and
\begin{equation}\label{E:carl4b}
\delta(X,t) |\nabla A|+\delta^2(X,t) |\partial_t A| + \delta(X,t) |\boldsymbol{B}| \le \|\mu\|_C^{1/2}.
\end{equation}
Then there exists $\varepsilon>0$ such that if for some $r_0>0$ $\max\{L^2,\|\mu\|_{C,r_0}\}<\varepsilon$ then the $L^p$ boundary value problem \eqref{E:para3}
is solvable for all $2\leq p < \infty$. Moreover, the estimate \eqref{E:estim3} holds.
\end{theorem}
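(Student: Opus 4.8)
\textbf{Proof proposal for Theorem \ref{T:Main2}.}

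The plan is to reduce Theorem \ref{T:Main2} to Theorem \ref{T:Main}, which we have already established, by showing that the hypotheses \eqref{E:carl4a}--\eqref{E:carl4b} imply (after an appropriate perturbation) the hypothesis \eqref{E:carl4}. The key observation is that the oscillation of $a_{ij}$ over a parabolic ball $B_{\delta(X,t)/2}(X,t)$ can be controlled by integrating the gradient: for $(X,t)\in\Omega$ and any two points $(Y,s),(Y',s')\in B_{\delta(X,t)/2}(X,t)$ the mean value theorem along a path (first moving in space, then in time, staying inside $B_{3\delta(X,t)/4}(X,t)$ where $\delta$ is comparable to $\delta(X,t)$) gives
\[
\left|a_{ij}(Y,s)-a_{ij}(Y',s')\right|\lesssim \delta(X,t)\sup_{B_{3\delta/4}}|\nabla A| + \delta(X,t)^2\sup_{B_{3\delta/4}}|\partial_t A|,
\]
since a displacement of size $h$ in the spatial variables and a displacement of size $h^2$ in time both have parabolic length $\lesssim h$. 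Therefore
\[
\delta(X,t)^{-1}\left(\osc_{B_{\delta(X,t)/2}(X,t)}a_{ij}\right)^2 \lesssim \delta(X,t)\sup_{B_{3\delta/4}}|\nabla A|^2 + \delta(X,t)^3\sup_{B_{3\delta/4}}|\partial_t A|^2.
\]
A routine covering/Fubini argument (exactly the one used to pass between the sup-over-ball and pointwise versions of Carleson densities, as in the discussion around \eqref{carlIII}) shows that if the right-hand side of \eqref{E:carl4a} is a Carleson measure with small norm and \eqref{E:carl4b} holds, then $\delta(X,t)^{-1}\bigl(\osc_{B_{\delta/2}}a_{ij}\bigr)^2 + \delta(X,t)\sup_{B_{\delta/2}}|\boldsymbol B|^2$ is a Carleson measure with norm controlled by a constant multiple of $\|\mu\|_C$ (the drift term is identical). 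The passage from the pointwise bound $\delta|\nabla A|+\delta^2|\partial_t A|\le\|\mu\|_C^{1/2}$ in \eqref{E:carl4b} to the sup-over-ball bound needed in the interior of $\Omega$ costs only a dimensional constant, since $\delta$ is comparable on $B_{3\delta/4}(X,t)$.

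With this, the coefficient matrix $A$ and drift $\boldsymbol B$ satisfy all hypotheses of Theorem \ref{T:Main} with Carleson norm $\lesssim\|\mu\|_C$, so choosing $\varepsilon$ in Theorem \ref{T:Main2} smaller than $c^{-1}$ times the $\varepsilon$ of Theorem \ref{T:Main} (where $c$ is the implied constant above, depending only on $n$ and $(L,N,C_0)$), the condition $\max\{L^2,\|\mu\|_{C,r_0}\}<\varepsilon$ guarantees $\max\{L^2, c\|\mu\|_{C,r_0}\}$ is below the threshold of Theorem \ref{T:Main}. Hence \eqref{E:para3} is solvable for all $2\le p<\infty$ and \eqref{E:estim3} holds, with the doubling and $B_2(d\sigma)\subset A_\infty(d\sigma)$ properties of the parabolic measure carried over verbatim.

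The main obstacle is the bookkeeping in the Carleson-norm transfer: one must verify that replacing the pointwise quantities $|\nabla A|(X,t)$, $|\partial_t A|(X,t)$ by their suprema over $B_{\delta(X,t)/2}(X,t)$ (which is what the oscillation estimate forces) does not destroy the Carleson property. This is handled by the standard trick of enlarging the Carleson region: if $(X,t)\in T(\Delta_r)$ then the ball $B_{3\delta(X,t)/4}(X,t)$ is contained in $T(\Delta_{Cr})$ for a fixed dilation constant $C$, so integrating the sup-version density over $T(\Delta_r)$ is bounded by $C^{n+1}$ times integrating the pointwise density over $T(\Delta_{Cr})$, using that each point is counted boundedly often (a consequence of the doubling property of $\sigma$ and the comparability of $\delta$ on parabolic balls of bounded eccentricity). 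Everything else — the choice of $\varepsilon$, the invocation of Theorem \ref{T:Main}, the conclusions about parabolic measure — is immediate.
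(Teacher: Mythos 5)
There is a genuine gap at the heart of your reduction, namely the claim that a ``routine covering/Fubini argument'' upgrades the pointwise Carleson density \eqref{E:carl4a} to the sup-over-Whitney-ball density appearing in \eqref{E:carl4}. (Note also that the drift terms are \emph{not} identical: \eqref{E:carl4} carries $\delta\sup_{B_{\delta/2}}|\boldsymbol{B}|^2$ while \eqref{E:carl4a} carries only the pointwise $\delta|\boldsymbol{B}|^2$, so the same upgrade would be needed there too.) Your mean-value estimate for the oscillation is fine, but the subsequent step fails: a supremum of $|\nabla A|^2$ over a ball of radius $\approx\delta(X,t)$ cannot be dominated by an average of $|\nabla A|^2$ over a comparable ball, and enlarging the Carleson region only cures the harmless fact that $B_{3\delta/4}(X,t)$ may stick out of $T(\Delta_r)$ — it does nothing about the sup-versus-pointwise discrepancy. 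The cap supplied by \eqref{E:carl4b} is $|\nabla A|^2\le\|\mu\|_C\,\delta^{-2}$, and $\delta\cdot\|\mu\|_C\delta^{-2}=\|\mu\|_C\delta^{-1}$ is not integrable with a Carleson bound, so it cannot rescue the argument either. Indeed the implication is false: put a spike $|\nabla A|=\|\mu\|_C^{1/2}\delta^{-1}$ on one parabolic ball of radius $\epsilon_k 2^{-k}r$ inside each Whitney cube of $T(\Delta_r)$ at dyadic height $2^{-k}r$, with $\sum_k\epsilon_k^{n+2}<\infty$ (such spikes integrate to a small oscillation of $A$, so $A$ stays elliptic and satisfies \eqref{E:carl4b}); then \eqref{E:carl4a} has bounded Carleson norm, but each dyadic generation contributes a fixed amount $\approx\|\mu\|_C\,\sigma(\Delta_r)$ to $\int_{T(\Delta_r)}\delta\,\sup_{B_{\delta/2}}|\nabla A|^2\,dX\,dt$, whose Carleson norm therefore diverges with the number of scales. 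So Theorem \ref{T:Main2} cannot be obtained as a corollary of Theorem \ref{T:Main} along the route you propose.

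The intended logic runs in the opposite direction. Lemma \ref{L:Main} is stated and proved under \emph{either} the sup-type condition \eqref{E:carl6} \emph{or} the pair \eqref{E:carl4aa}--\eqref{E:carl4bb}, and the latter pair is exactly the hypothesis \eqref{E:carl4a}--\eqref{E:carl4b} of Theorem \ref{T:Main2}; after the pullback $\rho$ either alternative yields the pointwise conditions \eqref{E:B1}--\eqref{E:A2} on the flattened domain, which is all that the square-function estimates, Lemma \ref{L:AS} and Theorem \ref{T:NG} actually use. Theorem \ref{T:Main2} therefore follows directly from Lemma \ref{L:Main}, the maximum principle and interpolation, with no perturbation step at all. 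It is Theorem \ref{T:Main} that is the harder statement: there one mollifies $A$ to produce $\widetilde A$ satisfying \eqref{E:carl6} and then transfers solvability back to $A$ via the perturbation theorem of \cite{Sw}. Your oscillation-versus-gradient computation is essentially the reason that mollification works, but the dependence between the two theorems is the reverse of what your proposal assumes.
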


The final theorem is a direct corollary of the following lemma we establish in this paper.

\begin{lemma}\label{L:Square2} Let $\Omega$ be an admissible domain from Definition \ref{D:domain} of character $(L,N,C_0)$. Let ${\mathcal L}=\partial_t-\di(A\nabla\cdot)-B\cdot\nabla$ be a parabolic operator with matrix $A$ satisfying uniform ellipticity with constants $\lambda$ and $\Lambda$,  \eqref{E:carl4a} be a density of a Carleson measure on $\Omega$ with Carleson norm $\|\mu\|_{C}$   and  \eqref{E:carl4b} holds. 
Then there exists a constant $C=C(\lambda,\Lambda,N,C_0)$  such that
for any solution $u$ with boundary data $f$ on any ball $\Delta_r\subset\partial\Omega$ with $r\le \min\{r_0/4, r_0/(4C_0)\}$ (c.f. Definition \ref{D:domain} for the meaning of $r_0$ and $C_0$) we have
\begin{equation}\label{E:sq10aaa}
\int_{T(\Delta_r)} |\nabla u |^2 x_0\,dX\,dt
 \leq     C(1+\|\mu\|_{C,2r}) (1+L^2)\int_{\Delta_{2r}} (N^{2r})^{2}(u) \,dX\,dt.
\end{equation}

Here $N^{2r}$ denotes the truncated non-tangential maximal function and $u$ is any locally bounded  solution
(that is $\|N^{2r}(u)\|_{L^\infty(\Delta_{2r})}<\infty$).
\end{lemma}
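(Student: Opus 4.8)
The plan is to prove the square-function estimate \eqref{E:sq10aaa} by a Rellich-type integration-by-parts argument localized to the Carleson region $T(\Delta_r)$. Working in a local coordinate chart where $\partial\Omega$ is a graph $x_0=\psi(x,t)$ and (after the pullback of Section \ref{S11:PC}) the operator is of the form $u_t=\di(A\nabla u)+\boldsymbol B\cdot\nabla u$ on a piece of the upper half-space with $A,\boldsymbol B$ satisfying \eqref{E:A1}--\eqref{E:A2} and \eqref{E:B1}--\eqref{E:B2}, I would introduce a smooth cutoff $\eta$ adapted to $T(\Delta_{2r})$, equal to $1$ on $T(\Delta_r)$, together with the regularized distance $\ell$ (equivalently $x_0$ in the chart) as the weight. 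The starting identity is to multiply the equation by $x_0\,\eta^2 u$ and integrate by parts over the Carleson region; the principal term $\int A\nabla u\cdot\nabla u\, x_0\eta^2$ controls $\int_{T(\Delta_r)}|\nabla u|^2 x_0$ from below by ellipticity, and every other term must be absorbed or bounded by the right-hand side.

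The key steps, in order, are: (1) Carry out the integration by parts, producing (a) the good term $\gtrsim\int|\nabla u|^2x_0\eta^2$, (b) a boundary term on $\partial\Omega$ which vanishes or is controlled by $\|f\|$-type quantities (here one uses that $x_0$ vanishes on the boundary, so this term is in fact harmless), (c) cross terms involving $\nabla\eta$ and $\nabla x_0$ paired with $u\nabla u$, (d) the time term $\int u_t\, x_0\eta^2 u = \tfrac12\int \partial_t(u^2) x_0\eta^2$, which after integrating by parts in $t$ becomes $-\tfrac12\int u^2 x_0\,\partial_t(\eta^2)$, a term bounded by $\sup|\partial_t\eta^2|\int_{\mathrm{supp}\,\eta} u^2 x_0 \lesssim r^{-2}\cdot r\cdot|T(\Delta_{2r})|\sup_{T(\Delta_{2r})}u^2\lesssim \int_{\Delta_{2r}}(N^{2r}u)^2$, and (e) the drift term $\int (\boldsymbol B\cdot\nabla u)\,x_0\eta^2 u$. (2) For the cross terms (c), use Cauchy–Schwarz with a small parameter to split off $\varepsilon\int|\nabla u|^2 x_0\eta^2$ (absorbed into the good term) plus $C\varepsilon^{-1}\int u^2(|\nabla\eta|^2 x_0 + \eta^2 x_0^{-1}|\nabla x_0|^2)$; since $|\nabla x_0|\lesssim 1$, the second piece is $\lesssim \int u^2 x_0^{-1}\eta^2$ on the strip $0<x_0<2r$, which is again $\lesssim r^{-1}\cdot|T(\Delta_{2r})|\cdot (N^{2r}u)^2$-average, i.e. $\lesssim\int_{\Delta_{2r}}(N^{2r}u)^2$. (3) For the drift term (e), write it as $\int(\boldsymbol B\cdot\nabla u)\, x_0\eta^2 u$ and apply Cauchy–Schwarz: $\le \varepsilon\int|\nabla u|^2 x_0\eta^2 + C\varepsilon^{-1}\int |\boldsymbol B|^2 x_0\, u^2\eta^2$; the last integral is controlled because $d\mu_1=x_0|\boldsymbol B|^2\,dX\,dt$ is a Carleson measure, and a bounded function times a Carleson measure over $T(\Delta_{2r})$ is $\lesssim\|\mu\|_C\,\sigma(\Delta_{2r})\sup u^2\lesssim (1+\|\mu\|_{C,2r_0})\int_{\Delta_{2r}}(N^{2r}u)^2$. (4) Choose $\varepsilon$ small enough (depending only on $\lambda,\Lambda$) to absorb all the $\varepsilon\int|\nabla u|^2 x_0\eta^2$ terms into the good term; the surviving constant carries the factors $(1+\|\mu\|_{C,2r_0})(1+L^2)$, the $L^2$ entering through the Jacobian and coefficient distortion of the pullback map $\rho_j$ and the comparability of $\ell$ with $x_0$. (5) Finally, undo the localization: sum over the finitely many ($\le N$) coordinate charts covering $\Delta_r$, using that on an admissible domain $T(\Delta_r)$ is covered by boundedly many such graph pieces, and translate the half-space estimate back to $\Omega$ via \eqref{E:comp}.

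The main obstacle is step (c)/(2): handling the terms where the weight $x_0$ or the cutoff is differentiated without picking up an uncontrolled negative power of $x_0$. The naive estimate $\int u^2 x_0^{-1}\eta^2$ over the full Carleson region is only logarithmically worse than $\int_{\Delta_{2r}}(N^{2r}u)^2$ unless one is careful; the right move is to localize the $x_0$-integration to the region where $\eta$ transitions (an $O(r)$-thick slab near $x_0\sim r$) rather than all of $0<x_0<2r$, or equivalently to choose $\eta$ of the form $\eta(x_0,x,t)=\eta_0(x_0/r)\eta_1((x,t)/r)$ with $\eta_0$ supported away from $x_0=0$ — but then $\eta$ is no longer $1$ near the boundary and one must additionally handle the sliver $\{0<x_0<\epsilon r\}$ by a separate Caccioppoli estimate on Whitney cubes, summing a geometric series in the dyadic scales. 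A secondary delicate point is that $\boldsymbol B$ is only assumed locally bounded with $x_0|\boldsymbol B|$ bounded (by \eqref{E:B1}), so the drift term genuinely needs the Carleson structure of $x_0|\boldsymbol B|^2$, not mere boundedness; this is exactly where the smallness of $\|\mu\|_C$ will later be exploited, though for this lemma only finiteness of $\|\mu\|_{C,2r_0}$ is needed.
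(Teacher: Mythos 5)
Your overall strategy (test the equation against $u\,x_0\,\eta^2$ and integrate by parts over the Carleson box) is the same Rellich-type scheme the paper uses, and several of your individual estimates are fine: the time term $\int u_t\,u\,x_0\eta^2$ indeed reduces to $-\tfrac12\int u^2x_0\,\partial_t(\eta^2)\lesssim\int_{\Delta_{2r}}(N^{2r}u)^2$ in your un-normalized formulation, the $\nabla\eta$ cross terms carry a factor $x_0$ and are harmless, and the drift term is correctly handled by the Carleson embedding for $x_0|\boldsymbol B|^2$. The gap is the term produced by differentiating the weight, namely
\begin{equation*}
\int u\,\bigl(A\nabla u\cdot e_0\bigr)\,\eta^2\,dX\,dt=\sum_j\int a_{0j}\,u\,u_{x_j}\,\eta^2\,dX\,dt ,
\end{equation*}
which carries \emph{no} factor of $x_0$ and no small parameter. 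Your proposed Cauchy--Schwarz against $\varepsilon\int|\nabla u|^2x_0\eta^2$ forces the companion factor $\int u^2x_0^{-1}\eta^2$, and this integral genuinely diverges: since $u=f\neq0$ on the boundary, $\int_0^{2r}x_0^{-1}\,dx_0=\infty$. Neither of your suggested repairs fixes this. Moving the cutoff's transition region is irrelevant because this term comes from $\nabla x_0\equiv e_0$, which is supported everywhere, not where $\eta$ transitions; and the Whitney-cube decomposition does not produce a geometric series --- each dyadic layer $\{x_0\sim 2^{-k}r\}$ contributes $\approx\int_{\Delta_{2r}}(N^{2r}u)^2$, so the sum over scales is a logarithm of the truncation, not a convergent series.

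What is actually needed here is an exact cancellation, and this is the structural heart of the paper's proof. The paper first divides the quadratic form by $a_{00}$, working with $\sum_{i,j}\int\frac{a_{ij}}{a_{00}}u_{x_i}u_{x_j}x_0\zeta^2$, so that the dangerous $j=0$ component of the above term has coefficient exactly $1$: it becomes $\tfrac12\int\partial_{x_0}(u^2)\,\zeta^2$, which (with $\zeta$ independent of $x_0$) integrates in $x_0$ to the pure boundary terms $\int u^2(r,\cdot)-\int u^2(0,\cdot)$, both controlled by $\int(N^{2r}u)^2$. For $j\neq0$ one writes $1=\partial_{x_0}x_0$ and integrates by parts once more in $x_0$; this reintroduces a factor of $x_0$ so that the Carleson condition \eqref{E:A1}--\eqref{E:A2} on $\partial_{x_0}(a_{0j}/a_{00})$ applies, at the cost of a mixed second-derivative term $\int\frac{a_{0j}}{a_{00}}u\,\partial_{x_0x_j}u\,x_0\zeta^2$, which is then transferred to an $x_j$-derivative (possible precisely because $j\neq0$) and cancelled against its twin. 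The price of the $1/a_{00}$ normalization is that the time term now carries the coefficient $1/a_{00}$, and the paper must route it through $\partial_{x_0}(x_0^2)$ and the area function $\int|u_t|^2x_0^3$, controlled by Lemma \ref{L:AS}. Without this cancellation mechanism (or an equivalent one), the argument as you have written it does not close.
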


Using this result a new (significantly simplified) proof of $A_\infty$ property for parabolic operators (c.f. \cite{Rn, Rn2}) can be established. The paper \cite{Rn} states the result with conditions \eqref{E:carl4aa} and  \eqref{E:carl4bb} (and $B=0$)  and the paper \cite{Rn2} with the condition \eqref{E:carl4} (and $B=0$). \cite[Theorem 1.10]{HL} also contains a version of this result but with extra technical assumptions that were dealt with \cite{Rn}. However, \cite{Rn} does not allow first order terms, which \cite{HL} does handle. This realization allow us to state the next theorem for operators with first order terms satisfying large Carleson condition even though
 the doubling of parabolic measure is not known to be true in such case.

\begin{theorem}\label{T:Main3} Let $\Omega$ be an admissible domain from Definition \ref{D:domain}. Let ${\mathcal L}=\partial_t-\di(A\nabla\cdot)-B\cdot\nabla$ be a parabolic operator with matrix $A$ satisfying uniform ellipticity with constants $\lambda$ and $\Lambda$ and either (i) or (ii) holds, where:
\begin{itemize}
\item[(i)]  $\displaystyle d\mu=\left[\delta(X,t) |\nabla A|^2+ \delta^3(X,t) |\partial_t A|^2+ \delta(X,t)\sup_{B_{\delta(X,t)/2}(X,t)} \left| \boldsymbol{B} \right|^{2}\right] \,dX\,dt$
is the density of a Carleson measure on $\Omega$ with Carleson norm $\|\mu\|_C$ and
$$\delta(X,t) |\nabla A|+\delta^2(X,t) |\partial_t A|  \le \|\mu\|_C^{1/2}.$$
\item[(ii)] $d\mu=\left[\delta(X,t)^{-1} \sup_{1\le i,j\le n}\left( \osc_{B_{\delta(X,t)/2}(X,t)} a_{ij} \right)^2 + \delta(X,t)\sup_{B_{\delta(X,t)/2}(X,t)} \left| \boldsymbol{B} \right|^{2} \right] \,dX\,dt$ is the density of a Carleson measure on $\Omega$ with Carleson norm $\|\mu\|_C$.
\end{itemize}
Then there exists $p'>1$ such that the $L^p$ Dirichlet problem for the operator ${\mathcal L}$ on  $\Omega$ is solvable for all $p'<p\le\infty$.
\end{theorem}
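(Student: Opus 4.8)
The plan is to derive Theorem \ref{T:Main3} from Lemma \ref{L:Square2} by the classical route: first obtain an $L^2$ reverse-H\"older-type estimate relating the square function and the non-tangential maximal function, then bootstrap this into an $\varepsilon$-approximability statement (or, equivalently, a $BMO$/Carleson bound on $\log$ of the parabolic measure density), which yields $A_\infty$ of the parabolic measure $\omega$ with respect to $\sigma$, and finally translate $A_\infty$ into $L^p$ solvability for all $p$ sufficiently large. The point is that unlike Theorem \ref{T:Main}, here the Carleson norm $\|\mu\|_C$ need not be small, so one cannot run the direct perturbation argument; instead one must use the self-improving nature of the square-function estimate \eqref{E:sq10aaa}.

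The key steps, in order, are as follows. First, I would handle the reduction of case (ii) to case (i): as already indicated in Subsection~\ref{S11:PC}, the oscillation condition on $a_{ij}$ is controlled by the pointwise-plus-Carleson condition on $\nabla A$ and $\partial_t A$ after a mollification of the coefficients combined with the perturbation results of \cite{Sw}, so it suffices to treat (i). Second, with the operator normalized so (i) holds, I would localize: pull back via $\rho_j$ on each $L$-cylinder (or, more cleanly, use the smooth exhaustion $\Omega^\epsilon$ and the function $\ell$ from Subsection~\ref{S:alld}) so that the relevant estimates take place on (a perturbation of) the upper half-space $U$, where the square function reads $\int_U y_0|\nabla u|^2$. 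Third — the heart of the matter — I would show that the estimate of Lemma~\ref{L:Square2}, namely $\int_{T(\Delta_r)}|\nabla u|^2 x_0 \lesssim \int_{\Delta_{2r}} (N^{2r})^2(u)$, combined with the maximum principle bound $N^{2r}(u)\le \|f\|_\infty$ for bounded data, gives the local Carleson-measure estimate
\[
\int_{T(\Delta_r)} |\nabla u|^2 x_0\,dX\,dt \le C\,\|f\|_\infty^2\,\sigma(\Delta_{2r}),
\]
which is exactly the hypothesis needed for the Dahlberg–Jerison–Kenig-type argument (in its parabolic incarnation, following \cite{Rn, Rn2, HL}) showing that bounded solutions are $\varepsilon$-approximable, and hence that $\omega\in A_\infty(d\sigma)$.

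Fourth, once $\omega\in A_\infty(d\sigma)$, there is a $q>1$ with $\omega\in B_q(d\sigma)$, and by the standard duality between $B_q$ of the kernel and $L^p$ solvability of the Dirichlet problem (with $p'=q'$), one concludes solvability of the $L^p$ Dirichlet problem for all $p>p'$; the endpoint $p=\infty$ is the maximum-principle estimate and is immediate. I expect the main obstacle to be Step three: making the passage from the single-scale square-function bound \eqref{E:sq10aaa} to the full $\varepsilon$-approximability (equivalently, the $BMO$ bound on $\log k$, $k=d\omega/d\sigma$) rigorous in the parabolic, time-varying setting — one has to carefully use the doubling of $\omega$ (which, under (ii), or under (i) with $\boldsymbol{B}$ small, is known by \cite{HL}, but under (i) with large $\boldsymbol{B}$ is \emph{not} known, so the argument must be arranged to need only the Carleson estimate and not doubling a priori), control the area function $A_a(u)$ in the time direction alongside the spatial square function $S_a(u)$ using the relation between $|u_t|^2$ and $|\nabla^2 u|^2$ for solutions, and track the fact that only \emph{large} Carleson norm is available so no smallness can be exploited. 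The remaining bookkeeping — patching the local estimates across the finitely many cylinders $\Z_j$ via a partition of unity, and the limiting argument from $\Omega^\epsilon$ back to $\Omega$ — is routine given Subsections~\ref{S:alld} and the definitions in Subsection~\ref{S12:Not}.
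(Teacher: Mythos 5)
Your core idea is the paper's: combine Lemma \ref{L:Square2} with the maximum principle bound $N^{2r}(u)\le\|f\|_\infty$ to get the uniform Carleson estimate $\sigma(\Delta)^{-1}\int_{T(\Delta)}|\nabla u|^2x_0\,dX\,dt\le C$ for bounded solutions, and then invoke a ``square function Carleson bound $\Rightarrow A_\infty$'' principle plus the $B_q$/$L^p$ duality. Two points diverge, and one of them is a genuine gap. First, the route from the Carleson bound to $A_\infty$: you propose the classical $\varepsilon$-approximability machinery of \cite{Rn, Rn2, HL}, whereas the paper deliberately short-circuits this by citing the direct criteria of \cite{KKPT2} (elliptic) and \cite{DPP2} (parabolic); your route is essentially re-proving \cite{Rn} and is workable in principle, but it is exactly the longer argument the paper is advertising that it avoids.

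The genuine gap is in how you handle the drift term and the reduction of (ii) to (i). You propose to mollify $A$ and absorb the difference via the perturbation results of \cite{Sw}; but \cite[Theorem 4]{Sw} is a \emph{small}-perturbation result (it requires the Carleson norm of $\delta^{-1}|A-\widetilde A|^2$ to be sufficiently small), and in Theorem \ref{T:Main3} the Carleson norm is large, so the mollification error is large and \cite{Sw} does not apply. The paper instead uses the large-perturbation theorem \cite[Theorem 1.7]{HL} (a Fefferman--Kenig--Pipher-type result) both for (ii)$\to$(i) and for reinstating $\boldsymbol{B}$. Relatedly, your step three runs the $A_\infty$ argument with $\boldsymbol{B}$ included from the start, and you correctly observe that doubling of $\omega$ is not known for large drift --- but you leave that unresolved, and both the $\varepsilon$-approximability route and the direct route need some form of doubling or a substitute. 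The paper's resolution is an order-of-operations fix you should adopt: first prove $A_\infty$ for the drift-free operator $\partial_t-\di(\widetilde A\nabla\cdot)$ (where doubling is available), and only then add the first-order term with large Carleson norm via \cite[Theorem 1.7]{HL}, which yields $L^p$ solvability for some large $p$ without requiring doubling of the perturbed parabolic measure.
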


This theorem provides no control over the size of $p'$ (apart from the trivial estimate from below by $1$). But it highlights {\it sharpness} of Theorems \ref{T:Main} and \ref{T:Main2} in the following sense. For every $1<p<\infty$ there exist operators ${\mathcal L}$ satisfying all assumptions of Theorem \ref{T:Main3} with large $\|\mu\|_{C}$ for which the $L^p$ Dirichlet problem is NOT solvable. Hence, the {\it smallness} condition in Theorems \ref{T:Main} and \ref{T:Main2} is necessary and CANNOT be removed.
\vspace{1mm}

\noindent{\it Proof of Theorem \ref{T:Main}.}  Remark 3 above Theorem  \ref{T:Main} provides reference that the parabolic measure under our assumptions is doubling.
The proof of the remaining statements uses the $L^2$ solvability of Lemma~\ref{L:Main}, perturbation argument using result from \cite{Sw} and interpolation.
For perturbation results of this type see also Chapter III of \cite{HL} and \cite{N}.
The main Lemma~\ref{L:Main} establishes $L^2$ solvability of the Dirichlet problem on domains with small Lipschitz constant when \eqref{E:carl6}
is the density of Carleson measure with small norm on all parabolic Carleson regions of size $\le r_0$.
To replace the condition (\ref{E:carl4}) by
(\ref{E:carl6}) we use the idea of \cite[Corollary 2.3]{DPP}. For a matrix $A$ satisfying
(\ref{E:carl4}) with boundeness and ellipticity constants $\lambda$ and $\Lambda$ one can find
(by mollifying the coefficients of $A$) a new matrix $\widetilde{A}$
with same boundedness and ellipticity constants such that the matrix $\widetilde{A}$
satisfies (\ref{E:carl6}) and
\begin{equation}
\sup\{\delta(X,t)^{-1}|(A-\widetilde{A})(Y,s)|^2;\, Y\in
B_{\delta(X,t)/2)}(X,t)\}dX\,dt\label{NPePert}
\end{equation}
is the density of a Carleson measure. Moreover, if the Carleson norm for matrix $A$
is small (on balls of radius $\le r_0$), so are the Carleson norms
of (\ref{E:carl6}) for $\widetilde{A}$ and (\ref{NPePert}). Hence Lemma~\ref{L:Main} gives us $L^2$ solvability of the Dirichlet problem
on $\Omega$ for the parabolic equation $v_t = \di (\widetilde{A} \nabla v)$.

To get $L^2$ solvability for our original equation $v_t = \di ({A} \nabla v)$ we apply \cite[Theorem 4]{Sw}. This theorem states that if ${\mathcal L}_0=\partial_t-\di (\widetilde{A} \nabla \cdot)$ and ${\mathcal L}_1=\partial_t-\di ({A} \nabla \cdot)$ are two parabolic operators whose difference satisfies (\ref{NPePert}) with sufficiently small Carleson measure, then the $L^2$ solvability for the operator ${\mathcal L}_0$ implies the same for the operator ${\mathcal L}_1$ (We are not using \cite[Theorem 4]{Sw}
in its full generality, but making choice $p=q=2$ with the measure $d\mu$ in the theorem being the measure $d\sigma$ from the Definition \ref{D:measure}). From this the $L^2$ solvability of a parabolic operator without a drift term $\boldsymbol{B}\cdot \nabla$ satisfying (\ref{E:carl4}) follows, provided the Carleson norm is sufficiently small. 

To include the drift term $\boldsymbol{B}\cdot \nabla$ it is necessary to revisit the proof given in \cite{Sw} in the light of results in \cite[Chapter III]{HL}.  The paper \cite{Sw} does not consider the drift term in the formulation of its main result but is forced to deal with it partially anyway (c.f. Lemma 2 for example where reflection across $Lip(1,1/2)$ boundary is mentioned). Further missing ingredients for adapting result of Sweezy to allow a small drift term (in terms of Carleson measure) are all in \cite{HL}, namely the issue of the parabolic measure being doubling if  a small drift term is present and the existence of a well-behaved Green's function in the presence of such drift term (c.f. Lemma 2.2 of Chapter III of this paper). With this in place the main result of Sweezy also holds for operators ${\mathcal L}_0=\partial_t-\di ({A} \nabla \cdot)$ and ${\mathcal L}_1=\partial_t-\di ({A} \nabla \cdot)+\boldsymbol{B}\cdot \nabla$ under the condition 
\begin{equation}
\sup\{\delta(X,t)|(\boldsymbol{B})(Y,s)|^2;\, Y\in
B_{\delta(X,t)/2)}(X,t)\}dX\,dt\nonumber
\end{equation}
has a small Carleson norm.

Finally, given the solvability of the continuous boundary value problem and the maximum principle $\|v\|_{L^\infty(\Omega)}\le \|f\|_{C_0(\partial\Omega)}$ the solvability for all values $2<p<\infty$ follows by interpolation.\qed
\vspace{1mm}

\noindent{\it Proof of Theorem \ref{T:Main2}.}  The Lemma \ref{L:Main} holds either with \eqref{E:carl6} or alternatively with
\eqref{E:carl4a} and \eqref{E:carl4b}. Either one of those yields (\ref{E:B1})-(\ref{E:A2}) for the parabolic equation on the flattened domain $U$. The rest of the argument
is identical to Theorem \ref{T:Main}.\qed
\vspace{1mm}

\noindent{\it Proof of Theorem \ref{T:Main3}.}  Consider first the case when $B=0$ and (i) holds. It follows from Lemma \ref{L:Square2} that if $u$ is a solution of a parabolic PDE with coefficients satisfying condition (i) with boundary data $f$, such that $\|f\|_\infty\le 1$
then for some $C=C(\lambda,\Lambda,L,N,C_0,\|\mu\|_{C})>0$ on all balls $\Delta\subset\partial\Omega$ we have
$$\frac1{\sigma(\Delta)}\int_{T(\Delta)} |\nabla u |^2 x_0\,dX\,dt
 \leq C.$$
 Here we have used the maximum principle implying $N^{2r}(u)\le 1$ and doubling property of measure $\sigma$. This type of condition readily implies $A_\infty$ of the measure associated with the operator $\mathcal L$ and in particular $L^p$ solvability for all (large) values of $p$.  In the elliptic case this has been established in \cite{KKPT2} and in the parabolic case (as well as elliptic with much simplified argument) in \cite{DPP2}. 
If condition (ii) holds then as in the proof of Theorem \ref{T:Main} a new matrix $\widetilde{A}$ can be constructed that satisfies (i). Hence ones gets $A_\infty$ property for the operator $\partial_t-\mbox{div}(\widetilde{A}\nabla\cdot)$ by the argument given above. To get the $L^p$ solvability of the original operator (with first order terms) we use the perturbation result \cite[Chapter III, Theorem 1.7]{HL} which does allow to handle first order terms with 
 $\delta(X,t)\sup_{B_{\delta(X,t)/2}(X,t)} \left| \boldsymbol{B} \right|^{2}dX\,dt$ having a large Carleson norm. As observed in \cite{HL} the $L^p$ solvability holds even though the doubling of the parabolic measure is not known for such operators. \qed

\section{Basic results and Interior estimates}\label{S13:BI}

In this section we state some basic results and interior estimates that will be needed later.
These two lemmas are modifed versions of Lemmas 2.3 and 2.4 in \cite{Di} for elliptic equations adapted to the parabolic setting.
\begin{lemma}\label{L:cover}
Let $E \subset \mathbb{R}^{n-1}\times\mathbb{R}$. Suppose that for each $(X, t) \in E$ a number $r(X,t)>0$ is given. Also assume that $\sup_{(X, t)\in E} r(X,t) < \infty$. Then there exists a sequence $(X_{i}, t_{i}) \in E$, $i=1,2,3,\dots$ such that all cubes $Q_{r_{i}} (X_i, t_i)$ ($r_i = r(X_i, t_i)$) are disjoint and
\begin{align*}
&(i) \  E \subset \bigcup_{i} Q_{3r_i} (X_i, t_i), \\
&(ii) \ \text{for all } (X, t) \in E, \text{ there exists $(X_j, t_j)$ such that } Q_{r(X,t)}(X, t) \subset Q_{5 r_j} (X_j, t_j) .
\end{align*}
\end{lemma}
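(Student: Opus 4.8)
The plan is to run the classical Vitali / "$5r$-covering" selection adapted to the parabolic cubes $Q_r$, organized by \emph{dyadic generations} of the radius function $r(\cdot)$; the generation device is exactly what forces the conclusions to hold with the absolute constants $3$ and $5$ rather than with constants depending on $\sup r$.

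First I would set $R=\sup_{(X,t)\in E}r(X,t)<\infty$ and, for each integer $j\ge1$, split $E$ into the generations
\[
E_j=\bigl\{(X,t)\in E:\ R2^{-j}<r(X,t)\le R2^{1-j}\bigr\},
\]
so that $E=\bigcup_{j\ge1}E_j$ is a disjoint union. I would then select greedily: let $\mathcal G_1$ be a maximal pairwise-disjoint subfamily of $\{Q_{r(X,t)}(X,t):(X,t)\in E_1\}$, and, having chosen $\mathcal G_1,\dots,\mathcal G_{j-1}$, let $\mathcal G_j$ be a maximal pairwise-disjoint subfamily of those cubes $Q_{r(X,t)}(X,t)$ with $(X,t)\in E_j$ that are in addition disjoint from every cube already chosen (such maximal families exist by Zorn's lemma). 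Put $\mathcal G=\bigcup_{j\ge1}\mathcal G_j$ and let $(X_i,t_i)$ enumerate the centers of its cubes, $r_i=r(X_i,t_i)$. The cubes of $\mathcal G$ are pairwise disjoint by construction --- within a generation by maximal disjointness, across generations by the inductive condition --- and, being nonempty disjoint open subsets of $\mathbb R^{n}\times\mathbb R$, each contains a point of $\mathbb Q^{n+1}$ lying in no other, so $\mathcal G$ is countable and the indexing $i=1,2,3,\dots$ is legitimate. This already yields the disjointness assertion of the lemma.

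Next I would check the two inclusions. Fix $(X,t)\in E$, say $(X,t)\in E_j$, and write $r=r(X,t)$. By maximality of $\mathcal G_j$, either $Q_r(X,t)\in\mathcal G_j$ (so $Q_r(X,t)$ is itself one of the selected cubes and both (i), (ii) are trivial), or $Q_r(X,t)$ meets a selected cube $Q'=Q_{r'}(X_i,t_i)\in\mathcal G_1\cup\dots\cup\mathcal G_j$, in which case $r'>R2^{-j}\ge\tfrac12 r$, i.e.\ $r<2r'$. Fix $(W,\eta)\in Q_r(X,t)\cap Q'$. For (ii), for an arbitrary $(Z,\tau)\in Q_r(X,t)$ the triangle inequality in each spatial coordinate $k$ gives $|Z_k-(X_i)_k|<r+r+r'<5r'$, and in the time variable $|\tau-t_i|<r^2+r^2+r'^2<9r'^2$, hence $|\tau-t_i|^{1/2}<3r'<5r'$; thus $Q_r(X,t)\subset Q_{5r_i}(X_i,t_i)$. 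For (i), taking $(Z,\tau)=(X,t)$ and using one fewer triangle-inequality step gives $|X_k-(X_i)_k|<r+r'<3r'$ and $|t-t_i|<r^2+r'^2<9r'^2$, so $(X,t)\in Q_{3r_i}(X_i,t_i)$. Since $(X,t)\in E$ was arbitrary, (i) and (ii) follow.

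I do not expect a genuine obstacle here: this is a routine covering argument. The only points needing care are bookkeeping the parabolic scaling --- the time coordinate of $Q_r(X,t)$ has "size" $r^2$, not $r$ --- when running the triangle inequalities, and observing that the dyadic-generation scheme is included precisely so that any selected cube meeting $Q_{r(X,t)}(X,t)$ has radius at least $\tfrac12 r(X,t)$, which is exactly what makes the enlargement factors the absolute constants $3$ and $5$.
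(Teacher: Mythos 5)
Your proof is correct and is exactly the classical Vitali-type $5r$-covering argument (dyadic generations of the radius, greedy maximal disjoint selection, then chained triangle inequalities with the parabolic scaling $|t-s|<r^2$ handled separately), which is the same argument the paper invokes by citing Lemmas 2.3--2.4 of \cite{Di}; the paper itself gives no written proof. The constant bookkeeping checks out: $r<2r'$ gives $r+r+r'<5r'$ spatially and $r^2+r^2+r'^2<9r'^2$, hence $|\tau-t_i|^{1/2}<3r'<5r'$ in time, and similarly $r+r'<3r'$ and $(r^2+r'^2)^{1/2}<3r'$ for part (i).
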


\begin{lemma}\label{L:Equ} Let $r>0$ and $0< a < b$. Consider the non-tangential maximal functions defined using two set of cones cones $\Gamma_{a}^{r}$ and $\Gamma_{b}^{r}$.
Then for any $p>0$ there exists a constant $C_p>0$ such that
\[
N_{a}^{r} (u) \leq N_{b}^{r} (u),\quad \|N_{b}^{r}(u)\|_{L^p(\partial U)}\leq C_p\|N_{a}^{r}(u)\|_{L^p(\partial U)},
\]
for all $u:U\to\BBR$.
\end{lemma}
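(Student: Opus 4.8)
The first inequality is immediate. Since $0<a<b$ we have $\Gamma_a^r(x_0,x,t)\subset\Gamma_b^r(x_0,x,t)$ for every boundary point, so the supremum defining $N_a^r(u)$ runs over a subset of the one defining $N_b^r(u)$; hence $N_a^r(u)\le N_b^r(u)$ pointwise. For the reverse $L^p$ bound the plan is to run the classical ``change of aperture'' argument, comparing distribution functions directly. Throughout I assume $u$ is such that $N_a^r(u)$ and $N_b^r(u)$ are measurable; this holds whenever $u$ is continuous (the only case needed in applications), since $\Gamma_a^r$, $\Gamma_b^r$ are open cones and therefore $\{N_a^r(u)>\lambda\}$ is open. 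Fix $\lambda>0$ and set
\[
E_\lambda=\{(x,t)\in\partial U:\ N_b^r(u)(x,t)>\lambda\},\qquad F_\lambda=\{(x,t)\in\partial U:\ N_a^r(u)(x,t)>\lambda\}.
\]
The goal is the pointwise bound on distribution functions $|E_\lambda|\le C'\,|F_\lambda|$ with $C'=C'(a,b,n)$. Granting this, the layer-cake formula gives, for every $0<p<\infty$,
\[
\|N_b^r(u)\|_{L^p(\partial U)}^p=p\int_0^\infty\lambda^{p-1}|E_\lambda|\,d\lambda\le C' p\int_0^\infty\lambda^{p-1}|F_\lambda|\,d\lambda=C'\,\|N_a^r(u)\|_{L^p(\partial U)}^p,
\]
so the lemma follows with $C_p=(C')^{1/p}$.

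To prove $|E_\lambda|\le C'|F_\lambda|$, let $(x,t)\in E_\lambda$ and choose $(y_0,y,s)\in\Gamma_b^r(x,t)$ with $|u(y_0,y,s)|>\lambda$; thus $0<y_0<r$ and $|y-x|+|s-t|^{1/2}<by_0$. Every boundary point $(z,w)$ with $|z-y|+|w-s|^{1/2}<ay_0$ then satisfies $(y_0,y,s)\in\Gamma_a^r(z,w)$ — the height constraint $y_0<r$ is inherited — whence $N_a^r(u)(z,w)\ge|u(y_0,y,s)|>\lambda$, i.e. $(z,w)\in F_\lambda$. The set of such $(z,w)$ contains a parabolic cube $Q_\rho(y,s)$ with $\rho=ay_0/(1+\sqrt{n-1})$, and since $(x,t)\in Q_{by_0}(y,s)$ the parabolic triangle inequality gives $Q_\rho(y,s)\subset Q_{2by_0}(x,t)=:\widehat Q_{(x,t)}$. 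Using $|Q_\sigma|=2^n\sigma^{n+1}$ we obtain
\[
|\widehat Q_{(x,t)}\cap F_\lambda|\ \ge\ |Q_\rho(y,s)|\ =\ c_*\,|\widehat Q_{(x,t)}|,\qquad c_*:=\Bigl(\tfrac{a}{2b(1+\sqrt{n-1})}\Bigr)^{n+1}>0.
\]
The radii $\widehat r(x,t):=2by_0$ satisfy $\sup_{(x,t)\in E_\lambda}\widehat r(x,t)\le 2br<\infty$, so Lemma~\ref{L:cover} applied to $E_\lambda$ with these radii (on $\partial U\cong\BBR^{n-1}\times\BBR$ with parabolic cubes) produces a disjoint subfamily $\widehat Q_i=Q_{\widehat r_i}(x_i,t_i)$ with $E_\lambda\subset\bigcup_i Q_{3\widehat r_i}(x_i,t_i)$. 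By disjointness of the $\widehat Q_i$ together with $|\widehat Q_i\cap F_\lambda|\ge c_*|\widehat Q_i|$,
\[
|E_\lambda|\ \le\ \sum_i|Q_{3\widehat r_i}(x_i,t_i)|\ =\ 3^{n+1}\sum_i|\widehat Q_i|\ \le\ \frac{3^{n+1}}{c_*}\sum_i|\widehat Q_i\cap F_\lambda|\ \le\ \frac{3^{n+1}}{c_*}\,|F_\lambda|,
\]
which is the desired estimate with $C'=3^{n+1}/c_*$.

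The only step requiring genuine care is the geometric bookkeeping in the second paragraph: converting the two cone apertures into comparable parabolic cubes, and verifying that the truncation height $r$ plays no role — it survives precisely because the constraint $y_0<r$ on the witnessing point is preserved when $(y_0,y,s)$ is re-described as a point of a narrower cone $\Gamma_a^r(z,w)$. The remaining ingredients — the layer-cake identity and the Vitali-type covering supplied by Lemma~\ref{L:cover} (which here plays the role of the weak-$(1,1)$ bound for the parabolic Hardy--Littlewood maximal operator) — are routine. I therefore do not anticipate a real obstacle; this is a standard real-variable argument, adapted only to the parabolic scaling in which the time variable carries weight $2$ relative to the spatial variables.
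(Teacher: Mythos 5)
Your proposal is correct and follows essentially the same route as the paper: the trivial cone inclusion for the pointwise bound, then a distribution-function comparison obtained by projecting the witnessing point of $N_b^r(u)>\lambda$ onto a boundary cube contained in $\{N_a^r(u)>\lambda\}$, combined with the covering Lemma~\ref{L:cover} and the layer-cake formula. Your density formulation with the cubes $Q_{2by_0}(x,t)$ is just a slightly more explicit bookkeeping of the same geometric observations the paper uses.
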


\begin{proof}
First of all, it is trivial to show
\[
N_{a}^{r} (u) \leq N_{b}^{r} (u),
\]
since the cone of smaller aperture $\Gamma_{a}^{r}$ is contained in $\Gamma_{b}^{r}$.

Our goal to show that, for any $\lambda > 0$, there exists a constant $C$ such that
\[
\left| \{ (x, t) \in \partial U : N_{b}^{r} (u) (x,t) > \lambda \} \right|
\leq C \left| \{ (x, t) \in \partial U : N_{a}^{r} (u) (x,t) > \lambda \} \right|.
\]

From this the claim $\|N_{b}^{r}(u)\|_{L^p(\partial U)}\leq C_p \|N_{a}^{r}(u)\|_{L^p(\partial U)}$ follows immediately, since for $\widetilde{E}(\lambda)=\{ (x, t) \in \partial U : N (u) (x,t) > \lambda \}$ we have
$$\int_{\partial U} N(u)(x,t)^p \,dX\,dt = c_p\int_{0}^\infty |\widetilde{E}(\lambda)|\lambda^{p-1}d\lambda,$$
and the estimate above gives us comparison of measures of the sets $\widetilde{E}(\lambda)$ for $N_a^r$ and $N_b^r$, respectively.

We make two simple geometrical observations. First, for any $(z_0,z,\tau) \in \Gamma_{b}^{r}(x,t)$ (that is $|z-x| + |t-\tau|^{1/2} < b z_0$), then $(x,t) \in Q_{b z_0} (z,\tau)$. Second, for $(y,s) \in Q_{ax_0/n}(x,t)$ and $0<x_0<r$ (that is, $|x_i - y_i| < a x_0/n$ for all $i$ and $|s-t|^{1/2} < a x_0/n$), then $(x_0, x, t) \in \Gamma_{a}^{r}(y,s)$.

Assume that
\[
(x, t) \in E(\lambda) = \{(y,s) \in \partial U : N_{b}^{r} (u) (y,s) > \lambda \}.
\]
It follows that, for some $(z_0, z, \tau) \in \Gamma_{b}^{r}(x,t)$, we have $|u(z_0, z, \tau)| > \lambda$. Therefore $(x,t) \in Q_{b z_0}(z,\tau)$ by the first observation. For any $(z', \tau') \in Q_{az_0/n}(z,\tau)$, the second observation implies that $(z_0, z, \tau) \in \Gamma_{a}^{r}(z',\tau')$. Hence $N_{a}^{r}(z', \tau') > \lambda$ and therefore
\[
Q_{az_0/n}(z,\tau) \subset E'(\lambda) = \{(y,s) \in \partial U : N_{a}^{r} (u) (y,s) > \lambda \}.
\]

Define $r(x,t)>0$ to be the smallest positive number such that $Q_{az_0/n}(z,\tau)\subset Q_{r(x,t)}(x,t)$. Due to the geometry of the nontangential cones
for some $K=K(a,b)>0$: $|Q_{r(x,t)}(x,t)|\le K |Q_{az_0/n}(z,\tau)|$.
If $\sup_{(x,t)\in E(\lambda)}r(x,t)=\infty$ there is nothing to prove as this implies that $E(\lambda)$ (and therefore also $\widetilde{E}(\lambda)$) contain balls of arbitrary large radius and hence both $|E(\lambda)|$ and $|\widetilde E(\lambda)|$  are infinite. So the claim holds. Otherwise we can apply Lemma~\ref{L:cover} and there exists a sequence of $\{(x_i, t_i)\} \subset E(\lambda)$ and $\{r_i\}$ such that
\[\begin{split}
|E(\lambda)| &\leq \sum_{i} \left| Q_{3 r_i} (x_i, t_i) \right| \\
             &\leq C \sum_{i} \left| Q_{r_i} (x_i, t_i) \right|\le CK \sum_i \left| Q_{a/nz_{0i}} (z_i, \tau_i) \right|\\
             &\leq CK |E'(\lambda)|,
\end{split}\]
the last inequality due to the fact that the sets $Q_{a/nz_{0i}} (z_i, \tau_i)$ are disjoint as $Q_{r_i} (x_i, t_i)$ are and are contained in $E'(\lambda)$.

For simplicity we have worked on the domain $U$; the upper half-space. However, a similar result holds on any admissible parabolic domain via the localization and the pull-back map $\rho$.
\end{proof}

Next, we state the two interior Cacciopoli estimates for the parabolic equations.
\begin{lemma}\label{L:Caccio}(A Cacciopoli inequality)
Suppose that $u$ is a weak solution\footnote{The weak solution is defined as usual; the equation is multiplied by a $C_0^\infty$ test function and integrated by parts in all variables moving all derivatives onto the test function. For details see  for example \cite[Chapter I, (2.10)-(2.11)]{HL}.} of \eqref{E:u}. For an interior point $(x_0, x, t) \in U$ (which means $x_0 > 0$) and any $0 <r < x_0 /4$ such that $Q_{4r}(X, t) \subset U$, there exists a constant $C$ such that
\begin{equation*}\begin{split}
& r^{n} \left(\sup_{Q_{r}(X, t)} u \right)^{2} \\
&\leq    C \sup_{t-(2r)^2 \leq s \leq t+(2r)^2} \int_{Q_{2r}(X,t)\cap{(\mathbb R}^n\times\{s\})} u^{2}(Y,s) \,dY
       + C\int_{Q_{2r}(X, t)} |\nabla u|^{2} \,dY\,ds \\
&\leq    \frac{C^2}{r^2} \int_{Q_{4r}(X, t)} u^{2}(Y, s) \,dY\,ds.
\end{split}\end{equation*}
\end{lemma}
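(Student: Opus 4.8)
The plan is to establish the two inequalities separately, both by the standard Caccioppoli technique adapted to the parabolic scaling, where the parabolic cube $Q_r(X,t)$ has spatial radius $r$ and time radius $r^2$. First I would prove the lower bound, i.e.\ the estimate of $\sup_{Q_r} u$ by the $L^\infty_s L^2_Y$ norm plus the energy term. Since $u$ is a weak solution of \eqref{E:u} and $(x_0,x,t)$ is an interior point with $Q_{4r}(X,t)\subset U$, on the cylinder $Q_{2r}(X,t)$ the coefficients are bounded and uniformly elliptic, so $u$ is (after the usual mollification/difference-quotient argument) a local weak solution to which interior regularity theory applies. The cleanest route is Moser iteration: one tests the equation with $\eta^2 u^{k}$-type functions (or $u_+^q$ with cutoffs) over a nested family of parabolic subcylinders $Q_{r(1+2^{-j})}$, uses the parabolic Sobolev embedding at each stage, and iterates to pass from an $L^2$ average to the $L^\infty$ bound on $Q_r$. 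This yields
\[
r^{n}\Big(\sup_{Q_r(X,t)}u\Big)^2 \le C\sup_{t-(2r)^2\le s\le t+(2r)^2}\int_{B_{2r}(X)}u^2(Y,s)\,dY + C\int_{Q_{2r}(X,t)}|\nabla u|^2\,dY\,ds,
\]
the two terms on the right being exactly the quantities that appear naturally at the first iteration step (the $\sup$-in-time term from the energy inequality and the gradient term from the spatial Dirichlet form). Care must be taken that all constants depend only on $n,\lambda,\Lambda$ and that the drift term $\boldsymbol B$ contributes harmlessly on this interior scale, which follows from \eqref{E:B1} (i.e.\ $x_0|\boldsymbol B|$ bounded) since $r<x_0/4$ keeps us away from the boundary.

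For the second inequality I would prove the energy (Caccioppoli) estimate proper: test the equation for $u$ against $\varphi^2 u$ where $\varphi$ is a smooth cutoff, $\varphi\equiv 1$ on $Q_{2r}(X,t)$, $\mathrm{supp}\,\varphi\subset Q_{4r}(X,t)$, with $|\nabla\varphi|\lesssim 1/r$ and $|\partial_t\varphi|\lesssim 1/r^2$ (consistent with parabolic scaling). The time-derivative term produces $\tfrac12\partial_t(\varphi^2 u^2) - \varphi\varphi_t u^2$; integrating in time and using the divergence term with ellipticity gives
\[
\sup_s\int \varphi^2 u^2(\cdot,s)\,dY + \lambda\int_{Q_{4r}}\varphi^2|\nabla u|^2 \le C\int_{Q_{4r}}\big(|\nabla\varphi|^2 + |\varphi\varphi_t|\big)u^2 + (\text{drift terms}),
\]
and absorbing the drift contribution (again via \eqref{E:B1} and Young's inequality, with the small-Carleson smallness not even needed, only boundedness of $x_0|\boldsymbol B|$ on the interior cylinder) yields both
\[
\sup_{t-(2r)^2\le s\le t+(2r)^2}\int_{B_{2r}(X)}u^2(Y,s)\,dY \le \frac{C}{r^2}\int_{Q_{4r}(X,t)}u^2\,dY\,ds
\]
and
\[
\int_{Q_{2r}(X,t)}|\nabla u|^2\,dY\,ds \le \frac{C}{r^2}\int_{Q_{4r}(X,t)}u^2\,dY\,ds.
\]
Adding these two gives the claimed bound of the middle expression by $\tfrac{C^2}{r^2}\int_{Q_{4r}}u^2$, completing the chain.

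The main obstacle I anticipate is purely technical rather than conceptual: justifying that one may test the equation with functions involving $u$ itself (and powers of $u$, for Moser iteration) despite $u$ being only a weak solution with a distributional time derivative. The standard remedy is to use Steklov averages $u_h(\cdot,s)=\tfrac1h\int_s^{s+h}u(\cdot,\tau)\,d\tau$ in the time variable, prove the inequalities for $u_h$, and let $h\to0$; this is routine but must be carried out to make the integration-by-parts in $t$ rigorous. A secondary point to check is that the drift term $\boldsymbol B\cdot\nabla u$ really is a lower-order perturbation here — on the interior cylinder $Q_{4r}(X,t)\subset U$ with $r<x_0/4$ we have $x_0\approx\mathrm{dist}$ to $\partial U$, so $|\boldsymbol B|\lesssim 1/x_0\lesssim 1/r$ is not uniformly small, but it enters the estimates with a factor that Young's inequality handles exactly as the $|\nabla\varphi|^2\sim 1/r^2$ terms do, so no smallness is required for this lemma (the smallness of the Carleson norm is only needed later, in the square-function estimates). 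I would also note that everything is stated for scalar $u$ and the maximum principle / De Giorgi–Nash–Moser machinery applies verbatim since we do not assume symmetry of $A$.
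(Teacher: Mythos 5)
Your proof is correct, and it is essentially the intended one: the paper does not prove this lemma at all but simply defers to Hofmann--Lewis \cite{HL}, and the argument there is exactly the standard De Giorgi--Nash--Moser package you describe --- Steklov averaging to make the test functions admissible, the energy inequality with a parabolically scaled cutoff for the second inequality, and Moser iteration for the $L^\infty$ bound. For the first inequality you can in fact shortcut the iteration bookkeeping: local boundedness gives $\sup_{Q_r}u^2\le C r^{-(n+2)}\int_{Q_{2r}}u^2\,dY\,ds$, and bounding the time integral by $8r^2$ times the worst time slice already yields $r^n\bigl(\sup_{Q_r}u\bigr)^2\le C\sup_s\int_{B_{2r}(X)}u^2(Y,s)\,dY$, so the gradient term on the right is not even needed there; your observation that the sup-in-time and Dirichlet terms are exactly the parabolic Sobolev ``energy norm'' appearing at each iteration step is the reason the middle expression is the natural intermediate quantity.

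One point in your treatment of the drift needs a correction. On $Q_{4r}(X,t)$ the bound \eqref{E:B1} gives $|\boldsymbol{B}(y_0,y,s)|\le \Lambda_B/y_0$ with $y_0$ ranging down to $x_0-4r$, so the correct pointwise bound is $\Lambda_B/(x_0-4r)$, not $\Lambda_B/x_0$ as you wrote; under the stated hypothesis $r<x_0/4$ this is $\lesssim 1/r$ only with a constant that degenerates as $4r\uparrow x_0$, and this matters precisely for the second (energy) inequality, where the cutoff is supported in all of $Q_{4r}$. Either the constant must be allowed to depend on the ratio $(x_0-4r)/r$, or one should work with, say, $r\le x_0/8$, whence $y_0\ge 4r$ on $Q_{4r}$ and $|\boldsymbol{B}|\le\Lambda_B/(4r)$ uniformly (note that on $Q_{2r}$, where the Moser iteration for the first inequality takes place, one automatically has $y_0\ge x_0-2r\ge 2r$, so that step is unaffected). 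This imprecision is inherited from the statement itself --- compare Lemma \ref{L:Caccio2}, where the needed bound $|\boldsymbol{B}|\le K/r$ is made an explicit hypothesis --- and it is harmless in every application in the paper, where the lemma is invoked only on Whitney-type cubes whose side length is comparable to the distance to the boundary. Everything else (Young's inequality absorbing the drift into the Dirichlet form, no smallness of the Carleson norm being required) is as you say.
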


The result is proven in \cite{HL} so we omit the proof. A similar claim holds for the second gradient if an additional assumption is placed on the coefficients.

\begin{lemma}\label{L:Caccio2}(A Cacciopoli inequality for the second gradient)
Suppose that $u$ is a weak solution of \eqref{E:u}. For an interior point $(x_0, x, t) \in U$ (which means $x_0 > 0$) and any $0 <r < x_0 /2$ such that $Q_{2r}(X, t) \subset U$,
assume that $|\nabla A|,|\boldsymbol{B}|\le K/r$ on $Q_{2r}(X, t)$. Then there exists a constant $C=C(K)$ such that
\begin{equation}\label{E:GC}\begin{split}
&\int_{Q_{r}(X, t)} |\nabla^2 u|^{2} \,dY\,ds\leq    \frac{C}{r^2} \int_{Q_{2r}(X, t)} |\nabla u|^{2} \,dY\,ds.
\end{split}\end{equation}
\end{lemma}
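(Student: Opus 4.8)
The plan is to prove \eqref{E:GC} by the classical interior $H^2$-regularity argument: differentiate the equation in a spatial direction and run an energy estimate, using difference quotients to make the differentiation rigorous under the weak hypotheses on the coefficients. Fix a spatial index $k\in\{0,1,\dots,n-1\}$, let $e_k$ be the corresponding coordinate vector, and for $0<|h|<r/2$ set $w_h(Y,s):=h^{-1}\big(u(Y+he_k,s)-u(Y,s)\big)=\Delta_k^h u$. It suffices to prove that
\[
\int_{Q_r(X,t)}|\nabla w_h|^2\,dY\,ds\le \frac{C(K)^2}{r^2}\int_{Q_{2r}(X,t)}|\nabla u|^2\,dY\,ds
\]
uniformly in $h$; summing over the $n$ spatial indices $k$ and invoking the standard difference-quotient characterisation of $W^{1,2}$ (bounded difference quotients yield the weak derivative with the same bound) then gives \eqref{E:GC}, since $Q_{2r}(X,t)\subset U$ keeps us in the interior where $u\in W^{1,2}_{\mathrm{loc}}$ and Lemma~\ref{L:Caccio} applies.

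First I would record the equation satisfied by $w_h$. Applying $\Delta_k^h$ to the weak formulation of \eqref{E:u} — equivalently, replacing the test function $\eta$ by $\Delta_k^{-h}\eta$ and using discrete integration by parts $\int f\,\Delta_k^{-h}\eta=-\int(\Delta_k^h f)\,\eta$ — shows that, weakly on $Q_{2r-|h|}(X,t)$,
\[
\partial_t w_h=\di\!\big(A(\cdot+he_k)\nabla w_h\big)+\di\!\big((\Delta_k^h A)\nabla u\big)+\Delta_k^h(\boldsymbol{B}\cdot\nabla u).
\]
The shifted matrix $A(\cdot+he_k)$ obeys the same ellipticity and boundedness constants as in \eqref{E:UEB}. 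The second right-hand term is a divergence-form source with $|(\Delta_k^h A)\nabla u|\le(\sup_{Q_{2r}}|\nabla A|)\,|\nabla u|\le\frac Kr|\nabla u|$. The delicate term is the drift contribution $\Delta_k^h(\boldsymbol{B}\cdot\nabla u)$: here we control only $|\boldsymbol{B}|\le K/r$ and have \emph{no} bound on $\nabla\boldsymbol{B}$, so one must not expand it by the product rule. Instead, when this term is paired against a test function $\eta$ one writes $\int\Delta_k^h(\boldsymbol{B}\cdot\nabla u)\,\eta=-\int(\boldsymbol{B}\cdot\nabla u)\,\Delta_k^{-h}\eta$ and uses the elementary estimate $\|\Delta_k^{-h}\eta\|_{L^2(Q_{3r/2})}\le\|\partial_k\eta\|_{L^2(Q_{2r})}$; since $|\boldsymbol{B}\cdot\nabla u|\le\frac Kr|\nabla u|$, this term is controlled in exactly the same way as the divergence-form source $\di\!\big(\frac Kr|\nabla u|\,e_k\big)$. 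This is the one place where the structural hypothesis $|\boldsymbol{B}|\le K/r$ is genuinely used, and I expect it to be the main point of the argument; once it is observed, $w_h$ solves a uniformly parabolic equation with divergence-form forcing of size $\le\frac{C(K)}{r}|\nabla u|$.

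Then I would run the usual Caccioppoli estimate for $w_h$. Pick $\phi\in C_0^\infty(Q_{3r/2}(X,t))$ with $\phi\equiv1$ on $Q_r(X,t)$, $|\nabla\phi|\le C/r$, $|\partial_t\phi|\le C/r^2$ (respecting the parabolic scaling of $Q_r$), and test the equation for $w_h$ with $\phi^2 w_h$. The time term yields $\tfrac12\sup_s\int\phi^2 w_h^2$, which we discard, plus $\tfrac12\int|w_h|^2|\partial_t\phi^2|$; the principal term yields $\ge\lambda\int\phi^2|\nabla w_h|^2$; the cross term from $\nabla(\phi^2 w_h)$, the two divergence-form sources, and the $\partial_t\phi$ term are all handled by Cauchy–Schwarz and Young's inequality, absorbing $\varepsilon\int\phi^2|\nabla w_h|^2$ on the left. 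This gives
\[
\int_{Q_r}|\nabla w_h|^2\le C\Big(\frac1{r^2}\int_{Q_{3r/2}}|w_h|^2+\frac{K^2}{r^2}\int_{Q_{3r/2}}|\nabla u|^2\Big).
\]
Finally, for $|h|<r/2$ one has $\|w_h\|_{L^2(Q_{3r/2})}=\|\Delta_k^h u\|_{L^2(Q_{3r/2})}\le\|\partial_k u\|_{L^2(Q_{2r})}$, so the right-hand side is bounded by $\frac{C(K)^2}{r^2}\int_{Q_{2r}}|\nabla u|^2$ uniformly in $h$; letting $h\to0$ and summing over the spatial indices $k$ proves \eqref{E:GC} with $C=C(K)$. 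Apart from the treatment of the drift noted above, this is the textbook interior second-derivative estimate, now carried out with parabolic cubes in place of Euclidean balls.
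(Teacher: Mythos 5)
Your proposal is correct and follows essentially the same route as the paper: a Caccioppoli-type energy estimate for the spatially differentiated equation, testing with (a cutoff times) the differentiated solution, controlling the $\nabla A$ and $\boldsymbol{B}$ contributions via the pointwise bound $K/r$, and crucially handling the drift term by moving the extra derivative onto the test function rather than differentiating $\boldsymbol{B}$. The only difference is that you implement the differentiation rigorously via difference quotients, whereas the paper performs it formally with $v=\nabla u$; this is a refinement, not a different argument.
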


\begin{proof} Because $A$ is differentiable, without loss of generality we may assume that $u$ solves equation
of the form  \eqref{E:u} with matrix $A$ symmetric, i.e., $A=A^T$. Indeed we have
$$\partial_{x_i}(A_{ij}\partial_{x_j} u)=\partial_{x_j}(A_{ij}\partial_{x_i}u) +(\partial_{x_i} A)\partial_{x_j}u-(\partial_{x_j}A)\partial_{x_i} u$$
and hence the matrix $A$ can be symmetrized at the expense of a first order (drift) term.

We take the spatial gradient of the PDE \eqref{E:u}. For simplicity, let $v_i = \partial_{x_i} u$ and $w_i=v_i\zeta^2$, $0\le i\le n-1$ where $0\leq \zeta \leq 1$ is a smooth cutoff function equal to $1$ on $Q_{r}(X,t)$ and supported in $Q_{2r}(X,t)$ satisfying $r|\nabla \zeta| + r^2 |\zeta_{t}| \leq c$ for some $c>0$. It follows that (summing over repeating indices)
\[
\int_{Q_{2r}} (v_{i})_t w_i \,dX\,dt = -\int_{Q_{2r}} \left( A \nabla v_i + (\partial_{x_i} A) \nabla u \right)\cdot\nabla w_i +\boldsymbol{B}\cdot(w_i\nabla v_i-(\partial_{x_i}w_i)\nabla u)\,dX\,dt,
\]
which implies that (due to the symmetry of $A$ some terms do not appear below):
\[\begin{split}
&\frac{1}{2}\int_{Q_{2r}} \left[(|\nabla u|\zeta)^2\right]_{t} \,dX\,dt
+ \int_{Q_{2r}} A \nabla(v_i\zeta)\cdot \nabla(v_i\zeta)\,dX\,dt \\
&=  \int_{Q_{2r}} |\nabla u|^{2}\zeta\zeta_{t} \,dX\,dt  + \int_{Q_{2r}} |\nabla u|^2 A\nabla\zeta\cdot\nabla \zeta \,dX\,dt \\
&\quad - \int_{Q_{2r}}  (\partial_{x_i} A)(\nabla u)\zeta\cdot \nabla (v_i\zeta)\,dX\,dt  - \int_{Q_{2r}} (\partial_{x_i} A)v_i\nabla u\cdot \zeta\nabla\zeta \,dX\,dt\\
&\quad +  \int_{Q_{2r}} \boldsymbol B\cdot (v_i\zeta)\nabla(v_i\zeta)\,dX\,dt -  \int_{Q_{2r}}|\nabla u|^2\boldsymbol B\cdot \zeta \nabla \zeta\,dX\,dt\\
&\quad -  \int_{Q_{2r}} \boldsymbol B\cdot(\nabla u)\zeta\partial_{x_i}(v_i\zeta))\,dX\,dt +\int_{Q_{2r}} \boldsymbol B\cdot v_i\nabla u (\zeta\partial_{x_i}\zeta)\,dX\,dt.
\end{split}\]

Using the ellipticity and boundedness of the coefficients and the Cauchy-Schwarz inequality, it follows that
\[\begin{split}
& \sup_{t-(2r)^2 \leq s \leq t+{2r}^2} \int_{Q_{r}(X,t)\cap{(\mathbb R}^n\times\{s\})} |\nabla u|^{2}(X,s) \,dX
+ \lambda \int_{Q_{r}} \left|\nabla^2 u\right|^{2}\,dX\,dt \\
&\leq \frac{2c}{r^2} (1+\Lambda) \int_{Q_{2r}} |\nabla u|^{2} \,dX\,dt
 + \frac{C'}{\lambda} \int_{Q_{2r}} \left(|\nabla A|^{2} + |\boldsymbol{B}|^{2}\right) |\nabla u|^{2}\,dX\,dt \\
& + \frac{C'}{\lambda r} \int_{Q_{2r}} \left(|\nabla A| + |\boldsymbol{B}|\right) |\nabla u|^2 \,dX\,dt \\
&\leq \frac{C}{r^2} \int_{Q_{2r}} |\nabla u|^2 \,dX\,dt
\end{split}\]
for some constant $C= C(\lambda, \Lambda, c, K)$. Then \eqref{E:GC} follows by dropping the first term on the left hand side.
\end{proof}

We will need the Poincar\'e inequality for functions vanishing at the boundary:

\begin{lemma}\label{L:PIg} Let $\Omega\subset {\BBR}^n$ be a bounded domain. There exists $c_n>0$ depending only on the dimension such that if
$u \in W^{1,2}_{0}(\Omega)$ and diam $(\Omega)=\sup_{x,y\in\Omega}|x-y|= R$  then
\[
\int_{\Omega} u^2 \,dX \leq c_nR^2 \int_{\Omega} |\nabla u|^{2} \,dX .
\]
\end{lemma}
To see the lemma above recall that the first Dirichlet eigenvalue of Laplacian on domain $\Omega$ is the infimum
$$\lambda_{\Omega}=\inf_{u\in W^{1,2}_0(\Omega)}\frac{\|\nabla u\|^2_2}{\|u\|^2_2}.$$
It immediately follows (by extension) that if $\Omega\subset\Omega'$ then $\lambda_{\Omega}\ge\lambda_{\Omega'}$. The optimal constant in the Poincar\'e inequality is $\lambda_{\Omega}^{-1}$. Hence if
$\Omega\subset B_R$ (a ball of radius $R$) then 
$$\lambda_{\Omega}^{-1}\le \lambda^{-1}_{B_R}= c_nR^2,$$
from which the result follows.

Lemmas 3.4 and 3.5 in \cite{HL} give us the following estimates for a weak solution of \eqref{E:u}.

\begin{lemma}\label{Holder}(Interior H\"{o}lder continuity)
Suppose that $u$ is a weak solution of \eqref{E:u} in U. If $|u| \leq K < \infty$ for some constant $K>0$ in $Q_{4r}(x_0, x, t) \subset U$, then for any $(y_0, y, s), (z_0, z, \tau) \in Q_{2r}(x_0, x, t)$ there exists a constant $C>0$ and $0 < \alpha < 1$ such that
\[
\left|u(y_0, y, s) - u(z_0, z, \tau)\right| \leq C K \left( \frac{|y_0 - z_0| + |y - z| + |s - \tau|^{1/2}}{r} \right)^{\alpha}.
\]
\end{lemma}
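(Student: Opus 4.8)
The plan is to prove this as an instance of the classical parabolic De Giorgi--Nash--Moser theory; indeed, up to the lower-order term it is the interior regularity result of Nash and of Ladyzhenskaya--Solonnikov--Uraltseva, and it is precisely Lemmas 3.4--3.5 of \cite{HL}. First I would normalize by the parabolic rescaling: set $\tilde u(Y,s)=K^{-1}u(X+rY,\,t+r^2s)$, so that $\tilde u$ is a weak solution on $Q_4(0)$ of an equation of the same structural type with the \emph{same} ellipticity constants $\lambda,\Lambda$ and rescaled drift $\tilde{\boldsymbol B}(Y,s)=r\,\boldsymbol B(X+rY,t+r^2s)$, with $|\tilde u|\le1$ on $Q_4(0)$ and $\|\tilde{\boldsymbol B}\|_{L^\infty(Q_4(0))}=r\|\boldsymbol B\|_{L^\infty(Q_{4r}(X,t))}\lesssim\Lambda_B$ by \eqref{E:B1} (since $(X,t)$ is an interior point, the relevant heights $y_0$ are comparable to $r$). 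It then suffices to produce constants $\theta\in(0,1)$ and $\sigma\in(0,1)$, depending only on $n,\lambda,\Lambda,\Lambda_B$, with
\[
\essosc_{Q_{\sigma\rho}^{-}(P)}\tilde u \;\le\; \theta\,\essosc_{Q_{\rho}^{-}(P)}\tilde u
\]
for every backward parabolic cylinder $Q_{\rho}^{-}(P)=Q_{\rho}(P)\cap\{s\le s_P\}\subset Q_4(0)$; iterating this oscillation-decay estimate produces the modulus of continuity.

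The oscillation-decay inequality comes from the two standard De Giorgi ingredients. \emph{(a) Local boundedness.} Testing the equation with $(\tilde u-k)^{+}\zeta^{2}$ (for a parabolic cutoff $\zeta$ and levels $k$), one gets the Caccioppoli inequality for the truncations $(\tilde u-k)^{+}$ --- this is the truncated analogue of Lemma~\ref{L:Caccio}, proved the same way. The drift contribution $\int \tilde{\boldsymbol B}\cdot\nabla(\tilde u-k)^{+}\,(\tilde u-k)^{+}\zeta^{2}$ is absorbed by Cauchy--Schwarz, the leftover zeroth-order term being $\lesssim\Lambda_B^{2}\int((\tilde u-k)^{+}\zeta)^{2}$, which is harmless on cylinders of size $\le1$. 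Combining this energy inequality with the parabolic Sobolev embedding and iterating over the levels $k_j=\tfrac12 M(1-2^{-j})$ on shrinking cylinders yields an $L^{2}\!\to\!L^{\infty}$ (De Giorgi) bound for $(\tilde u-k)^{+}$, which in particular shows: if $\tilde u$ lies below a level $M$ on a small-measure set only, its sup on a smaller cylinder drops below $M$.

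\emph{(b) Density (logarithmic) lemma.} Using the parabolic De Giorgi isoperimetric estimate --- equivalently the logarithmic estimate obtained by testing with $\big(\log\tfrac{H}{H-(\tilde u-k)^{+}}\big)$-type quantities --- one shows that if $\tilde u\le M$ on $Q_\rho^{-}$ and $|\{\tilde u\le M-\tfrac12(M-m)\}\cap Q_\rho^{-}|\ge\tfrac12|Q_\rho^{-}|$, then as $j\to\infty$ the measure of $\{\tilde u>M-2^{-j}(M-m)\}$ inside a fixed subcylinder becomes arbitrarily small. Feeding this into (a) (applied at a high enough level $j$) forces $\sup\tilde u<M-\delta(M-m)$ on a smaller cylinder; running the same argument for $-\tilde u$ controls the infimum, and the two estimates combine to give the oscillation ratio $\theta<1$. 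Iterating yields $\essosc_{Q^{-}_{\sigma^{j}}(P)}\tilde u\le C\theta^{j}$, hence $\essosc_{Q^{-}_{\rho}(P)}\tilde u\le C\rho^{\alpha}$ with $\alpha=\log\theta/\log\sigma\in(0,1)$; finally any two points of $Q_2(0)$ sit in a common backward cylinder of radius comparable to $d:=|Y_0-Z_0|+|Y-Z|+|s-\tau|^{1/2}$ inside $Q_4(0)$ (the case $d\gtrsim1$ being trivial from $|\tilde u|\le1$), so $|\tilde u(Y,s)-\tilde u(Z,\tau)|\le Cd^{\alpha}$; undoing the rescaling gives the stated bound with the factor $K\big(r^{-1}(|y_0-z_0|+|y-z|+|s-\tau|^{1/2})\big)^{\alpha}$ for the continuous representative of $u$.

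The main obstacle, as always in De Giorgi--Nash--Moser, is ingredient (b): the parabolic logarithmic/isoperimetric estimate is markedly more delicate than its elliptic counterpart because the good cylinders are one-sided in time, so one must track how the ``bad'' superlevel set propagates forward in time while keeping all constants independent of the lower-order term. The drift $\boldsymbol B$ itself is only a nuisance, not an essential difficulty: by \eqref{E:B1} it behaves like a bounded coefficient after parabolic rescaling and can be absorbed uniformly in every energy inequality. (Alternatively, one may derive the oscillation decay from Moser's parabolic Harnack inequality for nonnegative solutions applied to $\sup_{Q_\rho}\tilde u-\tilde u$ and $\tilde u-\inf_{Q_\rho}\tilde u$, or simply invoke Lemmas 3.4--3.5 of \cite{HL} and the classical references.)
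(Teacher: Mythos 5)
Your argument is correct in substance, but note that the paper does not actually prove this lemma: it is quoted directly from Lemmas 3.4 and 3.5 of \cite{HL} (interior H\"older continuity and the Harnack inequality for parabolic operators with drift controlled as in \eqref{E:B1}), so your De Giorgi--Nash--Moser sketch is a reconstruction of the classical background result rather than an alternative to an argument given in the text. The ingredients you list are the right ones (parabolic rescaling; Caccioppoli for truncations plus De Giorgi iteration for local boundedness, in the spirit of Lemma~\ref{L:Caccio}; the one-sided density/logarithmic lemma; oscillation decay on backward cylinders; and the final step placing two points of $Q_{2r}$ in a common backward cylinder of radius comparable to the parabolic distance), and the drift is indeed only a bounded nuisance after rescaling. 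One small imprecision to repair: $Q_{4r}(x_0,x,t)\subset U$ only forces $x_0\ge 4r$, so on the full cube $Q_{4r}$ the height $y_0$ may be arbitrarily close to $0$ and \eqref{E:B1} does \emph{not} give $r\|\boldsymbol{B}\|_{L^\infty(Q_{4r})}\lesssim\Lambda_B$; your claim that the rescaled drift is bounded by $\Lambda_B$ on all of $Q_4(0)$ can fail near the bottom face. Since the conclusion only concerns points of $Q_{2r}$, it suffices to run the oscillation-decay argument on cylinders contained in, say, $Q_{3r}$, where $y_0\ge x_0-3r\ge r$ and hence $r|\boldsymbol{B}|\le\Lambda_B$, which is all your iteration uses. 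With that adjustment the proposal is a complete (classical) proof, with constants depending only on $n,\lambda,\Lambda,\Lambda_B$, consistent with the cited result.
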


\begin{lemma}\label{Harnack}(Harnack inequality)
Suppose that $u$ is a weak nonnegative solution of \eqref{E:u} in $U$ such that $Q_{4r}(X, t) \subset U$. Suppose that $(Y,s), (Z, \tau) \in Q_{2r}(X,t)$. There exists an a priori constant $c$ such that, for $\tau < s$,
\[
u(Z, \tau) \leq u(Y, s) \exp \left[ c\left( \frac{|Y-Z|^2}{|s-\tau|} + 1 \right) \right].
\]
If $u\geq 0$ is a weak solution of the adjoint operator of \eqref{E:u}, then this inequality is valid when $\tau > s$.
\end{lemma}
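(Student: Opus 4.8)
\emph{Proof plan.} This is Moser's parabolic Harnack inequality for the divergence-form operator $\partial_t-\di(A\nabla\cdot)-\boldsymbol{B}\cdot\nabla$ with bounded measurable uniformly elliptic $A$ and locally bounded drift $\boldsymbol{B}$, and I would prove it by the classical Moser iteration scheme, paying attention to the dependence on the geometry so as to recover the factor $\exp(c(|Y-Z|^2/|s-\tau|+1))$. First one normalizes by parabolic rescaling: putting $\tilde u(y_0,y,s)=u(x_0+ry_0,x+ry,t+r^2s)$ reduces to the case $r=1$, $Q_4(0,0)\subset U$, with $A$ replaced by an equi-elliptic matrix and $\boldsymbol{B}$ by $r\boldsymbol{B}$; since in our setting $x_0|\boldsymbol{B}|\le\Lambda_B$ while the cubes in play have size comparable to their distance to $\partial U$, the rescaled drift satisfies $\|r\boldsymbol{B}\|_{L^\infty(Q_4)}\le M=M(\Lambda_B)$, so all constants below depend only on $\lambda,\Lambda,n,M$.

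\emph{Step 1 ($L^p\to L^\infty$ estimates).} For a nonnegative weak subsolution $w$ and any $p>1$, testing \eqref{E:u} against $w^{p-1}\zeta^2$ with parabolic cutoffs $\zeta$, using ellipticity and absorbing the drift and cutoff contributions by Young's inequality, yields the Caccioppoli-type bound for $w^{p/2}$ already recorded in Lemma~\ref{L:Caccio}; combined with the parabolic Sobolev inequality this gives a self-improving reverse-H\"older inequality, and iterating over a nested family of parabolic subcubes produces $\sup_{Q_\rho}w\le C(\sigma-\rho)^{-(n+2)/2}\|w\|_{L^2(Q_\sigma)}$, which a further Li--Schoen type interpolation upgrades to $\sup_{Q_\rho}w\le C_p(\sigma-\rho)^{-N_p}\|w\|_{L^p(Q_\sigma)}$ for every $p>0$. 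Running the same computation for a positive supersolution $u$ with exponents $p<1$ (including negative ones) bounds $\sup u$ on a backward-in-time subcube by $\|u\|_{L^{p_0}}$, and bounds $\|u^{-1}\|_{L^{p_0}}$ on a forward-in-time subcube by $(\inf u)^{-1}$, up to constants, for some small $p_0>0$.

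\emph{Step 2 (logarithm estimate and crossover).} Testing the equation for $u>0$ against $u^{-1}\zeta^2$ shows $v:=\log u$ satisfies $\int|\nabla v|^2\zeta^2\lesssim\int|\nabla\zeta|^2+\int|\zeta_t|$, which together with the time term places $v$, after subtracting a time-dependent constant $a$, in a one-sided parabolic John--Nirenberg class: $|\{v<a-\nu\}\cap Q^+|\le Ce^{-\beta\nu}$ and $|\{v>a+\nu\}\cap Q^-|\le Ce^{-\beta\nu}$ for a forward cube $Q^+$ and a backward cube $Q^-$. Inserting this into the Bombieri--Giusti lemma (equivalently, Moser's original crossover argument) bridges the positive- and negative-exponent estimates of Step~1 and gives the elementary Harnack inequality $\sup_{Q^-}u\le C\inf_{Q^+}u$ for fixed comparable parabolic cubes with $Q^-$ at strictly earlier times than $Q^+$; this time ordering is exactly the source of the hypothesis $\tau<s$, and for the adjoint operator the time direction reverses, giving $\tau>s$.

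\emph{Step 3 (chaining).} Given $(Y,s),(Z,\tau)\in Q_{2r}(X,t)$ with $\tau<s$, join $(Z,\tau)$ to $(Y,s)$ by a Harnack chain of $N$ overlapping parabolic cubes, each advancing in time by $\sim(s-\tau)/N$ and in space by $\sim|Y-Z|/N$, with $N\approx 1+|Y-Z|^2/|s-\tau|$ chosen so that consecutive cubes satisfy the hypotheses of Step~2; composing the $N$ inequalities gives $u(Z,\tau)\le C^N u(Y,s)=u(Y,s)\exp(N\log C)$, the asserted bound with $c=c(\lambda,\Lambda,n,M)$. The main obstacle is Step~2, namely proving the one-sided exponential distributional bound for $\log u$ and executing the crossover: this is the genuinely parabolic part of the proof, since there is no $L^2$ identity as in the elliptic setting and the estimate is intrinsically asymmetric in time; Steps~1 and 3 are routine given boundedness, measurability, uniform ellipticity and the scale-invariant drift control $r|\boldsymbol{B}|\lesssim1$.
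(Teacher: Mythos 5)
The paper does not prove this lemma at all: it is imported verbatim from Hofmann--Lewis (Lemmas 3.4 and 3.5 of \cite{HL}), so there is no internal argument to compare against. Your plan is the standard Moser-iteration proof (power estimates for sub/supersolutions, the $\log u$ energy estimate plus parabolic John--Nirenberg/Bombieri--Giusti crossover, then parabolic chaining with $N\approx 1+|Y-Z|^2/|s-\tau|$ to produce the factor $\exp\bigl[c(|Y-Z|^2/|s-\tau|+1)\bigr]$), and that is indeed the classical route by which the cited result is obtained; the time asymmetry and the reversal for the adjoint operator are also correctly identified. Two small points deserve care if you were to execute it. First, $Q_{4r}(X,t)\subset U$ only gives $x_0\ge 4r$, so on the full cube the lower face can have $y_0$ arbitrarily close to $0$ and the scale-invariant bound $x_0|\boldsymbol{B}|\le\Lambda_B$ does not yield $r|\boldsymbol{B}|\lesssim 1$ there; you should run the iteration inside, say, $Q_{3r}(X,t)$, where $y_0\ge r$ and hence $r|\boldsymbol{B}|\le\Lambda_B$, which suffices since the conclusion only concerns points of $Q_{2r}$. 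Second, in the chaining step you must also cap the spatial scale of the chain cubes (e.g.\ by a fixed fraction of $r$), since with $N=O(1)$ the natural scale $\sqrt{(s-\tau)/N}$ can be comparable to $2r$ and the enlarged cubes needed for the elementary Harnack step would leave $Q_{4r}$; this only multiplies $N$ by a bounded factor and is harmless. Finally, be aware that Step 2 (the one-sided exponential distribution estimate for $\log u$ and the crossover) is the genuinely substantial part and is only named, not carried out, in your plan; as a blueprint, though, the proposal is correct and matches the literature the paper relies on.
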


We state a version of the maximum principle, that is a modification of Lemma 3.38 from \cite{HL}.
\begin{lemma}\label{L:MP}(Maximum Principle) Let $u$, $v$ be bounded continuous local weak solutions to \eqref{E:u} in $\Omega$ where $\Omega$ is an admissible parabolic domain and $A$ and $B$ satisfy \eqref{E:UEB}, \eqref{E:B1}, and \eqref{E:A1}. If $|u|,|v|\to 0$ uniformly as $t\to-\infty$ and
\[
\limsup_{(Y,s)\to (X,t)} (u-v)(Y,s) \leq 0
\]
for all  $(X,t)\in\partial\Omega$, then $u\leq v$ in $\Omega$.
\end{lemma}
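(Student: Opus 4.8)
The plan is to deduce Lemma~\ref{L:MP} from the classical parabolic weak maximum principle, applied not on all of $\Omega$ (where $\boldsymbol B$ is only locally bounded and the domain is unbounded in time) but on regions obtained by truncating $\Omega$ away from $\partial\Omega$ and at finite times, on which $\boldsymbol B$ becomes genuinely bounded. By linearity of the equation, $w:=u-v$ is again a bounded continuous local weak solution of $w_t=\di(A\nabla w)+\boldsymbol B\cdot\nabla w$ in $\Omega$; moreover $|w|\le|u|+|v|\to0$ uniformly as $t\to-\infty$, and $\limsup_{(Y,s)\to(X,t)}w(Y,s)\le0$ for every $(X,t)\in\partial\Omega$. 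It therefore suffices to prove that such a $w$ satisfies $w\le0$ in $\Omega$, and the estimates \eqref{E:UEB}, \eqref{E:B1}, \eqref{E:A1} will be used only to guarantee that $A$ is uniformly elliptic and that $\boldsymbol B$, $\nabla A$, $A_t$ are bounded on any subregion of $\Omega$ that stays at positive distance from $\partial\Omega$ (recall $\delta\approx\ell$ with $\ell\in C(\overline\Omega)\cap C^\infty(\Omega)$ from Section~\ref{S:alld}).

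The first step is a covering argument turning the qualitative hypothesis at $\partial\Omega$ into a quantitative one. Fix $\eta>0$ and $T\in\BBR$. By the uniform decay there is $S_0=S_0(\eta)$ with $w<\eta$ on $\Omega\cap\{t\le S_0\}$. The set $\partial\Omega\cap\{S_0-1\le t\le T+1\}$ is compact, because by the Remark after Definition~\ref{D:domain} its time-slices are uniformly bounded Lipschitz domains; for each of its points $(X_0,t_0)$ the hypothesis $\limsup_{(Y,s)\to(X_0,t_0)}w\le0<\eta$ furnishes a parabolic ball $B_{\rho_0}(X_0,t_0)$ on which $w<\eta$ in $\Omega$. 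Extracting a finite subcover of half-radius balls and using $\ell\approx\delta(\cdot)=\dist(\cdot,\partial\Omega)$ one finds $\epsilon_1=\epsilon_1(\eta,T)>0$ such that $w<\eta$ on $\{(X,t)\in\Omega:\ \ell(X,t)\le2\epsilon_1,\ t\le T\}$; combining with the decay, $w<\eta$ on $\bigl(\{\ell\le2\epsilon_1\}\cup\{t\le S_0\}\bigr)\cap\{t\le T\}$. Consequently the set $\{(X,t)\in\Omega:\ w(X,t)\ge\eta,\ t\le T\}$ is contained in $\{\ell>2\epsilon_1\}\cap\{S_0<t\le T\}$, whose closure is a \emph{compact} subset of $\Omega$ (it is bounded, closed in $\BBR^{n+1}$, and disjoint from $\partial\Omega$ since $\ell=0$ there).

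The second step is an energy estimate localized to $\{w>\eta\}$. Put $w_\eta:=(w-\eta)_+$, which by the previous step is supported, within $\{t\le T\}$, in a compact subset of $\Omega$ where $\boldsymbol B,\nabla A,A_t$ are bounded; hence $w_\eta$ is an admissible compactly supported $W^{1,2}$ test function there and the boundary term on $\partial\Omega$ disappears. For $S_0<\tau<T$, testing the equation against $w_\eta\mathbf 1_{\{t<\tau\}}$ (via Steklov averaging, using $\nabla w\cdot\nabla w_\eta=|\nabla w_\eta|^2$ a.e. and the algebraic cancellation of the $\eta$-terms) yields the identity $\tfrac12\int_{\Omega_\tau}w_\eta^2\,dX+\int_{\{t<\tau\}}A\nabla w_\eta\cdot\nabla w_\eta=\int_{\{t<\tau\}}\boldsymbol B\cdot\nabla w_\eta\,w_\eta$; by ellipticity, Cauchy--Schwarz, and absorbing the drift term, $g(\tau):=\int_{\Omega_\tau}w_\eta^2\,dX$ satisfies $g(\tau)\le C(\lambda,\|\boldsymbol B\|_{L^\infty(\operatorname{supp}w_\eta)})\int_{S_0}^\tau g(s)\,ds$ with $g\equiv0$ for $\tau\le S_0$. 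Gronwall's inequality forces $g\equiv0$, i.e.\ $w\le\eta$ on $\Omega\cap\{t<T\}$. Letting first $\eta\to0$ and then $T\to\infty$ gives $w\le0$ in $\Omega$, which is the claim. (Alternatively, the same conclusion on $\Omega\cap\{t<T\}$ follows from the classical weak maximum principle applied on the bounded region $\{\ell>\epsilon_1\}\cap\{S<t<T\}$, whose parabolic boundary lies in $\{t=S\}\cup\{\ell=\epsilon_1,\ S\le t\le T\}$, followed by $S\to-\infty$.)

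The main obstacle is exactly the singularity of the drift: $\boldsymbol B$ is merely locally bounded with $\delta|\boldsymbol B|$ bounded, so there is no global maximum principle to quote and a direct energy argument on all of $\Omega$ would need a Hardy inequality with a constant tied to $\Lambda_B$. Localizing to $\{w>\eta\}$ (equivalently, working on $\{\ell>\epsilon_1\}$) removes this difficulty since there $\boldsymbol B$ is bounded, and the only remaining care is to send $\epsilon_1\to0$ and $S\to-\infty$ in the right order — which is possible precisely because of the uniform decay at $t=-\infty$ and the quantitative boundary bound produced by the covering argument — together with the routine verification that $\overline{\{w\ge\eta,\,t\le T\}}$ is compactly contained in $\Omega$ so that ``local weak solution'' suffices.
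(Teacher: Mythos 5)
Your proof is correct and follows essentially the same route as the paper's: use the uniform decay as $t\to-\infty$ together with the boundary hypothesis and a compactness argument to confine $\{u-v\ge\eta,\ t\le T\}$ to a compact subset of $\Omega$ on which the coefficients are bounded, apply a weak maximum principle there, and then let $\eta\to0$, $T\to\infty$. The only difference is one of detail: your covering argument and the Gronwall/energy estimate with $(w-\eta)_+$ make explicit what the paper compresses into the assertion that a compact set $K$ exists and that the classical weak maximum principle applies on it (your parenthetical alternative via the region $\{\ell>\epsilon_1\}\cap\{S<t<T\}$ is precisely the paper's argument).
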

\begin{proof} The argument is essentially the same as in Lemma 3.38 from \cite{HL}. Due to continuity of the solutions and the assumption that $|u|,|v|\to 0$ uniformly as $t\to-\infty$ for any $\epsilon>0$ and $T<\infty$ there exists a compact set $K$ such that $u-v\le \epsilon$ for all $(X,t)\in\Omega\setminus K$
with $t\le T$. On $K$ coefficients $A$, $B$ are essentially bounded by \eqref{E:B1} and \eqref{E:A1} hence the weak maximum principle holds on $K$. Using it we obtain $u-v\le \epsilon$ on $K$. It follow that $(u-v)(X,t)\le \epsilon$ for all $(X,t)\in\Omega$ such that $t\le T$. As $T$ can be chosen arbitrary large, it follows
that $(u-v)\le \epsilon$ on $\Omega$. Hence the claim holds.
\end{proof}

\noindent{\it Remark.} The lemma is also applicable in case when $u\le v$ on the boundary of $\Omega\cap\{t\ge \tau\}$ for a given time $\tau$. Obviously then the assumption $|u|,|v|\to 0$ uniformly as $t\to-\infty$ is not necessary. Another important case, when Lemma as stated here applies is when $u|_{\partial\Omega},v|_{\partial\Omega}\in C_0(\partial\Omega)$ where $C_0(\partial\Omega)$ denotes the class of continuous functions decaying to zero as $t\to\pm\infty$. This class is dense in any $L^p(\partial\Omega,d\sigma)$, $p<\infty$ allowing us to consider an extension of the solution operator from $C_0(\partial\Omega)$ to $L^p(\partial\Omega,d\sigma)$.

\section{An estimate of the square function of a solution}\label{S31:Square}

In this section we find an $L^2$ estimate of the square function of a solution by the boundary data and the non-tangential maximal function.

\begin{lemma}\label{L:Square} Let $\Omega$ be a domain satisfying Definition \ref{D:domain} with smooth boundary $\partial\Omega$.
Let $u$ be any weak solution of \eqref{E:u} satisfying \eqref{E:UEB}, \eqref{E:B1}, \eqref{E:B2}, \eqref{E:A1}, and \eqref{E:A2} with Dirichlet boundary data $f \in L^{2}(\partial \Omega)$. Then there exist positive constants $C_1$ and $C_2$ independent of $u$ such that  $r_0>0$ small we have
\begin{equation}\label{E:sq10aa}\begin{split}
&\frac{C_1}2\int_{0}^{r_0/2}\int_{\partial\Omega} |\nabla u |^2 x_0\,dx\,dt\,dx_0
+ \frac{2}{r_0} \int_{0}^{r_0}\int_{\partial\Omega} u^{2}(x_0,x,t)\,dx\,dt\,dx_0 \\
&\leq    \int_{\partial\Omega} u^2(r_0,x,t)\,dx\,dt
       + \int_{\partial\Omega} u^2(0,x,t) \,dx\,dt\nonumber\\
       &+ C_2(\|\mu_1\|_{C,2r_0} +\|\mu_2\|_{C,2r_0}+\|\mu_2\|^{1/2}_{C,2r_0}) \int_{\partial\Omega} \left[N^{r_0}(u)\right]^2 \,dx\,dt.
\end{split}\end{equation}
\end{lemma}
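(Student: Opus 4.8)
The plan is to integrate the PDE against a carefully chosen test function over the slab $\{0<x_0<r_0\}\times\partial\Omega$ and use integration by parts to produce the square function term $\int |\nabla u|^2 x_0$ on the left, with all error terms either absorbed or controlled by the non-tangential maximal function. The natural choice here — mimicking the Rellich-type identity used in \cite{DPP} in the elliptic case — is to multiply the equation $u_t=\di(A\nabla u)+\boldsymbol B\cdot\nabla u$ by $u\,x_0$ (or more precisely by $u\,\varphi(x_0)$ for a cutoff $\varphi$ that equals $x_0$ near $0$ and is supported in $\{x_0<r_0\}$, so that the boundary contributions at $x_0=r_0$ appear with the right sign). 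First I would write the weak formulation: $\int u_t\,(u x_0) = -\int A\nabla u\cdot\nabla(u x_0) + \int (\boldsymbol B\cdot\nabla u)(u x_0)$. The left-hand side is $\tfrac12\int (u^2)_t x_0$, which vanishes after integrating in $t$ over $\BBR$ (since $u$ decays as $t\to\pm\infty$, consistent with $f\in L^2(\partial\Omega)$). On the right, $\nabla(u x_0) = x_0\nabla u + u\,\mathbf e_0$ (where $\mathbf e_0$ is the unit vector in the $x_0$ direction), so $-\int A\nabla u\cdot\nabla(u x_0) = -\int x_0\, A\nabla u\cdot\nabla u - \int u\, A\nabla u\cdot\mathbf e_0$.

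**Key steps.** The first term $-\int x_0 A\nabla u\cdot\nabla u$ is, by ellipticity \eqref{E:UEB}, bounded above by $-\lambda\int x_0|\nabla u|^2$, i.e. it gives us the square function with a good sign once moved to the other side. The second term $\int u\, A\nabla u\cdot\mathbf e_0 = \sum_j \int u\, a_{0j}\partial_j u$ must be integrated by parts once more in $x_0$ to extract the boundary terms $\int_{\partial\Omega} u^2(r_0,\cdot) - \int_{\partial\Omega}u^2(0,\cdot)$ together with the averaged interior term $\tfrac1{r_0}\int_0^{r_0}\int u^2$; this is where the factor $2/r_0$ and the two boundary integrals in the statement come from. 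The integration by parts in $x_0$ will also hit $a_{00}$ and generate a term containing $\partial_{x_0}a_{00}$, which is where the Carleson hypothesis enters: $x_0|\nabla A|^2\,dX\,dt$ is a Carleson measure (condition \eqref{E:A2}, i.e. $\|\mu_2\|_C$), so by Cauchy–Schwarz and the standard fact that $\int_\Omega |\nabla u|^2 g\,dX\,dt \lesssim \|g\|_{\mathrm{Carl}}\int_{\partial\Omega}N(u)^2$ when $g\,dX\,dt$ is Carleson (applied after using a Caccioppoli-type bound, Lemma \ref{L:Caccio}, to convert $|\nabla u|^2$ on a Whitney box into a solid average of $u^2$, hence into $N^2$), these error terms are absorbed into $C_2\|\mu_2\|_{C,2r_0}\int N_{r_0}^2(u)$. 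The drift term $\int (\boldsymbol B\cdot\nabla u)(u x_0)$ is handled identically: Cauchy–Schwarz splits it as $\tfrac\epsilon2\int x_0|\nabla u|^2 + \tfrac1{2\epsilon}\int x_0|\boldsymbol B|^2 u^2$, the first piece absorbed into the square function (choosing $\epsilon$ small relative to $\lambda$), and the second piece controlled by $\|\mu_1\|_{C,2r_0}\int N_{r_0}^2(u)$ since $x_0|\boldsymbol B|^2\,dX\,dt$ is Carleson by \eqref{E:B2}. Collecting terms and choosing the absorption constant yields \eqref{E:sq10aa} with $C_1$ depending only on $\lambda,\Lambda$ and $C_2$ on $\lambda,\Lambda,n$.

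**Main obstacle.** The routine calculus is the sequence of integration-by-parts steps and bookkeeping of boundary terms; the genuinely delicate point is the passage from a weighted interior integral of $|\nabla u|^2$ (or of $u^2$) against a Carleson-density weight to a bound by $\int_{\partial\Omega}N_{r_0}^2(u)$. This requires decomposing $\Omega\cap\{x_0<r_0\}$ into Whitney-type parabolic cubes, applying the interior Caccioppoli inequality (Lemma \ref{L:Caccio}) on each to replace $\int|\nabla u|^2$ by $r^{-2}\int u^2 \lesssim |Q|\,N(u)^2$ on a dilated cube, and then summing using the defining Carleson property of the measure — taking care that the finite-overlap of the dilated Whitney cubes and the truncation at height $r_0$ are compatible (this is why the right-hand side uses the Carleson norm at scale $2r_0$ and $N_{r_0}$ rather than $N^{r_0}$). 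A secondary subtlety is justifying the vanishing of $\int(u^2)_t x_0\,dX\,dt$ and the legitimacy of all the integrations by parts for a general $L^2$ solution; here one first proves the estimate for nice (e.g. continuous, compactly supported in $t$) data using the smoothness of $\partial\Omega$ assumed in the lemma, and then passes to the limit. I expect no conceptual difficulty beyond these, since the smoothness of $\partial\Omega$ assumed in Lemma \ref{L:Square} removes boundary-regularity complications that are deferred to later sections.
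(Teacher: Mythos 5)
Your overall strategy (integrate by parts against a test function of the form $u\,x_0$, extract the square function from the ellipticity, and push the error terms onto the Carleson hypotheses and $N(u)$) is the same one the paper uses, but as written it has a genuine gap at the step you describe as routine. When you expand $\nabla(u x_0)=x_0\nabla u+u\,\mathbf e_0$, the term $\int u\,A\nabla u\cdot\mathbf e_0=\sum_j\int a_{0j}\,u\,u_{x_j}\,dX\,dt$ carries \emph{no} factor of $x_0$. Integrating it by parts to extract the boundary terms produces $\tfrac12\int \partial_{x_0}a_{00}\,u^2\,dX\,dt$ (and analogous $\int\partial_{x_j}a_{0j}\,u^2$ terms), again with no $x_0$ weight. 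The hypotheses \eqref{E:A1}--\eqref{E:A2} control $x_0|\nabla A|$ in $L^\infty$ and $x_0|\nabla A|^2\,dX\,dt$ as a Carleson measure; they do \emph{not} make $|\nabla A|\,dX\,dt$ a Carleson measure, nor even a locally finite one. For instance $|\nabla A|\sim x_0^{-1}(\log(1/x_0))^{-1}$ satisfies \eqref{E:A1} and \eqref{E:A2} but $\int_0^{r}|\nabla A|\,dx_0=\infty$, so $\int\partial_{x_0}a_{00}\,u^2$ cannot be bounded by $\|\mu_2\|_C\int N^2(u)$ by any Cauchy--Schwarz/Caccioppoli manipulation (the conjugate factor is $\int x_0^{-1}u^2$, which diverges for solutions not vanishing on the boundary). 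This is exactly the term your "standard fact'' does not cover, since that fact requires the integrand to be paired with the Carleson density itself.

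The paper avoids this by two devices you omit. First, it works with the normalized form $2\sum_{i,j}\int \frac{a_{ij}}{a_{00}}u_{x_i}u_{x_j}x_0\zeta^2$, so that the dangerous unweighted term becomes $-2\int\frac{a_{00}}{a_{00}}u\,u_{x_0}\zeta^2=-\int\partial_{x_0}(u^2)\zeta^2$ for $j=0$: the coefficient is identically $1$, no derivative of $A$ is created, and one gets exactly the boundary integrals $\int u^2(r,\cdot)-\int u^2(0,\cdot)$. Second, for $j\neq 0$ it writes $1=\partial_{x_0}x_0$ and integrates by parts once more in $x_0$ \emph{before} differentiating the coefficients, so every surviving term containing $\nabla A$ or $A_t$ reappears with the weight $x_0$ (or $x_0^2$) that the Carleson hypotheses require. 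The price of the $1/a_{00}$ normalization is that the time-derivative term no longer collapses to $\tfrac12\int\partial_t(u^2)x_0=0$ as in your setup; it must instead be converted, via $2x_0=\partial_{x_0}x_0^2$, into the area function $\int x_0^3|u_t|^2$, which is then dominated by the square function through Lemma \ref{L:AS} (using the second-gradient Caccioppoli inequality, Lemma \ref{L:Caccio2}). Two further mechanical points you would also need: the cutoffs $\zeta$ must form a partition of unity with $\sum_k\zeta_k^2=1$ so that the $\zeta\zeta_{x_i}$, $\zeta\zeta_t$ terms cancel upon summation over charts, and the term $\frac{2}{r_0}\int_0^{r_0}\int u^2$ in \eqref{E:sq10aa} arises by averaging the resulting identity over $r\in[0,r_0]$ and using $(\partial_{x_0}u^2)x_0=\partial_{x_0}(u^2x_0)-u^2$, not directly from a single integration by parts.
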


\noindent {\it Proof of Lemmas \ref{L:Square2} and \ref{L:Square}}. 
Both lemmas are proven at the same time. We start with Lemma \ref{L:Square} and then address the local result (Lemma \ref{L:Square2}). We begin with a local estimate on a parabolic ball $Q_r(y,s)$, for a point $(y, s)\in \partial U$ and a radius $r>0$ to be determined later, by considering the expression
\begin{equation}\label{Sq00}
2\sum_{i,j=0}^{n-1}\int_{0}^{r}\int_{Q_{2r}(y,s)} \frac{a_{ij}}{a_{00}} u_{x_i} u_{x_j} x_0 \zeta^2 \,dx\,dt\,dx_0
\end{equation}
where $\zeta$ is a cutoff function independent of the $x_0 -$variable satisfying
\[
\zeta = \begin{cases}
        1 & \text{in} \quad Q_{r}(y,s), \\
        0 & \text{outside} \quad Q_{2r}(y,s),
        \end{cases}
\]
such that for some constant $0< c < \infty$
\[
r |\partial_{x_{i}} \zeta| + r^2 |\zeta_{t}| \leq c \quad \text{where} \quad 1\leq i \leq n-1 .
\]
For brevity, let $Q_{r}=Q_{r}(y,s)$ and $Q_{2r}=Q_{2r}(y,s)$. Because of the cutoff function $\zeta$ and the uniform ellipticity and boundedness of the matrix $A$, the quantity \eqref{Sq00} is bounded below by
\begin{equation}\label{Sq01}
\frac{2\lambda}{\Lambda} \int_{0}^{r}\int_{Q_{r}} \left| \nabla u \right|^{2} x_0 \,dx\,dt\,dx_0\le 2\sum_{i,j}\int_{0}^{r}\int_{Q_{2r}} \frac{a_{ij}}{a_{00}} u_{x_i} u_{x_j} x_0 \zeta^2 \,dx\,dt\,dx_0,
\end{equation}
where the expression on the left-hand side of (\ref{Sq01}) represent a piece of the $L^2$ norm of the square function truncated to the Carleson region  $T(Q_r)$.

To estimate the right-hand side of \eqref{Sq00}, we integrate by parts in the $x_i -$variable (note that the outer normal vector is $\nu=(1,0,\ldots,0)$ because the domain $U$ is just $\{x_0>0\}$). From now on we use the 
Einstein notation and sum over repeating indices. Recall the both $i$ and $j$ are summed from $0$ to $n-1$.
We get
\begin{equation}\begin{split}\label{Sq0}
&2\int_{0}^{r}\int_{Q_{2r}} \frac{a_{ij}}{a_{00}} u_{x_i} u_{x_j} x_0 \zeta^2 \,dx\,dt\,dx_0
=   2\int_{Q_{2r}} \frac{a_{0j}}{a_{00}} u(r,x,t) u_{x_j}(r,x,t) r \zeta^2 \,dx\,dt \\
&\quad   - 2\int_{0}^{r}\int_{Q_{2r}} \frac{1}{a_{00}} u \partial_{x_{i}}\left(a_{ij}u_{x_j}\right) x_0 \zeta^2 \,dx\,dt\,dx_0 \\
&\quad - 2\int_{0}^{r}\int_{Q_{2r}} \partial_{x_{i}}\left(\frac{a_{ij}}{a_{00}}\right) u u_{x_j} x_0 \zeta^2 \,dx\,dt\,dx_0 \\
&\quad   - 4\int_{0}^{r}\int_{Q_{2r}} \frac{a_{ij}}{a_{00}} u u_{x_j} x_0 \zeta \zeta_{x_i} \,dx\,dt\,dx_0
         - 2\int_{0}^{r}\int_{Q_{2r}} \frac{a_{0j}}{a_{00}} u u_{x_j} \zeta^2 \,dx\,dt\,dx_0 \\
& = I + II + III + IV + V .
\end{split}\end{equation}

We use the parabolic PDE \eqref{E:u} to split the second term $II$ into two new terms
\[\begin{split}
 II
& = - 2\int_{0}^{r}\int_{Q_{2r}} \frac{1}{a_{00}} u u_{t} x_0 \zeta^2 \,dx\,dt\,dx_0\\
    &+ 2\sum_{i}\int_{0}^{r}\int_{Q_{2r}} \frac{1}{a_{00}} b_i u u_{x_i} x_0 \zeta^2 \,dx\,dt\,dx_0  = II_{1} + II_{2}.
\end{split}\]

We take the integration by parts with respect to $x_0$-variable by observing that $2 x_0 = \partial_{x_0} x_0^2$. This gives
\[\begin{split}
II_{1}
&= - \int_{0}^{r}\int_{Q_{2r}} \frac{1}{a_{00}} u u_{t} \left(\partial_{x_0} x_0^2\right) \zeta^2 \,dx\,dt\,dx_0 \\
&= - \int_{Q_{2r}} \frac{1}{a_{00}} u(r,x,t) u_{t}(r,x,t) r^2 \zeta^2 \,dx\,dt\\ &+ \int_{0}^{r}\int_{Q_{2r}} \partial_{x_0}\left(\frac{1}{a_{00}}\right) u u_{t} x_0^2 \zeta^2 \,dx\,dt\,dx_0 \\
&\quad + \int_{0}^{r}\int_{Q_{2r}} \frac{1}{a_{00}} u_{x_0} u_{t} x_0^2 \zeta^2 \,dx\,dt\,dx_0
       + \int_{0}^{r}\int_{Q_{2r}} \frac{1}{a_{00}} u \left(\partial_{x_0}u_{t}\right) x_0^2 \zeta^2 \,dx\,dt\,dx_0 \\
&= II_{11} + II_{12} + II_{13} + II_{14}.
\end{split}\]

First, we analyze $II_{11}$ by integrating by parts in the $t$-variable
\[\begin{split}
II_{11}
&= - \frac{1}{2}\int_{Q_{2r}} \frac{1}{a_{00}} \partial_{t}\left(u^2\right)(r,x,t) r^2 \zeta^2 \,dx\,dt \\
&= \frac{1}{2}\int_{Q_{2r}} \partial_{t}\left(\frac{1}{a_{00}}\right) u^2(r,x,t) r^2 \zeta^2 \,dx\,dt
 + \int_{Q_{2r}} \frac{1}{a_{00}} u^2(r,x,t) r^2 \zeta\zeta_{t} \,dx\,dt \\
&= II_{111} + II_{112}.
\end{split}\]
Hence the first term of this expression is bounded by
\[
II_{111} \leq \frac{1}{2\lambda^2}\int_{Q_{2r}} |A_{t}| u^2(r,x,t) r^2 \zeta^2 \,dx\,dt.
\]
Next, we bound the term $II_{12}$ using the area function we have defined previously.
\[\begin{split}
&II_{12}
= - \int_{0}^{r}\int_{Q_{2r}} \frac{\partial_{x_0} a_{00}}{a_{00}^{2}} u u_{t} x_0^2 \zeta^2 \,dx\,dt\,dx_0 \\
&\leq \frac{1}{\lambda^2}\left(\int_{0}^{r}\int_{Q_{2r}} x_0 \left|\nabla A\right|^2 u^2 \zeta^2 \,dx\,dt\,dx_0\right)^{1/2}
      \left(\int_{0}^{r}\int_{Q_{2r}} \left|u_{t}\right|^2 x_0^3 \zeta^2 \,dx\,dt\,dx_0\right)^{1/2}.
\end{split}\]
In the term $II_{14}$, we switch the order of derivatives ($\partial_{x_0}u_t=\partial_{t} u_{x_0}$) and then carry out integration by parts in the $t-$variable.
\[\begin{split}
II_{14}
&=     - \int_{0}^{r}\int_{Q_{2r}} \partial_{t}\left(\frac{1}{a_{00}}\right) u u_{x_0} x_0^2 \zeta^2 \,dx\,dt\,dx_0 \\
&\quad - \int_{0}^{r}\int_{Q_{2r}} \frac{1}{a_{00}} u_{t} u_{x_0} x_0^2 \zeta^2 \,dx\,dt\,dx_0- 2\int_{0}^{r}\int_{Q_{2r}} \frac{1}{a_{00}} u u_{x_0} x_0^2 \zeta\zeta_{t} \,dx\,dt\,dx_0 \\
& = II_{141} + II_{142} + II_{143}.
\end{split}\]
We observe that
\[\begin{split}
&II_{141}
= \int_{0}^{r}\int_{Q_{2r}}\frac{\partial_{t} a_{00}}{a_{00}^{2}} u u_{x_0} x_0^2 \zeta^2 \,dx\,dt\,dx_0 \\
&\leq \frac{1}{\lambda^2}\left(\int_{0}^{r}\int_{Q_{2r}} x_0^3 \left|A_{t}\right|^2 u^2 \zeta^2 \,dx\,dt\,dx_0\right)^{1/2}
      \left(\int_{0}^{r}\int_{Q_{2r}} \left|\nabla u\right|^2 x_0 \zeta^2 \,dx\,dt\,dx_0\right)^{1/2},
\end{split}\]
and
\[
II_{142} = - II_{13} .
\]

By the Cauchy-Schwarz inequality we have for $II_2$:
\[
II_{2}
\leq \frac{2n}{\lambda} \left(\int_{0}^{r}\int_{Q_{2r}} x_0 |\boldsymbol{B}|^2 u^2 \zeta^2 \,dx\,dt\,dx_0\right)^{1/2}
       \left(\int_{0}^{r}\int_{Q_{2r}}|\nabla u|^2 x_0 \zeta^2 \,dx\,dt\,dx_0 \right)^{1/2}.
\]

Next, we look at $III$:
\[\begin{split}
& III
=   2\sum_{i,j} \int_{0}^{r}\int_{Q_{2r}} \frac{a_{ij}\partial_{x_i} a_{00}-a_{00}\partial_{x_i}a_{ij}}{a_{00}^{2}} u u_{x_j} x_0 \zeta^2 \,dx\,dt\,dx_0 \\
&\leq \frac{4 n^2\Lambda}{\lambda^{2}} \left(\int_{0}^{r}\int_{Q_{2r}} x_0 |\nabla A|^2 u^2 \zeta^{2} \,dx\,dt\,dx_0\right)^{1/2}
      \left(\int_{0}^{r}\int_{Q_{2r}} |\nabla u|^2 \zeta^{2} x_0 \,dx\,dt\,dx_0 \right)^{1/2}.
\end{split}\]

The last term we look at is the integral quantity $V$ by considering two cases $j=0$ and $j\neq 0$. First for $j=0$, we have
\[\begin{split}
V_{\{j=0\}}
&= -\int_{0}^{r}\int_{Q_{2r}} \partial_{x_0}(u^{2}) \zeta^{2} \,dx\,dt\,dx_0 \\
&= -\int_{Q_{2r}} u^2(r,x,t) \zeta^2 \,dx\,dt + \int_{Q_{2r}} u^2(0,x,t) \zeta^2 \,dx\,dt
\end{split}\]
When $j\ne 0$ integrating by parts further using  $1=\partial_{x_0} x_0$ we get
\[\begin{split}
V_{\{j \neq 0\}}
&= -2\int_{0}^{r}\int_{Q_{2r}} \frac{a_{0j}}{a_{00}} u u_{x_j}  \left(\partial_{x_0} x_0\right)  \zeta^2 \,dx\,dt\,dx_0 \\
&= -2\int_{Q_{2r}} \frac{a_{0j}}{a_{00}} u(r,x,t) u_{x_j}(r,x,t) r \zeta^2 \,dx\,dt\\
   &+ 2\int_{0}^{r}\int_{Q_{2r}} \partial_{x_0} \left(\frac{a_{0j}}{a_{00}}\right) u u_{x_j} x_0 \zeta^2 \,dx\,dt\,dx_0 \\
&\quad + 2\int_{0}^{r}\int_{Q_{2r}} \frac{a_{0j}}{a_{00}} u_{x_0} u_{x_j} x_0 \zeta^2 \,dx\,dt\,dx_0\\
       &+ 2\int_{0}^{r}\int_{Q_{2r}} \frac{a_{0j}}{a_{00}} u \left(\partial_{x_0 x_j} u\right) x_0 \zeta^2 \,dx\,dt\,dx_0 \\
&= V_{1} + V_{2} + V_{3} + V_{4}.
\end{split}\]
Observe that
\[
V_{1} = -I_{\{j \neq 0\}}.
\]
It follows that
\[
V_{2}
= 2\int_{0}^{r}\int_{Q_{2r}}\frac{a_{00}\partial_{x_0}a_{0j} - a_{0j}\partial_{x_0}a_{00}}{a_{00}^{2}} u u_{x_j} x_0 \zeta^2 \,dx\,dt\,dx_0
\]
and therefore
\[\begin{split}
&\left|\sum_{j\neq 0} V_{2}\right| \leq \\&\frac{4 n \Lambda}{\lambda^{2}}
                           \left(\int_{0}^{r} \int_{Q_{2r}} x_0 |\nabla A|^{2} u^{2}  \zeta^{2} \,dx\,dt\,dx_0\right)^{1/2}
                           \left(\int_{0}^{r} \int_{Q_{2r}}|\nabla u|^{2} x_0 \zeta^2 \,dx\,dt\,dx_0\right)^{1/2}.
\end{split}
\]
To study $V_{4}$, we take advantage that $j\neq 0$. We switch the order of derivatives so that we work with $\partial_{x_j x_0} u$  and take the integration by parts in the $x_j$-variable. This give us
\[\begin{split}
V_{4}
&= -2\int_{0}^{r} \int_{Q_{2r}} \partial_{x_j}\left(\frac{a_{0j}}{a_{00}}\right) u u_{x_0} x_0 \zeta^2 \,dx\,dt\,dx_0 \\
&\quad - 2\int_{0}^{r} \int_{Q_{2r}} \frac{a_{0j}}{a_{00}} u_{x_j} u_{x_0} x_0 \zeta^2 \,dx\,dt\,dx_0
       - 4\int_{0}^{r} \int_{Q_{2r}} \frac{a_{0j}}{a_{00}} u u_{x_0}  x_0 \zeta \zeta_{x_j} \,dx\,dt\,dx_0 \\
&= V_{41} + V_{42} + V_{43}.
\end{split}\]
As with $V_2$, we have the same upper bound for $V_{41}$
\[\begin{split}
&\left|\sum_{j\neq 0}V_{41}\right|
\leq\\ &\frac{4 n \Lambda}{\lambda^2} \left(\int_{0}^{r}\int_{Q_{2r}} x_0 |\nabla A|^2 u^2 \zeta^2 \,dx\,dt\,dx_0\right)^{1/2}
                                    \left(\int_{0}^{r}\int_{Q_{2r}} |\nabla u|^{2} x_0 \zeta^2 \,dx\,dt\,dx_0 \right)^{1/2}.
\end{split}
\]
Next,
\[
V_{42} = - V_{3} .
\]

We now put together all terms we have encountered (and that did not cancel out). There are 4 types of terms:
\begin{align*}
J_1 &= I_{\{j=0\}} + II_{111} + V_{\{j=0\}}, \\
J_2 &= II_{12} \\
J_3 &= II_{141} + II_{2} +III + \sum_{j\neq 0} V_{2} + \sum_{j\neq 0} V_{41}\\
J_4 &= II_{112} + II_{143} + IV + \sum_{j \neq 0} V_{43}.
\end{align*}

The following crucial result will be used for terms containing $\nabla A$ or $\boldsymbol{B}$. For any function $u$ and a Carleson measure $\mu$ we have that
\[
\int_{U} |u|^{2} \,d\mu \leq \|\mu\|_C \|N(u)\|^{2}_{L^{2}(\mathbb{R}^{n})},
\]
with a local version of this statement (on any Carleson box) holding as well.

The first term we use this result for is $J_2$. Since $\mu_2$ is a Carleson measure we have:
\[
J_2
\leq \frac{1}{\lambda^2} \left(\|\mu_2\|_{C,2r} \int_{Q_{2r}} \left[N^{r}(u)\right]^2 \,dx\,dt\right)^{1/2}
          \left(\int_{0}^{r}\int_{Q_{2r}} |u_t|^2 x_0^3 \zeta^2 \,dx\,dt\,dx_0\right)^{1/2} .
\]
With a constant
\[
C_1 = \max \left\{ \frac{4n^2\Lambda + 8n\Lambda}{\lambda^2}, \frac{2n}{\lambda}, \frac{1}{\lambda^2} \right\},
\]
it follows using \eqref{E:B1}-\eqref{E:A2} that
\[\begin{split}
J_{3}
&\leq C_1 \left(\int_{0}^{r}\int_{Q_{2r}} \left( x_0 |\nabla A|^2 + x_0 |\boldsymbol{B}|^2 + x_0^3 |A_t|^2 \right) u^2 \zeta^2 \,dx\,dt\,dx_0\right)^{1/2} \\
&\quad \times \left(\int_{0}^{r}\int_{Q_{2r}} |\nabla u|^2 x_0 \zeta^2 \,dx\,dt\,dx_0\right)^{1/2} \\
&\leq C_1 \left( (\|\mu_1\|_{C,2r} +\|\mu_2\|_{C,2r}) \int_{Q_{2r}} N_{r}^{2}(u) \,dx\,dt\right)^{1/2}\\
&\quad\times          \left(\int_{0}^{r}\int_{Q_{2r}} |\nabla u|^2 x_0 \zeta^2 \,dx\,dt\,dx_0\right)^{1/2}.
\end{split}\]

Moreover, due to \eqref{E:A1} we have
\[
\frac{1}{2\lambda^2}\int_{Q_{2r}} r^2 |A_{t}| u^2(r,x,t)\zeta^2 \,dx\,dt
 \leq \frac{\|\mu_2\|^{1/2}_{C,2r}}{2\lambda^2} \int_{Q_{2r}}\left[N^{r}(u)\right]^2 \,dx\,dt.
\]

Hence, it follows that
\begin{equation}\label{Sq08}\begin{split}
& 2\int_{0}^{r}\int_{Q_{2r}} \frac{a_{ij}}{a_{00}} u_{x_i} u_{x_j} x_0 \zeta^2 \,dx\,dt\,dx_0 =J_1 + J_2 + J_3 + J_4 \\
&\leq    \int_{Q_{2r}} \partial_{x_0} [u^2(r,x,t)] r \zeta^2 \,dx\,dt
       + \frac{\|\mu_2\|^{1/2}_{C,2r}}{2\lambda^2} \int_{Q_{2r}}\left[N^{r}(u)\right]^2 \,dx\,dt \\
&\quad - \int_{Q_{2r}} u^2(r,x,t) \zeta^2 \,dx\,dt
       + \int_{Q_{2r}} u^2(0,x,t) \zeta^2 \,dx\,dt \\
&\quad + \frac{1}{\lambda^2} \left( \|\mu_2\|_{C,2r} \int_{Q_{2r}} \left[N^{r}(u)\right]^2 \,dx\,dt\right)^{1/2}
          \left(\int_{0}^{r}\int_{Q_{2r}} |u_t|^2 x_0^3 \zeta^2 \,dx\,dt\,dx_0\right)^{1/2}\\
&\quad +  C_1 \left( (\|\mu_1\|_{C,2r} +\|\mu_2\|_{C,2r}) \int_{Q_{2r}} \left[N^{r}(u)\right]^2 \,dx\,dt\right)^{1/2}\\
&\quad\times          \left(\int_{0}^{r}\int_{Q_{2r}} |\nabla u|^2 x_0 \zeta^2 \,dx\,dt\,dx_0\right)^{1/2}\\
&\quad + J_4 .
\end{split}\end{equation}

We now use (\ref{Sq08}) to obtain a global estimate on a collar neighborhood of $\Omega$. Recall, that in addition to Definition \ref{D:domain} we have also assumed that $\partial\Omega$ is smooth. It follows that there exists a collar neighborhood $V$ of $\partial\Omega$ in ${\BBR}^{n+1}$ such that $\Omega\cap V$ can be
parameterized as $(0,r)\times\partial\Omega$ for some small $r>0$. These new coordinates are defined as follows.

Consider a smooth function $G:V\to {\mathbb S}^{n+1}$ such that for each $(Y,s)\in V$
the unit vector $G(Y,s)$ is in \lq good' direction (see subsection \ref{S:alld}). Given a boundary point $(X,\tau)\in\partial \Omega$ we solve the ODE
$$\gamma'(s)= G(\gamma(s)),\qquad \gamma(0)=(X,\tau)$$
and set $(x_0,X,\tau)=\gamma(x_0)$ for all $x_0>0$ small so that $\gamma(x_0)\in V\cap \Omega$.

We also introduce local coordinates on $\partial\Omega$ to parameterize $(X,\tau)\in\partial\Omega$. We consider local coordinate chart $\varphi$ from a neighborhood $Q_{2r}(0,0)$ of a point $(0,0)\in \partial U$
to a neighborhood of a point in $\partial\Omega$. Then the map
$$(x_0,x,t)\mapsto (x_0,\varphi(x,t))$$
maps neighborhood of $(0,0,0)$ in $\overline{U}$ to a neighborhood in $\overline{V\cap\Omega}$ of a point in $\partial\Omega$.

We choose $r>0$ small enough so that for all $0<x_0\le 2r$ and $(0,x,t)\in\partial U$ the point $(x_0,\varphi(x,t))\in V\cap \Omega$. It follows from the Definition \ref{D:domain} that there is a collection of coordinate charts covering $\partial\Omega$, with each point belonging to at most $K=K(N,n)<\infty$ different charts. Consider a partition of unity subordinate to this collection, and let $\{\zeta_{k}\}_{k=1}^{\infty}$, such that for all $k$
\[
\zeta_k = \begin{cases}
        1 & \text{in} \quad Q_{r}(y_k,s_k), \\
        0 & \text{outside} \quad Q_{2r}(y_k,s_k),
        \end{cases}
\]
for some constant $0< c=c(n) < \infty$
\[
r |\partial_{x_{i}} \zeta_k| + r^2 |\partial_{t} \zeta_{k}| \leq c \qquad \quad 1\leq i \leq n-1
\]
and $\sum_{k} \zeta^2_{k} = 1$ everywhere. Now we take the sum of expressions
$$2\int_{0}^{r}\int_{Q_{2r}} \frac{a_{ij}}{a_{00}} u_{x_i} u_{x_j} x_0 \zeta^2 \,dx\,dt\,dx_0$$
over all coordinate charts. Note that this expression is independent of the choice of coordinate map $\varphi$, as $x_0$ and $a_{00}$ do not depends on $\varphi$
(the variable $x_0$ is global). Hence, using (\ref{Sq08}) we obtain a lower bound for
$$\frac2{\Lambda}\int_{0}^{r}\int_{\partial\Omega} (A\nabla u\cdot \nabla u) x_0  \,dx\,dt\,dx_0$$
which is an expression comparable to $\|S^r(u)\|^2_{L^2(\partial\Omega)}$ (this is the truncated square function at height $r$).

The terms $J_4$ in (\ref{Sq08}) all contain terms of the type $\zeta_k\zeta_{k\,x_i}$ or $\zeta_k\zeta_{k\,t}$ which add up to zero when summed
over all partitions (since $\sum_{k} \zeta^2_{k} = 1$). This yields
\begin{equation}\label{Sq09}\begin{split}
&\frac{2\lambda}{\Lambda}\|S^r(u)\|^2_{L^2(\partial\Omega)}=\frac{2\lambda}{ \Lambda}\int_{0}^{r}\int_{\partial\Omega} |\nabla u |^2  x_0 \,dx\,dt\,dx_0 \\
&\leq    \int_{\partial\Omega} \left(\partial_{x_0} u^2\right)(r,x,t) r  \,dx\,dt
       + \frac{K\|\mu_2\|^{1/2}_{C,2r}}{2\lambda^2} \int_{\partial\Omega}\left[N^{r}(u)\right]^2 \,dx\,dt \\
&\quad - \int_{\partial\Omega} u^2(r,x,t) \,dx\,dt
       + \int_{\partial\Omega} u^2(0,x,t) \,dx\,dt \\
&\quad + \frac{\|\mu_2\|_{C,2r}}{2\lambda^2} K(\eta)\int_{\partial\Omega} \left[N^{r}(u)\right]^2 \,dx\,dt+
          \eta\int_{0}^{r}\int_{\partial\Omega} |u_t|^2 x_0^3 \,dx\,dt\,dx_0\\
&\quad +  C_1(\eta) (\|\mu_1\|_{C,2r} +\|\mu_2\|_{C,2r})\int_{\partial\Omega} \left[N^{r}(u)\right]^2 \,dx\,dt\\
&\quad +
          \eta\int_{0}^{r}\int_{\partial\Omega} |\nabla u|^2 x_0 \,dx\,dt\,dx_0,
\end{split}\end{equation}
for any $\eta>0$. 

The following lemma handles the area function in (\ref{Sq09}) in terms of  the square and non-tangential maximal functions.

\begin{lemma}\label{L:AS}
Let $u$ be a solution of \eqref{E:u} satisfying \eqref{E:UEB}, \eqref{E:B1}, \eqref{E:B2}, \eqref{E:A1}, and \eqref{E:A2} with Carleson norm bounded by $K$.
Then given $a>0$  there exists a constant $C= C(\Lambda, a,K)$ such that,
\[
A_{a}(u)(x,t)\le C S_{2a}(u)(x,t).
\]
From this we also have the global estimate
\[
\|A_{a}(u)\|_{L^{2}(\partial \Omega)}^{2} \leq C_2 \|S_{a}(u)\|_{L^{2}(\partial \Omega)}^{2}.
\]
\end{lemma}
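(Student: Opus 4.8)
\medskip

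The plan is to establish the pointwise inequality $A_a(u)(x,t) \le C\, S_{2a}(u)(x,t)$ and then integrate over $\partial\Omega$ to obtain the global $L^2$ statement; the latter is immediate once the pointwise bound is in hand. So the whole content is the pointwise estimate, and the natural route is a Caccioppoli-type argument on the parabolic cones.

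\medskip

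First I would recall that $A_a(u)^2(x,t) = \int_{\Gamma_a(x,t)} (y_0-x_0)^{-n+2} |u_t|^2 \,dy_0\,dy\,ds$, so I need to control $|u_t|^2$ weighted by $x_0^2$ relative to $|\nabla u|^2$ (the weight in $S$ carries $(y_0-x_0)^{-n}$). The key observation, mentioned in the text right after the definition of the area function, is that for solutions of the parabolic PDE \eqref{E:u} the time derivative $u_t$ is controlled by $\nabla^2 u$ (indeed $u_t = \di(A\nabla u) + \boldsymbol{B}\cdot\nabla u$, so $|u_t| \lesssim |\nabla^2 u| + (|\nabla A| + |\boldsymbol{B}|)|\nabla u|$). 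Thus the plan is: (a) fix $(y_0,y,s)\in\Gamma_a(x,t)$ and apply Lemma \ref{L:Caccio2} on the parabolic cube $Q_{y_0/C}(y_0,y,s)$ — legitimate since \eqref{E:A1} and \eqref{E:B1} give exactly the hypothesis $|\nabla A|, |\boldsymbol{B}| \le K/r$ on such a cube with $r \approx y_0$ — to get $\int_{Q_{y_0/2C}} |\nabla^2 u|^2 \lesssim y_0^{-2}\int_{Q_{y_0/C}} |\nabla u|^2$; (b) combine with the PDE to bound $\int_{Q_{y_0/2C}} |u_t|^2 \lesssim y_0^{-2}\int_{Q_{y_0/C}} |\nabla u|^2 + \int_{Q_{y_0/C}}(|\nabla A|^2 + |\boldsymbol{B}|^2)|\nabla u|^2$, and on such a cube $|\nabla A|^2 + |\boldsymbol{B}|^2 \lesssim y_0^{-2}$ by \eqref{E:A1}, \eqref{E:B1}, so the second term is absorbed into the first; (c) deduce the pointwise bound $|u_t|^2(y_0,y,s) \lesssim y_0^{-(n+1)-2}\int_{Q_{y_0/C}} |\nabla u|^2$ by interior regularity (Lemma \ref{Holder} applied to $u_t$, which also solves a parabolic equation, or directly a sub-mean-value estimate).

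\medskip

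Then I would insert this into the definition of $A_a(u)$: multiplying by $(y_0-x_0)^{-n+2} \approx y_0^{-n+2}$ and using $y_0 \approx y_0 - x_0$ along the cone (up to constants depending on $a$), one gets
\[
A_a(u)^2(x,t) \lesssim \int_{\Gamma_a(x,t)} y_0^{-n-1} \left( \int_{Q_{y_0/C}(y_0,y,s)} |\nabla u|^2 \,dz_0\,dz\,d\tau \right) dy_0\,dy\,ds.
\]
A Fubini argument then exchanges the order of integration: each point $(z_0,z,\tau)$ lies in $Q_{y_0/C}(y_0,y,s)$ for $(y_0,y,s)$ ranging over a set of measure $\approx y_0^{n+1} \approx z_0^{n+1}$, and whenever $(z_0,z,\tau)$ arises this way from a cone vertex $(x,t)$ it lies in the slightly wider cone $\Gamma_{2a}(x,t)$ (with the aperture opened to absorb the cube). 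The factor $z_0^{n+1}$ from the Fubini count cancels the $y_0^{-n-1}$ weight, leaving exactly $\int_{\Gamma_{2a}(x,t)} z_0^{-n} |\nabla u|^2 = S_{2a}(u)^2(x,t)$, up to constants depending on $\Lambda$, $a$, $K$. Integrating over $(x,t)\in\partial\Omega$ and invoking Lemma \ref{L:Equ} to pass between the apertures $a$ and $2a$ in $L^2$ yields the global bound with constant $C_2 = C_2(\Lambda,a,K)$.

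\medskip

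The main obstacle I anticipate is the bookkeeping in step (b)–(c): one must choose the cube radius proportional to $y_0$ with a dimensional constant small enough that all invoked interior estimates (Caccioppoli for $\nabla^2 u$, the sub-mean-value/Hölder bound for $u_t$, and the coefficient bounds \eqref{E:A1}, \eqref{E:B1}) apply on nested cubes, yet large enough that the Fubini exchange covers $\Gamma_a$ by cubes with vertices filling $\Gamma_{2a}$; this is where the passage from aperture $a$ to aperture $2a$ is genuinely used, and where one has to be careful that the constants stay uniform in $(x,t)$. The remaining steps are standard once this geometry is pinned down.
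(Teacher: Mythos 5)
Your overall strategy is exactly the paper's: use the equation to write $|u_t|^2\lesssim |A|^2|\nabla^2u|^2+(|\nabla A|^2+|\boldsymbol{B}|^2)|\nabla u|^2$, control $\nabla^2u$ by the second-gradient Caccioppoli inequality (Lemma \ref{L:Caccio2}) on Whitney cubes of the cone where \eqref{E:A1} and \eqref{E:B1} supply the hypothesis $|\nabla A|,|\boldsymbol{B}|\le K^{1/2}/x_0$, and pay for the enlarged cubes by opening the aperture from $a$ to $2a$; the paper then sums the resulting $L^2$ bounds over a Whitney decomposition of $\Gamma_a(x,t)$ (its boxes $Q_{x_0}$), which is your Fubini step in slab form.

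The one step you should drop is (c). A pointwise sup bound $|u_t|^2(y_0,y,s)\lesssim y_0^{-n-4}\int_{Q}|\nabla u|^2$ is not available from the tools you cite: Lemma \ref{Holder} gives H\"older continuity of bounded \emph{solutions}, and $u_t$ does not solve a tractable parabolic equation here (differentiating \eqref{E:u} in $t$ produces $A_t$ and $\boldsymbol{B}_t$, of which only $A_t$ is controlled, and only in the Carleson sense $x_0^2|A_t|\le C_\epsilon^{1/2}$). Fortunately (c) is not needed: your step (b) already gives the $L^2$ bound $\int_{Q_{y_0/2C}}|u_t|^2\lesssim y_0^{-2}\int_{Q_{y_0/C}}|\nabla u|^2$ on each Whitney cube, and inserting \emph{that} into the cone integral (decomposed into such cubes with bounded overlap) and exchanging the order of integration lands directly on $S_{2a}(u)^2(x,t)$ — this is precisely the paper's computation. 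Relatedly, the exponents in your intermediate display are internally inconsistent (a weight $y_0^{-n-1}$ cancelled by a Fubini factor $z_0^{n+1}$ leaves weight $1$, not $z_0^{-n}$; the interior parabolic cube has measure $\sim r^{n+2}$, not $r^{n+1}$), but the bookkeeping does close up correctly once you work at the level of cube averages rather than pointwise values.
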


\begin{proof}
Given the PDE \eqref{E:u} we have that
\[
|u_{t}|^{2} \leq 3|A|^2 |\nabla^{2}u|^{2} + 3\left(|\nabla A|^2 + |\boldsymbol{B}|^2\right) |\nabla u|^{2}.
\]
Therefore, from the definition of the area function, it follows
\[\begin{split}
& A_{a}^{2}(u)(x,t) \\
&= \int_{\Gamma_{a}(x,t)} |u_{t}|^{2} x_{0}^{-n+2} \,dx_0\,dy\,ds \lesssim \int_{0}^{\infty}x_{0}^{-n+3}\int_{Q_{x_0}} |u_{t}|^{2} \,dy\,ds\,dx_0 \\
&\leq 3 \int_{0}^{\infty}x_{0}^{-n+3}\int_{Q_{x_0}}  \left[|A|^2 |\nabla^{2}u|^{2} + \left(|\nabla A|^2 + |\boldsymbol{B}|^2\right) |\nabla u|^{2}\right] \,dy\,ds\,dx_0.
\end{split}\]
Here $$Q_{x_0}:=\{(y_0,y,s):\,|y_0-x_0|\le x_0/4\text{ and } |y-x|+|s-t|^{1/2}\le ax_0\}.$$

Hence for any fixed $y_0 >0$, we can use Lemma \ref{L:Caccio2} for $\nabla^{2} u$ (observe that the assumptions on the coefficients
in Lemma \ref{L:Caccio2} are satisfied on each $Q_{x_0}$). Also by the Carleson condition $|\nabla A|,|\boldsymbol{B}|\le K^{1/2}/x_0$ on $Q_{x_0}$, hence we obtain that
\[\begin{split}
& \int_{Q_{x_0}}  \left[|A|^2 |\nabla^{2}u|^{2} + \left(|\nabla A|^2 + |\boldsymbol{B}|^2\right) |\nabla u|^{2}\right] \,dy\,ds \\
& \leq \int_{Q_{2x_0}}  x_0^{-2} \left[C_a(K)|A|^2 |\nabla u|^{2} + 2K|\nabla u|^{2} \right] \,dy\,ds\\
&=C(\Lambda,a,K)x_0^{-2}\int_{Q_{2x_0}}|\nabla u|^{2} \,dy\,ds.
\end{split}\]
It follows that
\begin{equation}\label{AS02}\begin{split}
A_{a}^{2}(u)(x,t)&\le 3C(\Lambda,a,K) \int_{0}^\infty x_{0}^{-n+1}\int_{Q_{2x_0}}|\nabla u|^{2} \,dy\,ds\,dx_0\\
&\approx 3C(\Lambda,a,K)\int_{\Gamma_{2a}(x,t)} |\nabla u|^{2}x_0^{-n} \,dy_0\,dy\,ds.
\end{split}\end{equation}
As the last integral is just the square function (squared) the desired result holds. The global estimate follows from the local one.
\end{proof}

By Lemma \ref{L:AS} we see that the square function on the right-hand side of (\ref{Sq09}) is always preceded by $\eta>0$ which we are allowed to choose as required. We choose $\eta>0$ small enough so that all terms containing the square function are so small that they can be absorbed by the square function on the left-hand side.

This yields for some $C_3>0$:

\begin{equation}\label{Sq09a}\begin{split}
C_3&\|S^r(u)\|^2_{L^2(\partial\Omega)}\leq    \int_{\partial\Omega} \left(\partial_{x_0} u^2\right)(r,x,t) r  \,dx\,dt - \int_{\partial\Omega} u^2(r,x,t) \,dx\,dt\\
       &+ \int_{\partial\Omega} u^2(0,x,t) \,dx\,dt + K(\|\mu_1\|_{C,2r} +\|\mu_2\|_{C,2r}+\|\mu_2\|^{1/2}_{C,2r}) \int_{\partial\Omega} \left[N^{r}(u)\right]^2 \,dx\,dt.
\end{split}\end{equation}

We integrate the equation \eqref{Sq09a} in $r$ variable and average over $[0,r_0]$.
Because  $\left(\partial_{x_0} u^2\right) x_0=\partial_{x_0}\left(u^2 x_0\right) - u^2$, we see that \eqref{Sq09} becomes

\begin{equation}\label{E:sq10}\begin{split}
&C_3\int_{0}^{r_0}\int_{\partial\Omega} \left(x_0 - \frac{x_0^2}{r_0}\right) |\nabla u |^2 \,dx\,dt\,dx_0
+ \frac{2}{r_0} \int_{0}^{r_0}\int_{\partial\Omega} u^{2}(x_0,x,t)\,dx\,dt\,dx_0 \\
&\leq    \int_{\partial\Omega} u^2(r_0,x,t)\,dx\,dt
       + \int_{\partial\Omega} u^2(0,x,t) \,dx\,dt\\& + K(\|\mu_1\|_{C,2r} +\|\mu_2\|_{C,2r}+\|\mu_2\|^{1/2}_{C,2r})\int_{\partial\Omega} \left[N^{r_0}(u)\right]^2 \,dx\,dt.
\end{split}\end{equation}

Truncating the integral on the left-hand side to $[0,r_0/2]$ we finally obtain:

\begin{equation}\label{E:sq00}\begin{split}
&\frac{C_3}2\int_{0}^{r_0/2}\int_{\partial\Omega} |\nabla u |^2 x_0\,dx\,dt\,dx_0
+ \frac{2}{r_0} \int_{0}^{r_0}\int_{\partial\Omega} u^{2}(x_0,x,t)\,dx\,dt\,dx_0 \\
&\leq    \int_{\partial\Omega} u^2(r_0,x,t)\,dx\,dt
       + \int_{\partial\Omega} u^2(0,x,t) \,dx\,dt\\& + K(\|\mu_1\|_{C,2r} +\|\mu_2\|_{C,2r}+\|\mu_2\|^{1/2}_{C,2r}) \int_{\partial\Omega} \left[N^{r_0}(u)\right]^2 \,dx\,dt.
\end{split}\end{equation}
The local estimate for Lemma \ref{L:Square2} is obtained if we do not perform the sum over all coordinate patches, but instead use the estimate we have obtained for a single boundary cube $Q_r$. In this case the terms we denoted by $J_4$ do not cancel, but can be instead estimated in terms of the nontangential maximal function squared or as a product of the square and nontangential maximal functions. Both such terms can be handled leading to bound of the type \eqref{E:sq10aaa}. The Lipschitz constant $L$ makes appearance in \eqref{E:sq10aaa} via the flattening map \eqref{mapping}. The original surface ball $\Delta_r$ is mapped onto a subset of a surface ball $Q_r$ on a flat boundary. The Carlson norm of coefficients of a new PDE depends both of the Carleson norm of the original coefficients near $\Delta_r$ and the Lipschitz norm of the boundary $L$. Due to a deformation of balls by a factor of $L$ oscillation of coefficients might increase roughly by factor of $\sqrt{1+L^2}$. As the Carleson norm contains square of the oscillation it will increase up to a factor of $1+L^2$ as stated. \qed 
\vspace{1mm}

The following corollary is obtained from Lemma~\ref{L:Square} after further estimating the first integral on the right hand side of \eqref{E:sq00}.

\begin{corollary}\label{C:Square} Let $\Omega$ be as in Lemma~\ref{L:Square}.
Let $u$ be a nonnegative weak solution of \eqref{E:u}. For some small $r_0>0$ depending on the geometry of the domain $\Omega$, there exist constants $C_1,C_2>0$ such that
for $\epsilon=\|\mu_1\|_{C,2r} +\|\mu_2\|_{C,2r}+\|\mu_2\|^{1/2}_{C,2r}$
\begin{equation}\begin{split}\label{E:sq15bb}
\|S^{r_0/2}(u)\|^2_{L^2(\partial\Omega)}&=\int_{0}^{r_0/2} \int_{\partial\Omega} |\nabla u|^2 x_0 \,dx\,dt\,dx_0\\
&\leq    C_1\int_{\partial\Omega} u^2(0, x, t) \,dx\,dt
     + C_2\epsilon\int_{\mathbb{R}^{n}} \left[N^{r_0}(u)\right]^2 \,dx\,dt .\end{split}
\end{equation}
\end{corollary}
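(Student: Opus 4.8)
The plan is to read the Corollary off from inequality \eqref{E:sq00} of Lemma~\ref{L:Square}. Discarding the nonnegative term $\frac{2}{r_0}\int_0^{r_0}\int_{\partial\Omega}u^2$ on its left-hand side and rearranging gives
\[
\frac{C_3}{2}\,\|S^{r_0/2}(u)\|^2_{L^2(\partial\Omega)}\le\int_{\partial\Omega}u^2(r_0,x,t)\,dx\,dt+\int_{\partial\Omega}u^2(0,x,t)\,dx\,dt+K\epsilon\int_{\partial\Omega}N^2_{r_0}(u)\,dx\,dt,
\]
so it remains only to control the first term on the right, the mass of $u$ on the slice at height $r_0$. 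The naive estimate $\int_{\partial\Omega}u^2(r_0,x,t)\,dx\,dt\le\int_{\partial\Omega}N^2_{r_0}(u)\,dx\,dt$ is useless here, since it would put a constant of size $1$ — not $\epsilon$ — in front of the non-tangential maximal function, which is fatal for the absorption carried out in later sections. One must instead bound the slice-mass by $C_1\int_{\partial\Omega}u^2(0,x,t)\,dx\,dt$ with $C_1$ that does \emph{not} involve $\epsilon$.

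The heart of the matter is therefore the a priori estimate
\[
\int_{\partial\Omega}u^2(x_0,x,t)\,dx\,dt\le C\int_{\partial\Omega}u^2(0,x,t)\,dx\,dt,\qquad 0<x_0\le r_0,
\]
with $C$ depending only on $\lambda,\Lambda$ and the character of $\Omega$, and in particular uniform in $x_0$. To establish it I would write $u(X,t)=\int_{\partial\Omega}f\,d\omega^{(X,t)}$, $f=u|_{\partial\Omega}$, with $\omega^{(X,t)}$ the parabolic measure; since $u\equiv 1$ solves \eqref{E:u}, $\omega^{(X,t)}$ is a probability measure, so Jensen's inequality yields $u^2(x_0,x,t)\le\int_{\partial\Omega}f^2\,d\omega^{(x_0,x,t)}$. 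Integrating this over the slice $\{x_0=\mathrm{const}\}$ and applying Fubini reduces the claim to the reciprocal bound $\int_{(x,t)}d\omega^{(x_0,x,t)}(y,s)\,dx\,dt\le C\,dy\,ds$, which is a standard non-degeneracy/change-of-pole property of parabolic measure valid under our hypotheses (cf. \cite{HL}) and, importantly, independent of the $L^p$-solvability we are proving. Equivalently, one interpolates the maximum principle (Lemma~\ref{L:MP}), which furnishes an $L^\infty\to L^\infty$ bound for the solution map restricted to a level set $\{x_0=\mathrm{const}\}$, with the corresponding $L^1\to L^1$ bound, the latter following from the nonnegativity of $u$ and Fubini once the reciprocal estimate is in hand.

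Granting the a priori estimate, the Corollary follows by inserting it (at $x_0=r_0$) into the displayed consequence of \eqref{E:sq00} and solving for $\|S^{r_0/2}(u)\|^2_{L^2(\partial\Omega)}$; this produces \eqref{E:sq15bb} with, for instance, $C_1=2(C+1)/C_3$ and $C_2=2K/C_3$. A minor bookkeeping point is that the height at which the slice-mass is read off and the truncation heights of $S$ and $N$ must be matched; one arranges this by applying Lemma~\ref{L:Square} at a fixed multiple of the $r_0$ in the statement, at the cost of adjusting constants. The one genuine obstacle is the a priori estimate of the previous paragraph: one must verify it is truly uniform in $x_0$ (so that shrinking $r_0$ to exploit a small Carleson norm does not degrade $C_1$) and that its proof does not circularly invoke the solvability at issue — both are fine, since it rests only on the maximum principle and on doubling/comparison estimates for parabolic measure. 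Everything else is routine.
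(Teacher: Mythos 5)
There is a genuine gap, and it sits exactly where you located ``the one genuine obstacle.'' Your plan discards the term $\frac{2}{r_0}\int_0^{r_0}\int_{\partial\Omega}u^2\,dx\,dt\,dx_0$ from the left-hand side of \eqref{E:sq00} and replaces it by the a priori slice bound $\int_{\partial\Omega}u^2(x_0,\cdot)\,d\sigma\le C\int_{\partial\Omega}u^2(0,\cdot)\,d\sigma$, uniformly in $x_0\le r_0$. That bound is not a ``standard non-degeneracy/change-of-pole property'' available under the hypotheses in force here; it is essentially equivalent to the theorem being proved. Via Jensen and Fubini it reduces, as you say, to $\int_{(x,t)}d\omega^{(x_0,x,t)}(y,s)\,dx\,dt\le C\,d\sigma(y,s)$, i.e.\ to an $L^1(d\sigma)\to L^1(d\sigma)$ bound for the solution map restricted to a slice; interpolated against the maximum principle this already yields $L^2\to L^2$ slice bounds, which is the hard analytic core of $L^2$ solvability. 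Doubling and comparison estimates for parabolic measure give none of this: they relate $\omega^{(X_1,t_1)}$ to $\omega^{(X_2,t_2)}$ but say nothing about absolute continuity with respect to $\sigma$, and for operators with merely bounded measurable coefficients (no Carleson control) the measure $\omega^{(x_0,x,t)}$ can be singular with respect to $\sigma$, in which case the reciprocal kernel bound fails outright. Extracting such quantitative control from the Carleson hypothesis is precisely the content of the paper, so invoking it here is circular.

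The paper's proof avoids this entirely by proving something much weaker, namely the mean-value inequality \eqref{E:sq15}:
\[
\int_{\partial\Omega}u^p(r,x,t)\,dx\,dt\le\frac{2}{r}\int_0^r\int_{\partial\Omega}u^p(x_0,x,t)\,dx\,dt\,dx_0+\epsilon\int_{\partial\Omega}N_r^p(u)\,dx\,dt,
\]
first for $p=1$ (by a lengthy comparison of $u$ with localized constant-coefficient solutions, a mean-value property of elliptic supersolutions from \cite{DPP}, decay estimates, and $L^2$ energy bounds for the difference, all requiring the Carleson norms to be small) and then for $p=2$ by interpolation with the trivial $p=\infty$ case. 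The slice term $\int u^2(r_0,\cdot)$ is then absorbed into the averaged term $\frac{2}{r_0}\int_0^{r_0}\int u^2$, which was deliberately produced \emph{on the left-hand side} of \eqref{E:sq00} (by integrating \eqref{Sq09a} in $r$ and averaging) for exactly this purpose. That is the structural feature of Lemma~\ref{L:Square} your argument throws away at the first step. To repair your proposal you would either have to prove the uniform slice bound from scratch (which amounts to redoing the main theorem) or revert to proving an inequality of the form \eqref{E:sq15} and keeping the averaged term on the left.
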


\begin{proof}
For any $ 1\leq p \leq \infty$, our goal is to show that for small $r>0$ and a nonnegative solution $u$
\begin{equation}\label{E:sq15}\begin{split}
 &\int_{\partial\Omega} u^p(r,x,t)\,dx\,dt \\
 &\leq \frac{2}{r}\int_{0}^{r}\int_{\partial\Omega} u^{p}(x_0,x,t)\,dx\,dt\,dx_0 + C_2\epsilon \int_{\partial\Omega} \left[N^{r}(u)\right]^p \,dx\,dt.
\end{split}\end{equation}
Clearly \eqref{E:sq00} and \eqref{E:sq15} gives us \eqref{E:sq15bb}.

When $p=\infty$, \eqref{E:sq15} holds by the maximum principle even with $\epsilon=0$. If \eqref{E:sq15} is true for $p=1$, then the interpolation argument yields \eqref{E:sq15} for any $1\le p\le\infty$.  Hence our goal is narrowed down to establish
\begin{equation}\begin{split}
 &\int_{\partial\Omega} u(r,x,t)\,dx\,dt \leq \frac{2+C_2\epsilon}{r}\int_{0}^{r}\int_{\partial\Omega} u(x_0,x,t)\,dx\,dt\,dx_0+ C_2\epsilon \int_{\partial\Omega} N^{r}(u) \,dx\,dt.
\end{split}\end{equation}
Consider a subsolution of $u$ that satisfies
\[
v_{t} = \di \left( A \nabla v \right) + \boldsymbol{B} \cdot \nabla v
\]
in the region $(\delta r, r) \times \partial\Omega$ that is strictly away from the boundary $\partial\Omega$. Here $\delta=\delta(\varepsilon) \in (0,1)$ will be determined later. We impose on $v$ the boundary conditions
$v = u$ on $\{r\} \times \partial\Omega$ and $v=0$ on $\{\delta r\} \times \partial\Omega$. If we are able to establish
\begin{equation}\label{E:sqv}
\int_{\partial\Omega} v(r,x,t) \,dt\,dx
\leq \frac{2}{(1-\delta)r} \int_{\delta r}^{r} \int_{\partial\Omega} v(x_0,x,t) \,dt\,dx\,dx_0+ \epsilon \int_{\partial\Omega} N^{r}(u) \,dx\,dt
\end{equation}
then the same inequality holds for $u$ as $v\le u$. Our conclusion will follow by choosing $\delta$ such that $2/(1-\delta) = 2+\epsilon$.


We construct a sequence of solutions $\{v_{m}\}_{m=-\infty}^{\infty}$ in two steps. Consider the usual cover of $\partial\Omega$ by a sequence
of parabolic boundary balls $Q(x_m,t_m,r)$ for some $(x_m,t_m)\in \partial\Omega$. As usual, we may assume that at most $K=K(n,N)>0$ such balls overlap. Let a nonnegative ${\widetilde v}_{m}$ solves the PDE
\[
\left({\widetilde v}_{m}\right)_{t} = \di \left( A \nabla {\widetilde v}_{m} \right) + \boldsymbol{B} \cdot \nabla {\widetilde v}_{m}.
\]
in $[\delta r,r]\times\partial\Omega$ with vanishing boundary data everywhere except on $\{r\}\times Q(x_m,t_m,r)$. Because the boundary balls $Q(x_m,t_m,r)$ cover $\partial\Omega$ we may arrange via partition of unity that ${\widetilde v}_{m}$ are nonnegative and supported on $\{r\}\times \partial\Omega$ such that
$$\sum_m \widetilde{v}_m = v = u,\qquad \text{ on $\{r\}\times \partial\Omega$}.$$

Hence, by the maximum principle it follows that
$$\sum_m \widetilde{v}_m = v \le u,\qquad \text{ on $[\delta r,r]\times \partial\Omega$}.$$

Next, let $0\le v_m\le \widetilde{v}_m$ be defined as follows. Let $k_1,\,k_2$ be positive integers to be defined later. We introduce new parabolic balls scaled by a factor $k_1$ in space and $k_2$ in time. Namely, 
for $r=r(k_1,k_2)>0$ small enough so that the parabolic boundary ball
$$Q_{k_1r,k_2r^2}(x_m,t_m):=\{(y,s)\in \partial\Omega:\,|x_m-y|\le k_1r\text{ and }|t_m-s|\le k_2 r^2\}$$
can localized to a single local coordinate chart let $v_m$ be a solution of the equation
\[
\left(v_{m}\right)_{t} = \di \left( A \nabla v_{m} \right) + \boldsymbol{B} \cdot \nabla v_{m}\quad\text{ in }(\delta r, r)\times Q_{k_1r,k_2r^2}(x_m,t_m)
\]
with vanishing initial and lateral boundary conditions on parabolic boundary of $(\delta r, r)\times Q_{k_1r,k_2r^2}(x_m,t_m)$ everywhere except on
$$v_m=\widetilde{v}_m \qquad\text{on }\{r\}\times Q_r(x_m,t_m).$$
By the maximum principle on $(\delta r, r)\times Q_{k_1r,k_2r^2}(x_m,t_m)$ we have $v_m\le \widetilde{v}_m$, hence if we extend $v_m$ by zero outside of this set
we have
$$v_m\le \widetilde{v}_m\qquad\text{ everywhere on }[\delta r,r]\times\partial\Omega$$.
It follows that
$$\sum_m {v}_m = v = u,\quad \text{ on $\{r\}\times \partial\Omega$}\quad\text{and}\quad \sum_m {v}_m \le v,\qquad \text{ on $[\delta r,r]\times \partial\Omega$}.$$

If we establish the inequality
\begin{equation}\label{E:sqvm}\begin{split}
& \int_{\partial\Omega} v_{m}(r,x,t) \,dt\,dx \\
&\leq \frac{2}{(1-\delta)r} \int_{\delta r}^{r} \int_{\partial\Omega} v_{m}(x_0,x,t) \,dt\,dx\,dx_0
+ \epsilon \int_{Q_r(x_m,t_m)} N (u)\,dt\,dx,
\end{split}\end{equation}
then \eqref{E:sqv} holds at it can be obtained by summing over all $m$.
The last term (with non-tangential maximal function) becomes $\epsilon K(n,N) \int_{\partial\Omega} N (u)\,dt\,dx$, where $K(n,N)$
is the maximum number of overlaps of parabolic balls $Q_r(x_m,t_m)$ at a single boundary point. This number is independent of $r$ and only depend on the geometry of $\partial\Omega$.

We shall consider \eqref{E:sqvm} in three ranges of $t$. Firstly, for
$t< t_m-r^2$ the solution $v_m$ vanishes. For any point $(r,y,s)$ with $(y,s)\in Q_r(x_m,t_m)$
we have a pointwise estimate
$$v_m(r,y,s)\le N^r(u)(y',s'), \text{ for all } (y',s')\in Q_{r/a}(y,s)$$
for a boundary parabolic ball $Q$ and $a>0$ being the aperture of the cones $\Gamma_a$. By averaging over $Q_{r/a}(y,s)$ then yields
$$\|v_m\|_{L^\infty(\{r\}\times Q_r(x_m,t_m))}\le \frac{C_a}{r^{n+1}}\int_{Q_r(x_m,t_m)}N^r(u)\,dx\,dt=:C_a\Phi_m.$$

This is a $L^\infty$ bound on the boundary data of $v_m$. It follows by the maximum principle that $0\le v_m\le \Phi_m$ everywhere.
At the time $t>t_m+r^2$ the solution $v_m$ will start decaying, due to vanishing boundary data at the whole lateral boundary.
Let us denote by ${\mathcal O}_\tau=[\delta r,r]\times \{|y-x_m|\le k_1r\}\times\{\tau\}$ (in local
coordinates on a coordinate chart containing $[\delta r,r]\times Q_{k_1r,k_2r^2}(x_m,t_m)$). Integration by parts  yields for $t>t_m+r^2$
$$\frac{d}{dt}\|v_m\|^2_{L^2({\mathcal O}_t)}\le -\lambda\|\nabla v_m\|^2_{L^2({\mathcal O}_t)}+\int_{{\mathcal O}_t}|\boldsymbol{B}||v_m||\nabla v_m|\,dX=I_1+I_2,$$
where the second term on the right-hand side can be further estimated by
\[\begin{split}
I_2\le&\frac{\lambda}{2} \int_{{\mathcal O}_t} \left| \nabla v_{m} \right|^{2} \,dX
    + \frac{2}{\lambda} \int_{{\mathcal O}_t} |\boldsymbol{B}|^{2} |v_{m}|^{2} \,dX \\
&\leq \frac{\lambda}{2} \|\nabla v_m\|^2_{L^2({\mathcal O}_t)}
    + \frac{2}{(\delta r)^{2}\lambda} \int_{{\mathcal O}_t} \left( x_0 |\boldsymbol{B}| \right)^{2} |v_{m}|^{2} \,dX
\end{split}\]
because $x_0 \in (\delta r, r)$. We now we apply the Poincar\'e inequality, Lemma~\ref{L:PIg}
\[-\frac{\lambda}{2}\|\nabla v_m\|^2_{L^2({\mathcal O}_t)}\le -\frac{c(n,\lambda)}{r^2}\| v_m\|^2_{L^2({\mathcal O}_t)}.
\]
Hence it follows that
\[
\frac{d}{dt}\|v_m\|^2_{L^2({\mathcal O}_t)}\le \frac1{r^2}\left[-c(n,\lambda)+\frac{2\|\mu_1\|_{C,r}}{\delta^{2}\lambda}\right]\| v_m\|^2_{L^2({\mathcal O}_t)}
\]
For $\|\mu_1\|_{C,r}$ is sufficiently small so that $\frac{2\|\mu_1\|_{C,r}}{\delta^{2}\lambda}\le\frac{c(n,\lambda)}2$ we get by the Gr\"onwall's inequality \[
\|v_m\|^2_{L^2({\mathcal O}_t)}\leq \exp \left(-\frac{c(n,\lambda)(t-t_m-r^2)}{2r^2}\right) \|v_m\|^2_{L^2({\mathcal O}_{t^2})}.
\]
Using the $L^2-L^\infty$ smoothing (c.f. Lemma \ref{L:Caccio} implying that the $L^\infty$ norm of the solution at later times can be estimated using the $L^2$ norm at the earlier time) we will have for all $t\ge t_m+2r^2$
\begin{equation}\begin{split}
\|v_m\|^2_{L^\infty({\mathcal O}_t)}\le& \frac{C}{k_1^{n-1}r^n}\|v_m\|^2_{L^2({\mathcal O}_{t-r^2})}\\\leq& \frac{C}{k_1^{n-1}r^n}\exp \left(-\frac{c(n,\lambda)(t-t_m-2r^2)}{2r^2}\right) \|v_m\|^2_{L^2({\mathcal O}_{t_m+r^2})}\\
\le&\frac{C'}{r^{2n+2}}\exp \left(-\frac{c(n,\lambda)(t-t_m-2r^2)}{2r^2}\right) \left(\int_{Q_r(x_m,t_m)} N (u)\,dt\,dx\right)^2.
\end{split}\end{equation}

It follows that for any $\epsilon'>0$ (to be determined later) we can pick $k_2$ such that
$$C'\exp \left(-\frac{c(n,\lambda)(k_2+2)}{2}\right)<(\epsilon')^2,$$
then for all $t\ge t_m+k_2r^2$

\begin{equation}\label{eq:upper}
\|v_m\|_{L^\infty({\mathcal O}_t)}\le \frac{\epsilon'}{r^{n+1}} \int_{Q_r(x_m,t_m)} N (u)\,dt\,dx=\epsilon'\Phi_m.
\end{equation}

It follow that for $t\le t_m-r^2$ the solution $v_m$ vanishes and for $t\ge t_m+k_2r^2$ the solution is very small. It is therefore sufficient to focus on $t_m-r^2\le t\le t_m+k_2r^2$ and prove that \eqref{E:sqvm} must hold there with all integrals restricted to this time interval.


We would like to compare the solution $v_m$ with a solution of a constant coefficient PDE $w_m$
\[
\left( {w}_{m} \right)_{t} = \di \left( \tilde{A} \nabla {w}_{m} \right)
\]
in $(\delta r, r)\times   Q_{k_1r,k_2r^2}(x_m,t_m) $ that shares the boundary data with $v_m$.  We pick $\tilde{A}$ to be the average of the matrix $A$ over the box
$(\delta r, r)\times Q_{k_1r,k_2r^2}(x_m,t_m)$.
Clearly, ${w}_{m}= 0$ if $t < t_m-r^2$ and \eqref{eq:upper} holds for $w_m$ as well.
Let
\[
\tilde{w}_{m} (X) := \int_{t_m-r^2}^{t_m+k_2r^2} {w}_{m}(X, t) \,dt
\]
which solves the elliptic differential equation
\[
0\le{w}_{m} (\cdot, t_m+k_2r^2) = \di \left( \tilde{A} \nabla \tilde{w}_{m} \right).
\]
Because  ${w}_{m} (\cdot, t_m+k_2r^2) \leq \epsilon' \Phi_{m}$, we consider
\[
{z}_{m} (x_0, x) = \tilde{w}_{m} (x_0, x) - \frac{\epsilon' \Phi_{m}}{2\widetilde{a_{00}}} [(x_0 - (1+\delta) r/2)^{2}-((1-\delta)r/2)^2]\ge 0.
\]
Note that this guarantees that $z_m(\delta r,x)=\tilde{w}_{m} (\delta r, x)$ and $z_m(r,x)=\tilde{w}_{m} (r, x)$. Also,
$\di \left( \tilde{A} \nabla z_{m} \right)=\di \left( \tilde{A} \nabla \tilde{w}_{m} \right)-\epsilon' \Phi_{m}={w}_{m} (\cdot, t_m+k_2r^2)-\epsilon' \Phi_{m}\le0$, and hence $z_m$ is a super-solution of an elliptic PDE with the same boundary data as $\tilde{w}_{m}$.
The mean value property of nonnegative solutions for this PDE has been studied in \cite{DPP}. It has been established there that the following integral inequality holds
\[
\int_{B_{r}(x_m)} z_{m}(r,\cdot) \,dx
\leq \frac{2+C(k_1)}{(1-\delta)r} \int_{\delta r}^{r}\int_{B_{k_{1} r}(x_m)} z_{m} \,dx\,dx_0,
\]
provided  $z_m$ is a solution. Here $C(k_1)\to 0+$ for large $k_1$. Out $z_m$ is not a solution but a super-solution but it is easy to observe that if the estimate above holds for solutions it is also true for
super-solutions as by the comparison principle for elliptic PDEs for any super-solution $z_m$ we can find a solution with same left-hand side but smaller right-hand side in the estimate above. In particular, the estimate above therefore holds for our function $z_m$.

We make a choice of $k_1$ large enough so that $C(k_1)/(1-\delta)\le\epsilon$. Recall that we have chosen
$\delta$ earlier such that $2/(1-\delta)\le 2+\epsilon$. If follows that $\frac{2+C(k_1)}{(1-\delta)r}\le\frac{2+2\epsilon}{r}$.

We apply this for our function $z_m$. If follows that
\[\begin{split}
&\int_{B_{r}(x_m)} \tilde{w}_{m}(r,\cdot) \,dx=\int_{B_{r}(x_m)} z_{m}(r,\cdot) \,dx\le\\
&\leq \frac{2+2\epsilon}{r} \int_{\delta r}^{r}\int_{B_{k_{1} r(x_m)}} \tilde{w}_m \,dx\,dx_0 +
|B_{k_1r}(x_m)|\frac{\epsilon'\Phi_m r^2}{4\widetilde{a_{00}}},
\end{split}\]
where the last term is a (fairly) crude estimate of the contribution of the term $- \frac{\epsilon' \Phi_{m}}{2\widetilde{a_{00}}} [(x_0 - (1+\delta) r/2)^{2}-((1-\delta)r/2)^2]$ that we subtracted off $\tilde{w}_m$. Recall, that we have made a conditional choice of $k_2$ (depending on $\epsilon'$)
but we have not specified $\epsilon'$. We fix this now and choose $\epsilon'=\epsilon4\widetilde{a_{00}}(k_1)^{-n+1}$ which implies that
$$|B_{k_1r}(x_m)|\frac{\epsilon'\Phi_m r^2}{4\widetilde{a_{00}}}\le \epsilon\int_{Q_r(x_m,t_m)}N^r(u)\,dx\,dt.$$
We now go back the $w_m$ and deduce the following inequality
\begin{equation}\label{CCineq}
\begin{split}
&\int_{B_{r}(x_m)\times[t^1,t^3]} {w}_{m}(r,\cdot) \,dx\,dt\le\\
&\leq \frac{2+2\epsilon}{r} \int_{\delta r}^{r}\int_{B_{k_{1} r(x_m)}\times[t_m-r^2,t_m+k_2r^2]} {w}_m \,dx\,dt\,dx_0 +\epsilon\int_{Q_r(x_m,t_m)}N^r(u)\,dx\,dt.
\end{split}
\end{equation}

What remains to be done is to estimate the difference $|w_m-v_m|$ on $[\delta r,r]\times B_{k_{1} r(x_m)}\times[t_m-r^2,t_m+k_2r^2]$ in a norm $L^1$ or any stronger
norm. If we establish
\begin{equation}\label{CCineq2}
\frac1r\|w_m-v_m\|_{L^1([\delta r,r]\times B_{k_{1} r(x_m)}\times[t_m-r^2,t_m+k_2r^2])}\le \frac{\epsilon}{k_1^{n-1}k_2}\int_{B_{k_{1} r}(x_m)\times [t_m-r^2,t_m+k_2r^2]}N^r(u)\,dx\,dt,
\end{equation}
then we obtain from \eqref{CCineq}
\begin{equation}
\begin{split}
&\int_{B_{r}(x_m)\times[t_m-r^2,t_m+k_2r^2]} {v}_{m}(r,\cdot) \,dx\,dt
\leq \frac{2+2\epsilon}{r} \int_{\delta r}^{r}\int_{B_{k_{1} r(x_m)}\times[t_m-r^2,t_m+k_2r^2]} {v}_m \,dx\,dt\,dx_0\\ + &\epsilon\int_{Q_r(x_m,t_m)}N^r(u)\,dx\,dt+\frac{3\epsilon}{k_1^{n-1}k_2}\int_{B_{k_{1} r}(x_m)\times [t_m-r^2,t_m+k_2r^2]}N^r(u)\,dx\,dt,
\end{split}
\end{equation}
which is what we want.  Since $B_{k_{1} r}(x_m)\times [t_m-r^2,t_m+k_2r^2]$ is the stretch of $Q_r(x_m,t_m)$ by the factor of $k_1$ in the spatial variables and factor $k_2$ in the time direction so $B_{k_{1} r}(x_m)\times [t_m-r^2,t_m+k_2r^2]$ is expected to have overlap with approximately $Ck_1^{n-1}k_2$ original Carleson boxes $Q_r(x_j,t_j)$, $j\in\Z$. That means that summing
$$\frac{\epsilon}{k_1^{n-1}k_2}\int_{B_{k_{1} r}(x_m)\times [t_m-r^2,t_m+k_2r^2]}N^r(u)\,dx\,dt,$$
over all $m$ will produce an error term of order $\epsilon\int_{\partial\Omega}N^r(u)\,dx\,dt$. We get the same
error term summing over 
$$\epsilon\int_{Q_r(x_m,t_m)}N^r(u)\,dx\,dt.$$
\vglue2mm

Let us now proceed with the estimate of $\|w_m-v_m\|$. We just use the standard $L^2$ theory. Consider $z_m=w_m-v_m$. Then $z_m$ solves the PDE

$$\left(z_{m}\right)_{t} = \di \left( \tilde{A} \nabla z_{m} \right) - \boldsymbol{B} \cdot \nabla v_{m}+ \di\left((\tilde{A}-A)\nabla v_m\right)$$
on $[\delta r,r]\times B_{k_{1} r}(x_m)\times [t_m-r^2,t_m+k_2r^2]$ with vanishing initial and lateral boundary data (since $v_m$ and $w_m$ coincide there).
Hence we can multiply both sides the the equation by $z_m$ and integrate in space yielding
\[\begin{split}\frac{d}{dt}\|z_m\|^2_{L^2({\mathcal O}_t)}&\le -\lambda \|\nabla z_m\|^2_{L^2({\mathcal O}_t)}\\&+\frac{1}{\delta}\int_{{\mathcal O}_t}x_0|\boldsymbol{B}|\frac{|z_m|}{r}|\nabla v_m|dx\,dx_0+\int_{{\mathcal O}_t}|\tilde{A}-A||\nabla v_m||\nabla z_m|dx\,dx_0\end{split}\]
for all $t_m-r^2<t<t_m+k_2r^2$ using the ellipticity condition, integration by parts and the fact that $\frac{x_0}r\ge \delta$ on $[\delta r,r]$.
Recall the notation ${\mathcal O}_t$ we introduced above, which denotes the time slice of our domain in time $t$.

Using \eqref{E:B1}, \eqref{E:A1} and the Poincar\'e inequality (Lemma \ref{L:PIg})  we obtain
\begin{equation}
\begin{split}
\frac{d}{dt}\|z_m\|^2_{L^2({\mathcal O}_t)}&\le -\lambda \|\nabla z_m\|^2_{L^2({\mathcal O}_t)}+C\frac{\|\mu_1\|^{1/2}_{C,r}}{\delta}\|\nabla v_m\|_{L^2({\mathcal O}_t)}\|\nabla z_m\|_{L^2({\mathcal O}_t)}\\
&+\frac{\max\{k_1,k_2\}\|\mu_2\|^{1/2}_{C,r}}{\delta}\|\nabla v_m\|_{L^2({\mathcal O}_t)}\|\nabla z_m\|_{L^2({\mathcal O}_t)}.
\end{split}
\end{equation}
We eliminate the term $-\lambda \|\nabla z_m\|^2_{L^2({\mathcal O}_t)}$ by using Cauchy-Schwarz on the other two terms
$$\frac{d}{dt}\|z_m\|^2_{L^2({\mathcal O}_t)}\le \left[\frac{C^2\|\mu_1\|_{C,r}+\max\{k^2_1,k^2_2\}\|\mu_2\|_{C,r}}{\lambda\delta^2}\right]\|\nabla v_m\|^2_{L^2({\mathcal O}_t)}.
$$
Since $\|z_m\|^2_{L^2({\mathcal O}_{t_1})}=0$ it follows that for $t<t_m+k_2r^2$
$$\|z_m\|^2_{L^2({\mathcal O}_t)}\le \left[\frac{C^2\|\mu_1\|_{C,r}+\max\{k^2_1,k^2_2\}\|\mu_2\|_{C,r}}{\lambda\delta^2}\right]\int_{t^1}^{t^3}\|\nabla v_m\|^2_{L^2({\mathcal O}_t)}\,dt$$
and hence
\begin{equation}
\begin{split}
&\|z_m\|^2_{L^2([\delta r,r]\times B_{k_{1} r}(x_m)\times [t_m-r^2,t_m+k_2r^2])}\\&\le \left[\frac{C^2\|\mu_1\|_{C,r}+\max\{k^2_1,k^2_2\}\|\mu_2\|_{C,r}}{\lambda\delta^2}\right]k_2r^2\|\nabla v_m\|^2_{L^2([\delta r,r]\times B_{k_{1} r}(x_m)\times [t_m-r^2,t_m+k_2r^2])}.
\end{split}
\end{equation}
The norm $\|\nabla v_m\|^2_{L^2([\delta r,r]\times B_{k_{1} r}(x_m)\times [t_m-r^2,t_m+k_2r^2])}$ can be estimated using Cacciopoli inequality (Lemma \ref{L:Caccio}) by
$$\frac{C_\delta}{r^2}\|v_m\|^2_{L^2([\delta r ,r]\times B_{(k_{1}+\delta) r}(x_m)\times [t_m-(1+\delta) r^2,t_m+(k_2+\delta) r^2])},$$
i.e on a slightly enlarged Carleson box. Here we are enlarging the region in some directions but we do not have to enlarge the interval $[\delta r,r]$ since $v_m$ vanishes on two lateral boundaries when $x_0=\delta r$ and $x_0=r$. Hence we can use an odd reflection across these two boundaries to obtain a boundary version of Cacciopoli.

 This quantity we further estimate using the non-tangential maximal function $N(v_m)\le N(u)$ giving us
\begin{equation}
\begin{split}
&\|z_m\|^2_{L^2([\delta r,r]\times B_{k_{1} r}(x_m)\times [t_m-r^2,t_m+k_2r^2])}\le \left[\frac{C^2\|\mu_1\|_{C,r}+\max\{k^2_1,k^2_2\}\|\mu_2\|_{C,r}}{\lambda\delta^2}\right]
k_2C_\delta\times\\&\times\left|[\delta r ,r]\times B_{(k_{1}+\delta) r}(x_m)\times [t_m-(1+\delta) r^2,t_m+(k_2+\delta) r^2]\right|\times\\&\times\frac{C_a^2}{(\delta r)^{2n+2}}\left(\int_{B_{k_{1} r}(x_m)\times [t_m-r^2,t_m+k_2r^2]}N^r(u)\,dx\,dt\right)^{2}.
\end{split}
\end{equation}
Hence
\begin{equation}\label{eq:estim}
\begin{split}
&\|z_m\|_{L^2([\delta r,r]\times B_{k_{1} r}(x_m)\times [t_m-r^2,t_m+k_2r^2])}\le \\& \left[C^2\frac{\|\mu_1\|_{C,r}+\max\{k^2_1,k^2_2\}\|\mu_2\|_{C,r}}{\lambda\delta^2}k_2C_\delta(2k_1)^{n-1}2k_2r^{n+2}\right]^{1/2}\times\\&\times
\frac{C_a}{(\delta r)^{n+1}}\int_{B_{k_{1} r}(x_m)\times [t_m-r^2,t_m+k_2r^2]}N^r(u)\,dx\,dt\\&=\frac{C(\mu_1,\mu_2,k_1,k_2,\delta,\lambda,a)}{r^{n/2}}\int_{B_{k_{1} r}(x_m)\times [t_m-r^2,t_m+k_2r^2]}N^r(u)\,dx\,dt.
\end{split}
\end{equation}
Hence for the $L^1$ norm we have
\begin{equation}\label{eq:estim2}
\begin{split}
&\|z_m\|_{L^1([\delta r,r]\times B_{k_{1} r}(x_m)\times [t_m-r^2,t_m+k_2r^2])} \\ \le&
\left|[\delta r,r]\times B_{k_{1} r}(x_m)\times [t_m-r^2,t_m+k_2r^2]\right|^{1/2}\times \|z_m\|_{L^2([\delta r,r]\times B_{k_{1} r}(x_m)\times [t_m-r^2,t_m+k_2r^2])}\\
\le &C(\mu_1,\mu_2,k_1,k_2,\delta,\lambda,a)r\int_{B_{k_{1} r}(x_m)\times [t_m-r^2,t_m+k_2r^2]}N^r(u)\,dx\,dt,
\end{split}
\end{equation}
which is the desired estimate. We have to assume Carleson condition on the coefficients $A$, $B$ small enough so that
$$C(\mu_1,\mu_2,k_1,k_2,\delta,\lambda,a)\le\frac{\varepsilon}{k_1^{n-1}k_2}.$$

\end{proof}

\section{Comparability of the non-tangential maximal function and the square function}\label{S3:Comp}

The results of the previous section, namely Lemma \ref{L:Square} immediately imply that
$$\|S^{r/2}(u)\|_{L^2(\partial\Omega)}\le C\|N^{r}(u)\|_{L^2(\partial\Omega)},$$
for any solution $u$ of the parabolic PDE whose coefficients satisfy the Carleson condition with $C>0$ independent of $u$.

We want to establish that the reverse estimate is also true. Our goal is significantly simplified by the following local estimate from \cite{Rn} Theorem 1.3.

\begin{lemma}\label{L:NL}
Let $u$ be a solution on $U$ of \eqref{E:u} whose coefficients satisfy the Carleson conditions \eqref{E:B1}-\eqref{E:A2} on all parabolic balls of size $\le r_0$. Then there exists a constant $C$ such that for any $r\in (0,r_0/8)$,
\begin{equation}\label{E:NL2}
\int_{Q_{r}} \left[N_{a/12} (u)\right]^2 \,dx\,dt
\leq C \left[ \int_{Q_{2r}} \left[A_a (u)\right]^2 \,dx\,dt + \int_{Q_{2r}} \left[S_a (u)\right]^2 \,dx\,dt \right] + Cr^{n+1}|u(A_{Q_r})|^2.
\end{equation}
Here $A_{Q_r}$ be so-called corkscrew point relative to cube $Q_r$ (that is a point inside $U$ of whose distance to the boundary $\partial U$
and $Q_r$ is approximately $r$).
\end{lemma}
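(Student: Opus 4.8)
The plan is to adapt to the parabolic metric the classical ``sawtooth'' comparison of the non-tangential maximal function with square/area-type functionals (this is Theorem~1.3 of \cite{Rn}), splitting the cone at a fixed height threshold $\varepsilon_0 r$, with $\varepsilon_0=\varepsilon_0(a,n)$ small.

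\emph{Heights $y_0\gtrsim r$.} For $(x,t)\in Q_r$ and $(y_0,y,s)\in\Gamma_{a/12}(x,t)$ with $y_0\ge\varepsilon_0 r$ (the part at heights $\gg r$ being controlled by the standing global hypotheses on $u$, as in \cite{Rn}; in the application $N(u)\in L^2$, so only heights $\lesssim r$ matter), such a point sits at height $\sim r$ over a boundary ball well inside $Q_{2r}$ --- this is where the aperture gap $a/12<a$ is spent. I would join it to the corkscrew point $A_{Q_r}$ by a Harnack-type chain (Lemmas~\ref{Holder}, \ref{Harnack}) of a bounded number of parabolic balls of radius $\sim r$ lying in $U\cap Q_{Cr}$, and telescope $u$ along it. Each increment $|u_{B'}-u_{B}|\le C\,|B''|^{-1}\int_{B''}|u-u_{B''}|$ I would bound, using the interior Caccioppoli inequality (Lemma~\ref{L:Caccio}), the spatial Poincar\'e inequality, and a one-step comparison in $t$ governed by $u_t$, by $C\big(|\widehat Q|^{-1}\int_{\widehat Q}(S_a^2(u)+A_a^2(u))\big)^{1/2}$, where $\widehat Q\subset Q_{2r}$ is the boundary shadow of $B''$ of radius $\sim r$; the weights $x_0\sim r$ and $x_0^3\sim r^3$ in \eqref{D:Square}, \eqref{D:Area} are exactly what turn $\int|\nabla u|^2$ and $\int|u_t|^2$ into the square and area functions. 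This gives $|u(y_0,y,s)|^2\le C|u(A_{Q_r})|^2+Cr^{-n-1}\int_{Q_{2r}}(S_a^2(u)+A_a^2(u))$, and integrating over $(x,t)\in Q_r$ (with $|Q_r|\sim r^{n+1}$) yields the right-hand side of \eqref{E:NL2} for this range, with no maximal operator involved.

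\emph{Heights $y_0<\varepsilon_0 r$.} This is the heart of the matter, and here I would invoke the good-$\lambda$ machinery in the parabolic metric (the analogue of the Fefferman--Stein comparison of $N$ and $S$). For $\lambda>0$, decompose $\{(x,t)\in Q_r:N_{a/12}^{\varepsilon_0 r}(u)>\lambda\}$ into parabolic Whitney cubes $\{Q_k\}$ via Lemma~\ref{L:cover}, and on each $Q_k$ estimate the bad set where $N_{a/12}^{\varepsilon_0 r}(u)>2\lambda$ while a localized $\big(S_a^2(u)+A_a^2(u)\big)^{1/2}$ is $\le\gamma\lambda$. Fixing such a good point $(x_0,t_0)\in Q_k$ and, from the Whitney property, a nearby boundary point at which the non-tangential supremum is $\le\lambda$ at scale $\diam Q_k$, one telescopes $u$ from the point realizing the value $>2\lambda$ upward through parabolic balls $B_j$ at heights $\rho_j\sim 2^j y_0$ contained in $\Gamma_a(x_0,t_0)$; by Caccioppoli for $u$ and for $\nabla u$ (Lemmas~\ref{L:Caccio}, \ref{L:Caccio2}) and the parabolic Poincar\'e inequality, the increment at scale $\rho_j$ is $\lesssim\rho_j^{-(n+1)/2}\big(\int_{B_j^*}(x_0|\nabla u|^2+x_0^3|u_t|^2)\big)^{1/2}$, and since $x_0\sim\rho_j$ on $B_j^*$ and the $B_j^*$ have bounded overlap, the sum over $j$ of the \emph{squares} of these increments is $\lesssim S_a^2(u)(x_0,t_0)+A_a^2(u)(x_0,t_0)\le\gamma^2\lambda^2$. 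Converting this into a good-$\lambda$ bound, summing in $\lambda$, switching apertures via Lemma~\ref{L:Equ} and using Lemma~\ref{L:AS} ($A_a\lesssim S_a$) where convenient, and that the localized square-plus-area functional lies in $L^2(Q_{2r})$, then completes \eqref{E:NL2}.

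The hard part will be precisely the step ``converting the sum-of-squares bound into a good-$\lambda$ bound'': the telescoping chain has $\sim\log(\diam Q_k/y_0)$ steps, so passing from the clean $\ell^2$ control of the increments to the $\ell^1$ sum that the telescoping produces must be done without a logarithmic loss. This forces the estimate to be organised as a stopping-time/good-$\lambda$ argument exploiting the geometric decay of the scales together with the bounded overlap of the $B_j^*$, rather than a crude Cauchy--Schwarz; it is exactly here that interior regularity of solutions (De Giorgi--Nash--Moser, i.e. Lemmas~\ref{Holder}, \ref{L:Caccio}, \ref{L:Caccio2}) and the parabolic Poincar\'e inequality relating $u_t$ to the area function are used in an essential way.
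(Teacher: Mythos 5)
The paper does not prove this lemma at all: it is quoted verbatim from \cite{Rn} (Theorem 1.3), with a one-line remark explaining that the form of the last term $Cr^{n+1}|u(A_{Q_r})|^2$ follows from inspecting \cite[Proposition 5.3]{Rn}. So you are attempting to reconstruct a proof the authors deliberately imported. Your treatment of the top part of the cone (heights $y_0\sim r$, a Harnack/telescoping chain of boundedly many parabolic balls joining the point to the corkscrew point $A_{Q_r}$, each increment controlled via Lemmas \ref{L:Caccio}, \ref{L:Caccio2} and Poincar\'e by averages of $x_0|\nabla u|^2$ and $x_0^3|u_t|^2$ over $Q_{2r}$) is sound: with a bounded number of balls the Cauchy--Schwarz loss is a harmless constant, and this correctly produces the $|u(A_{Q_r})|^2$ term.

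The genuine gap is exactly where you flag it, and your proposed repair does not work. In the small-heights regime you try to derive a \emph{pointwise} bound at the bad point by telescoping through $\sim\log(r/y_0)$ dyadic scales and controlling the $\ell^1$ sum of increments by the square/area function evaluated at a \emph{single} nearby good point. The bounded overlap of the $B_j^*$ and the geometric decay of the scales give precisely the $\ell^2$ control you already state; they cannot close the gap to the $\ell^1$ sum, which costs an unbounded factor $\sqrt{\log(r/y_0)}$. No stopping-time reorganisation rescues this, because the underlying pointwise inequality $N(u)\lesssim S(u)+A(u)+|u(A_{Q_r})|$ is simply false, already for harmonic functions. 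The mechanism actually used (in Dahlberg--Jerison--Kenig type arguments in the elliptic case, and in Section 5 of \cite{Rn} in the parabolic case) is different in kind: one proves an \emph{integrated} $L^2$ estimate of $\sup_{\Gamma_F}|u-c|^2$ over the whole good set $F=\{S_a+A_a\le\gamma\lambda\}\cap Q_k$, obtained by integration by parts against the equation on the sawtooth domain above $F$ (a localized converse of the identity driving Lemma \ref{L:Square} of this paper, with the area function absorbing the $u_t$ terms), and then applies Chebyshev to get $|\{N>2\lambda\}\cap F|\le C\gamma^2|Q_k|$. That integration-by-parts step is the entire content of the lemma and is absent from your proposal; without it, or an explicit appeal to \cite{Rn} as the paper makes, the argument is incomplete.
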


\noindent{\it Remark.} Theorem 1.3 of \cite{Rn} is stated using a different last term on the right-hand side, however by looking into the details of the proof c.f. \cite[Proposition 5.3]{Rn} we see that we can use $Cr^{n+1}|u(A_{Q_r})|^2$ there. Also \cite{Rn} states all results for symmetric operators without any drift term.
However, the proof is a standard stoping time argument adapted from the elliptic setting and the proof of this particular theorem uses the PDE which $u$ is a solution of in one place only. An extra drift term does not cause any issue there. \vglue2mm

Based on this $L^2$ estimates of the non-tangential maximal function we obtain the following global version of the Lemma \ref{L:NL}.

\begin{theorem}\label{T:NG}
Let $u$ be a solution of the equation $u_t-\di(A\nabla u)=\boldsymbol{B}\cdot \nabla u$ in a domain $\Omega$ as in the definition \ref{D:domain} of character $(L,N,C_0)$. Assume that
the matrix $A$ is uniformly elliptic and the vector $\boldsymbol{B}$ is bounded on $\Omega$ and its coefficients satisfy \eqref{carlIII} and \eqref{carlB} with bounded Carleson norm. Then there exists a constant $C$ such that
\[
\int_{\partial \Omega} [N (u)]^2 \,dx\,dt \leq C \left[\int_{\partial \Omega} [S (u)]^2 \,dx\,dt + \int_{\partial \Omega} u^{2}(0, \cdot) \,dx\,dt\right] .
\]
\end{theorem}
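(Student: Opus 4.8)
\emph{Proof sketch.} The plan is to deduce the global estimate from the local one (Lemma~\ref{L:NL}) by a covering argument on $\partial\Omega$; the one delicate point is the accumulation of the corkscrew terms $r^{\,n+1}|u(A_{Q_r})|^2$, which I handle by keeping them at a single scale $r$ comparable to the diameter of the time slices. First I record that, since each time slice $\Omega_\tau$ is a bounded Lipschitz domain of uniformly bounded diameter $\simeq r_0$ (Remark after Definition~\ref{D:domain}), every non-tangential cone in $\Omega$ is automatically truncated at height $\lesssim r_0$, so $N(u)\simeq N^{Cr_0}(u)$ and $S(u)\simeq S^{Cr_0}(u)$ up to constants depending only on $(L,N,C_0)$ and the aperture. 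Next I cover $\partial\Omega$ by finitely-overlapping surface balls $\Delta_j$, each contained in an $L$-cylinder $\Z_j$ on which $\partial\Omega$ is a Lipschitz graph, and flatten $\Z_j\cap\Omega$ by the pullback map~\eqref{mapping}; as recalled in Section~2, $u$ transforms into a solution of an equation of the form~\eqref{E:u} whose coefficients satisfy \eqref{E:B1}--\eqref{E:A2} on all parabolic balls of size $\le r_0$ (this uses \eqref{carlIII}, \eqref{carlB} and Lemma~\ref{L:A}), while $\Omega$-cones pass to comparable $U$-cones (a change of aperture only alters $L^2$ norms by a constant, cf.\ Lemma~\ref{L:Equ}). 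So it suffices to argue on the flat side and then reassemble.

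On the flat side, fix $r = c r_0$ with $c=c(L,N,C_0)$ small and take a finitely-overlapping cover of the relevant boundary region by parabolic cubes $\{Q_r(x_k,t_k)\}$. Summing the estimate of Lemma~\ref{L:NL} over $k$ and using the bounded overlap of the doubled cubes gives, after transferring back to $\Omega$ and summing over the charts $\Z_j$,
\[
\int_{\partial\Omega} N^2(u)\,dx\,dt \;\lesssim\; \int_{\partial\Omega}\bigl(A_a^2(u)+S_a^2(u)\bigr)\,dx\,dt \;+\; \sum_k r^{\,n+1}\bigl|u\bigl(A_{Q_r(x_k,t_k)}\bigr)\bigr|^2 .
\]
The area term is absorbed by the square term: Lemma~\ref{L:AS} gives $A_a(u)\le C\,S_{2a}(u)$ pointwise, and a change of aperture yields $\int_{\partial\Omega}A_a^2(u)\lesssim\int_{\partial\Omega}S^2(u)$. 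Thus everything is reduced to bounding the corkscrew sum by $C\int_{\partial\Omega}u^2(0,\cdot)\,dx\,dt$.

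This is the crux. Because $r\simeq r_0\simeq\diam\Omega_\tau$, each corkscrew point $A_{Q_r(x_k,t_k)}$ lies at parabolic distance $\simeq r_0$ from $\partial\Omega$; letting $W_k$ be the Whitney parabolic ball of radius $\simeq r_0$ centred at it, the interior estimate of Lemma~\ref{L:Caccio} gives $|u(A_{Q_r(x_k,t_k)})|^2\lesssim r_0^{-(n+2)}\int_{W_k}|u|^2$, and the $W_k$ have bounded overlap, so
\[
\sum_k r^{\,n+1}\bigl|u(A_{Q_r(x_k,t_k)})\bigr|^2 \;\lesssim\; \frac1{r_0}\int_{\{(X,t)\in\Omega\,:\,\delta(X,t)\simeq r_0\}}|u|^2\,dX\,dt .
\]
For this slab integral I would use the maximum principle (Lemma~\ref{L:MP}): comparing $u$, and then $-u$, with the solution having boundary data $|f|$ gives $|u(X,t)|\le\int_{\partial\Omega}|f|\,d\omega^{(X,t)}$, hence by Cauchy--Schwarz $|u(X,t)|^2\le\int_{\partial\Omega}|f|^2\,d\omega^{(X,t)}$. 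Integrating over the slab and using Tonelli reduces the bound to the ``averaged parabolic measure'' estimate $\int_{\{\delta\simeq r_0\}}\omega^{(X,t)}(\,\cdot\,)\,dX\,dt\lesssim r_0\,\sigma(\,\cdot\,)$, which follows from standard Green-function / parabolic-measure estimates for ${\mathcal L}$, using once more that the time slices are uniformly bounded so that $\omega^{(X,t)}$ with $\delta(X,t)\simeq r_0$ is comparable to normalised surface measure on $\partial\Omega\cap\{|s-t|\lesssim r_0^2\}$. This gives $\int_{\{\delta\simeq r_0\}}|u|^2\,dX\,dt\lesssim r_0\int_{\partial\Omega}u^2(0,\cdot)\,dx\,dt$, so the corkscrew sum is $\lesssim\int_{\partial\Omega}u^2(0,\cdot)\,dx\,dt$, and combining with the square-function term finishes the proof.

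The main obstacle is exactly this last step. Estimating $|u(A_{Q_r})|$ by the non-tangential maximal function at a comparable scale would be circular (the resulting constant is $\ge1$ and cannot be absorbed), and the way around it is to keep the corkscrew terms at the single scale $r\simeq r_0$, where the corkscrew points are genuine interior points of the uniformly bounded time slices, so that the maximum principle converts them into the $L^2$ norm of the boundary data. All other steps (the covering, the bounded-overlap bookkeeping, the aperture changes, and the passage through the pullback maps) are routine given the results of the preceding sections.
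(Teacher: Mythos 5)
Your reduction to the flat setting, the single-scale covering by cubes $Q_{r}$ with $r\simeq r_0$, the bounded-overlap bookkeeping, and the absorption of the area function via Lemma~\ref{L:AS} are all fine, and you have correctly located the crux in the corkscrew terms $r^{n+1}|u(A_{Q_r})|^2$. The problem is your resolution of that crux. Your slab estimate rests on the claim that
$\int_{\{\delta\simeq r_0\}}\omega^{(X,t)}(\,\cdot\,)\,dX\,dt\lesssim r_0\,\sigma(\,\cdot\,)$,
justified by asserting that $\omega^{(X,t)}$ at a corkscrew point is ``comparable to normalised surface measure.'' For the operators in Theorem~\ref{T:NG} the Carleson norm of the coefficients is only assumed \emph{bounded}, not small; in that regime neither absolute continuity of $\omega$ with respect to $\sigma$ nor any quantitative upper bound of the (averaged) Poisson kernel by $\sigma$ is available a priori --- indeed, establishing $\omega\in A_\infty(d\sigma)$ is precisely the goal of Theorem~\ref{T:Main3}, and Theorem~\ref{T:NG} is an ingredient in reaching it (even doubling of $\omega$ is only known under a small Carleson condition on $\boldsymbol{B}$, cf.\ Remark~3 of Section~2). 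So this step is circular: you are invoking, at scale $r_0$, a quantitative comparability of parabolic and surface measure that is of the same nature as the conclusions the square-function/non-tangential-maximal-function machinery is meant to produce. The Kaufman--Wu example already shows such comparability can fail for the heat equation itself on $\Lip_{1/2}$ domains, so it cannot be dismissed as ``standard.''

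The paper avoids this entirely by a normalization-and-compactness argument. Fix $r\simeq r_0$ and work on the subspace of solutions with $\int_{Q_r}u\,dx\,dt=0$; if the estimate $\int_{Q_r}N^2(u)\le C\int_{Q_{2r}}(S^2(u)+A^2(u))$ failed on this subspace, a normalized extremizing sequence $u_k$ (with $\int_{Q_r}N^2(u_k)=1$ and $\int_{Q_{2r}}(S^2+A^2)(u_k)\le 1/k$) would converge, by Arzel\`a--Ascoli and the vanishing of $Du_k$ in $L^2_{\loc}$, to a constant; applying Lemma~\ref{L:NL} to $u-u_{k_j}$ (where the corkscrew term now tends to zero) forces that constant to be $0$, contradicting the normalization. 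For general $u$ one subtracts the mean, and the corkscrew term is thereby replaced by $\bigl(|Q_r|^{-1}\int_{Q_r}u(0,x,t)\,dx\,dt\bigr)^2\cdot r^{n+1}$, i.e.\ by the mean of the \emph{boundary data} over $Q_r$; Cauchy--Schwarz and summation over a finitely-overlapping cover then yield exactly the term $C\int_{\partial\Omega}u^2(0,\cdot)\,d\sigma$ in the statement, with no input whatsoever about the parabolic measure. You would need to replace your maximum-principle/parabolic-measure step with an argument of this kind (or otherwise prove the averaged kernel bound from scratch, which is not easier than the theorem itself).
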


\begin{proof}
We begin with a local inequality based on \eqref{E:NL2}. In the subspace
\[
\mathcal{S} = \left\{ u : \int_{Q_{r}} u \,dx\,dt = 0 \right\},
\]
we wish to show that for some constant $C$
\begin{equation}\label{T3.5E01}
\int_{Q_{r}} [N_{a/12}(u)]^2 \,dx\,dt \leq C \int_{Q_{2r}} [S_a(u)]^2 \,dx\,dt+C\int_{Q_{2r}} [A_a(u)]^2 \,dx\,dt .
\end{equation}
We proceed by contradiction. If \eqref{T3.5E01} fails, then for arbitrary large $C$ there exists $u$ such that
\[
\int_{Q_{r}} [N_{a/12}(u)]^2 \,dx\,dt > C \left[\int_{Q_{2r}} [S_a(u)]^2 \,dx\,dt+\int_{Q_{2r}} [A_a(u)]^2\,dx\,dt\right] .
\]
Therefore we can find a sequence of solutions $\{u_{k}\}_{k=1}^{\infty}$ satisfying
\begin{subequations}\label{T3.5E02}
\begin{equation}\label{T3.5E02a}
\int_{Q_{r}} [N_{a/12} (u_{k})]^2 \,dx\,dt = 1,
\end{equation}
\begin{equation}\label{T3.5E02b}
\int_{Q_{2r}} [S_a (u_{k})]^2 \,dx\,dt \leq \frac{1}{k},\quad \int_{Q_{2r}} [A_a (u_{k})]^2 \,dx\,dt \leq \frac{1}{k},
\end{equation}
\begin{equation}\label{T3.5E02c}
\int_{Q_{r}} u_{k} \,dx\,dt = 0 .
\end{equation}
\end{subequations}

Because of \eqref{T3.5E02a}, for any interior point $(y_0, y, s)\in \Gamma_{a/12} (x,t)$ where $(x,t)\in Q_{r}$, we have
\[
\left|u_{k} (y_0, y, s)\right| \leq C .
\]
for some constant $C>0$ ($C$ depends on the distance $y_0$ to the boundary and blows up as $y_0\to 0+$).
By Arzela-Ascoli theorem, we can find a subsequence $\{u_{k_{j}}\}_{j=1}^{\infty}$ that converges locally uniformly to $u$,  on all compact subsets $K$ of the union of the cones $\Gamma_{a/2} (x,t)$ for $(x,t)\in Q_{2r}$.

Moreover, on such $K$ we have by the estimates for the square and area functions the convergence of the full gradient $Du_k$ to zero, i.e., $\|D u_k\|_{L^2(K)}\to 0$. It follows that $u_k$ has to converge to a function $u$ with $D u=0$ on $K$, hence $u$ is constant on the union of all non-tangential cones $\Gamma_a(x,t)$ where $(x,t)\in Q_{2r}$.


Because $\{u - u_{k_j}\}_{j=1}^{\infty}$ is a sequence of weak solutions, the Lemma~\ref{L:NL} applies
\begin{equation}\begin{split}
& \int_{Q_{r}} [N (u- u_{k_j})]^2 \,dx\,dt \\
&\leq C \left[\int_{Q_{2r}} [S (u- u_{k_j})]^2 \,dx\,dt+\int_{Q_{2r}} [A (u- u_{k_j}) ]^2\,dx\,dt + r^{n+1} (u-u_{k_j})(A_{Q_r})\right] \\
&= C \left[\int_{Q_{2r}} [S (u_{k_j})]^2 \,dx\,dt+\int_{Q_{2r}} [A (u_{k_j}) ]^2\,dx\,dt + r^{n+1} (u-u_{k_j})(A_{Q_r})\right] \\
&\to 0,
\end{split}\end{equation}
by the fact that $u-u_{k_j}\to 0$ at $A_{Q_r}$. Since
$$\|(u- u_{k_j})\|_{L^1(Q_r)}\le C(r)\|(u- u_{k_j})\|_{L^2(Q_r)}\le C(r)\|N(u- u_{k_j})\|_{L^2(Q_r)}\to 0,$$
and the functions $u_{k_j}$ have zero mean on $Q_r$ it follows that $u$ has zero mean as well. As $u$ is constant we get that $u=0$ everywhere.

On the other hand
\begin{equation}\begin{split}
& \int_{Q_{2}} [N (u)]^2 \,dx\,dt = \int_{Q_{2}} \left[\sup_{\Gamma_{a}} \left| u_{k_{j}} - (u_{k_{j}} - u)\right|\right]^2 \,dx\,dt\\
&\geq  \int_{Q_{r}} [N (u_{k_j})]^2 \,dx\,dt - \int_{Q_{r}} [N \left(u_{k_j} - u \right)]^2 \,dx\,dt\to 1,
\end{split}\end{equation}
which contradicts the fact that $N(u)=0$ as $u=0$. Therefore on the subspace $\mathcal{S}$, \eqref{T3.5E01} holds.

For a general $u$, clearly $v=[u-|Q_r|^{-1}\int_{Q_r}u\,dx\,dt]\in\mathcal{S}$ and hence \eqref{T3.5E01} applies to $v$.
This gives
\begin{equation}\begin{split}\label{T3.5E04}
&\int_{Q_{r}} [N (u)]^2 \,dx\,dt \leq\\ &C \left[\int_{Q_{2r}} [S (u)]^2 \,dx\,dt +\int_{Q_{2r}} [A (u)]^2 \,dx\,dt + \left(\int_{Q_{r}} u (0, x, t) \,dx\,dt\right)^{2}\right].\end{split}
\end{equation}
Hence, if we use the Cauchy-Schwarz inequality on the last term and then sum over all parabolic balls $Q_r$ covering $\partial\Omega$  we obtain the global estimate we aimed for (by Lemma \ref{L:AS}).
\end{proof}

\section{$L^2$ solvability.}

\begin{lemma}\label{L:Main} Let $\Omega$ be an admissible domain with character $(L,N,C_0)$ where $L$ is small and $\mathcal{L}u = u_{t} - \di \left( A \nabla u\right) - \boldsymbol{B}\cdot \nabla u$ be a parabolic operator whose matrix satisfies the uniform ellipticity and boundedness for constants $\lambda$ and $\Lambda$ and either
\begin{equation}\label{E:carl6}\begin{split}
d\mu_1 &=\left[\delta(X,t)\left( \sup_{B_{\delta(X,t)/2}(X,t)} |\nabla A| \right)^2+\delta^3(X,t)\left( \sup_{B_{\delta(X,t)/2}(X,t)} |\partial_t A| \right)^2 \right.\\
& \left. +\delta(X,t)\left( \sup_{B_{\delta(X,t)/2}(X,t)} |\boldsymbol{B}| \right)^2\right] \,dX\,dt
\end{split}\end{equation}
or
\begin{equation}\label{E:carl4aa}
d\mu_2=\left(\delta(X,t) |\nabla A|^2+ \delta^3(X,t) |\partial_t A|^2 + \delta(X,t) |\boldsymbol{B}|^2\right) \,dX\,dt
\end{equation}
is density of a small Carleson measure on all Carleson regions of size $\le r_0$. In addition in the case \eqref{E:carl4aa} holds we also assume that
\begin{equation}\label{E:carl4bb}
\delta(X,t) |\nabla A|+\delta^2(X,t) |\partial_t A| + \delta(X,t) |\boldsymbol{B}|\le C,
\end{equation}
for a small constant $C$. Then the Dirichlet problem $\mathcal{L} u = 0$ with data in $L^{2}(\partial\Omega, d\sigma)$ is solvable. Furthermore, for every $f\in L^{2}(\partial\Omega, d\sigma)$, the weak solution $u$ to the parabolic operator $\mathcal{L} u = 0$ satisfies the estimate
\[
\|N (u)\|_{L^{2}(\partial\Omega, d\sigma)} \leq C \|f\|_{L^{2}(\partial\Omega, d\sigma)}
\]
for some constant $C$ depending only on the constants characterizing the domain $\Omega$ and the boundedness and ellipticity of the matrix $A$.
\end{lemma}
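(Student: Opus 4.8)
The plan is to combine the two technical estimates already established — the square function bound of Corollary~\ref{C:Square} and the reverse non-tangential maximal function bound of Theorem~\ref{T:NG} — into a bootstrap argument, exploiting the smallness of the Carleson norm and of $L$ to absorb an error term.

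First I would set up a favourable working environment. By density of $C_0(\partial\Omega)$ in $L^2(\partial\Omega,d\sigma)$ and the Maximum Principle (Lemma~\ref{L:MP}), it suffices to prove $\|N(u)\|_{L^2(\partial\Omega)}\lesssim\|f\|_{L^2(\partial\Omega)}$ for the unique bounded solution $u$ with data $f\in C_0(\partial\Omega)$; a further mollification of $f$ lets us assume that $u$ is regular enough that all the square/area integrals below are a priori finite. Next, since Lemma~\ref{L:Square} (and hence Corollary~\ref{C:Square}) is stated for domains with smooth boundary, I would replace $\Omega$ by the smooth domains $\Omega^\epsilon$ from subsection~\ref{S:alld} — recall $f_\epsilon$ is bi-Lipschitz on time slices with constant depending only on $L$ — prove the estimate on each $\Omega^\epsilon$ with constants independent of the smoothing parameter, and pass to the limit. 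On each coordinate chart the pullback $\rho:U\to\Omega$ of \eqref{mapping}, together with Lemma~\ref{L:A} and the computation of \cite{Rn2}, turns the hypotheses \eqref{E:carl6} (resp. \eqref{E:carl4aa}--\eqref{E:carl4bb}) into the conditions \eqref{E:UEB}, \eqref{E:B1}--\eqref{E:A2} on $U$, with Carleson norms $\|\mu_1\|_C,\|\mu_2\|_C$ bounded in terms of $\|\mu\|_C$ and $L^2$, hence small.

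With these reductions the heart of the argument is short. Writing $\epsilon:=\|\mu_1\|_{C,2r_0}+\|\mu_2\|_{C,2r_0}$, Corollary~\ref{C:Square} gives
\[
\|S^{r_0/2}(u)\|_{L^2(\partial\Omega)}^2\le C_1\|f\|_{L^2(\partial\Omega)}^2+C_2\,\epsilon\,\|N^{r_0}(u)\|_{L^2(\partial\Omega)}^2 ,
\]
while Theorem~\ref{T:NG} (which invokes Lemma~\ref{L:AS} to control the area function by the square function) gives
\[
\|N(u)\|_{L^2(\partial\Omega)}^2\le C\big(\|S(u)\|_{L^2(\partial\Omega)}^2+\|f\|_{L^2(\partial\Omega)}^2\big).
\]
Because the time slices of an admissible domain have uniformly bounded diameter (comparable to $r_0$), the non-tangential cones — hence $N(u)$ and $S(u)$ — are effectively truncated at height $\approx r_0$; splitting the cone at height $r_0/2$ and covering the portion at heights in $(r_0/2,Cr_0)$ by finitely many interior parabolic cubes, the interior Caccioppoli and Harnack estimates (Lemmas~\ref{L:Caccio}, \ref{Harnack}) bound that portion's contribution to $\|S(u)\|_{L^2}^2$ by $C\|S^{r_0/2}(u)\|_{L^2}^2+C\|f\|_{L^2}^2$ (alternatively one iterates Corollary~\ref{C:Square} over a dyadic family of scales $\le r_0$). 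Combining the three displays and using $\|N^{r_0}(u)\|_{L^2}\le\|N(u)\|_{L^2}$ yields
\[
\|N(u)\|_{L^2}^2\le C'\|f\|_{L^2}^2+C'C_2\,\epsilon\,\|N(u)\|_{L^2}^2 .
\]
Choosing $\varepsilon$ in the statement small enough that $\max\{L^2,\|\mu\|_{C,r_0}\}<\varepsilon$ forces $C'C_2\,\epsilon<\tfrac{1}{2}$; absorbing the last term on the left gives $\|N(u)\|_{L^2}^2\le 2C'\|f\|_{L^2}^2$ with $C'=C'(\lambda,\Lambda,L,N,C_0)$. Passing to the limit in the smooth approximations $\Omega^\epsilon$ and then extending by density the solution operator $f\mapsto u$ from $C_0(\partial\Omega)$ to all of $L^2(\partial\Omega,d\sigma)$ (as in the Remark following the definition of $L^p$ solvability) furnishes the asserted $L^2$ solvability and estimate.

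The step I expect to cost the most care is the bookkeeping around truncation heights together with the smooth-domain approximation: Corollary~\ref{C:Square} controls only the square function truncated at height $r_0/2$, whereas Theorem~\ref{T:NG} is phrased for the untruncated $N$ and $S$, so one must check that the ``deep interior'' part (heights between $r_0/2$ and the slice diameter) really does contribute a quantity absorbable into $\|f\|_{L^2}^2$ and into the left-hand side, with every constant uniform in the smoothing parameter and independent of $r_0$. Everything else is a mechanical assembly of results already proved above.
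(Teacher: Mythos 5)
Your overall architecture matches the paper's: reduce to a smooth domain via subsection \ref{S:alld}, pull back to $U$, and then run an absorption argument combining Corollary \ref{C:Square} with Theorem \ref{T:NG} using the smallness of $\|\mu_1\|_C+\|\mu_2\|_C$. However, the step you yourself flag as delicate --- reconciling the truncation heights --- is where your argument has a genuine gap. You propose to bound the ``deep'' portion of $\|S(u)\|_{L^2}^2$ (heights in $(r_0/2,Cr_0)$) by $C\|S^{r_0/2}(u)\|^2+C\|f\|^2$ using Caccioppoli and Harnack. But Caccioppoli on interior cubes of scale $\approx r_0$ converts $r_0\int_{\{\delta\gtrsim r_0\}}|\nabla u|^2$ into $r_0^{-1}\int_{\{\delta\gtrsim r_0\}}u^2$, and the only available bound for that quantity is $\lesssim\|N(u)\|_{L^2(\partial\Omega)}^2$ with a constant of order one (the deep annulus is unbounded in time, so one cannot trade $u^2$ for $\|f\|^2$ there, and the cubes do not reach down into the region seen by $S^{r_0/2}$). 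An order-one multiple of $\|N(u)\|^2$ added to the right-hand side cannot be absorbed, so the bootstrap does not close. The paper resolves the mismatch differently: it iterates the two inequalities entirely at truncated heights to get $\|N^{r}(u)\|^2\le C\|f\|^2+C\|\mu\|_{Carl}\|N^{4r}(u)\|^2$, and then proves $\|N^{4r}(u)\|^2\le M\|N^{r}(u)\|^2$ by the forward-in-time Harnack inequality (Lemma \ref{Harnack}): every point of $\Gamma_a^{4r}(x,t)$ is dominated, for a nonnegative solution, by a point of $\Gamma_{8a}^{r}(x,t)$ at a slightly later time, after which Lemma \ref{L:Equ} changes the aperture back. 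The large constant $M$ is harmless because it multiplies the small Carleson norm, i.e.\ one needs $C\|\mu\|_{Carl}M<1$, not an order-one absorption.

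A second, related omission: you apply Corollary \ref{C:Square} (and, in your covering step, Lemma \ref{Harnack}) directly to the solution $u$ with signed data. Both require nonnegativity --- the proof of Corollary \ref{C:Square} rests on subsolution comparisons and the mean-value property of nonnegative supersolutions, and Harnack of course fails for signed solutions. The paper first writes $f=f^+-f^-$, applies the whole argument to the nonnegative solutions $u^{\pm}$, and combines the two estimates at the end. Incorporating this decomposition and replacing your deep-annulus covering by the Harnack domination $N_a^{4r}(u^{\pm})\le M N_{8a}^{r}(u^{\pm})$ would repair the proof.
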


\begin{proof} Note that we may assume that $\Omega$ in addition to satisfying Definition $\ref{D:domain}$ also has a smooth boundary. This is due to the subsection \ref{S:alld} where we have established existence of a $C^\infty$ diffeomorphism $f^\epsilon:\Omega\to\Omega_\varepsilon$, which allows us to consider our parabolic PDE on a smooth domain $\Omega_\epsilon$ instead of $\Omega$. The new equation on $\Omega_\epsilon$ will have coefficients of small Carleson norm, if the original coefficients and the constant $L$ (from the character of the domain) are assumed to be small. Note also, that there is no issue with a further pull-back of our PDE onto the upper half-space $U$, since the composition $(f^\epsilon)^{-1}\circ\rho:U\to\Omega$  (where $\rho:U\to\Omega_\epsilon$) is a map of the type we considered in the subsection \ref{S11:PC}.

Consider $f^+=\max\{0,f\}$ and $f^-=\max\{0,-f\}$ where $f\in C_0(\partial\Omega)$ and denote the corresponding solutions with these boundary data $u^+$ and $u^-$, respectively.  Hence we may apply the Corollary~\ref{C:Square} separately to $u^+$ and $u^-$. By the maximum principle, these two solutions are nonnegative. It follows that for any such nonnegative $u$ we have
\[
 \|S^{r}(u)\|^{2}_{L^{2} (\partial\Omega)} \leq C \|f\|^{2}_{L^2 (\partial\Omega)} + C (\|\mu\|^{1/2}_{C}+\|\mu\|_{C}) \|N^{2r}(u)\|^{2}_{L^{2}(\partial\Omega)}
\]
and Theorem~\ref{T:NG}
\[
\|N^{r}(u)\|^{2}_{L^{2}(\partial\Omega)} \leq C\|f\|^{2}_{L^2 (\partial\Omega)} + C \|S^{2r}(u)\|^{2}_{L^2(\partial\Omega)}.
\]
Here $\|\mu\|_{C}$ is the Carleson norm of $(\ref{E:carl6})$ on Carleson regions of size $\le r_0$. Since we are assuming $\|\mu\|_{C}$ is small, clearly we have $\|\mu\|_{C}\le C\|\mu\|^{1/2}_{C}$.
By rearranging these two inequalities, we obtain, for $0<r\le r_0/8$,
\[
\|N^{r}(u)\|^{2}_{L^{2}(\partial\Omega)} \leq C\|f\|^{2}_{L^2 (\partial\Omega)} + C \|\mu\|^{1/2}_{C} \|N^{4r}(u)\|^{2}_{L^{2}(\partial\Omega)}.
\]
Here $N^h$ denotes the truncation at height $h$. If for some constant $M>0$, if we prove
\begin{equation}\label{E:NR}
\|N^{4r}(u)\|^{2}_{L^{2}(\mathbb{R}^{n})} \leq M \|N^{r}(u)\|^{2}_{L^{2}(\mathbb{R}^{n})},
\end{equation}
then for $\|\mu\|_{C}$ small (less than $1/(CM)^2$), we obtain \eqref{E:estim3}.

We first make an observation that for any, $(y_0, y, s) \in \Gamma_{a}^{4r} (x,t)$, there exists a point $(z_0, z, \tau^{*}) \in \Gamma_{8a}^{r}(x,t)$ such that $\tau^{*} > s + r^2$. Hence by Lemma~\ref{Harnack} (Harnack inequality), there exists an a priori constant $M$ such that
\[
u(y_0, y, s) \leq M u(z_0, z, \tau^{*}) .
\]
Therefore, we obtain
\[
N_{a}^{4r} (u) \leq M u(z_0, z, \tau^{*}) \leq M N_{8a}^{r} (u).
\]
Hence, if we establish that the non-tangential maximal functions $N_{8a}^{r} (u)$ and $N_{a}^{r} (u)$ defined using cones of different aperture are equivalent,  then we are done. The equivalence of these norms is established in Lemma \ref{L:Equ} above. The result for $N(u)$ follows, by combining estimates for $N(u^+)$ and $N(u^-)$. A this point we can use the non-truncated version of nontangential maximal function to state our results since our domain $\Omega$ is admissible and hence bounded in space (not time). But this gives that for sufficiently large $d=\sup_{\tau\in\mathbb R}{\mbox{diam}(\Omega\cap \{t=\tau\})}$ we have
$$N(u)=N^d(u).$$ Iterating \eqref{E:NR} then gives for arbitrary small $r>0$
$$\|N(u)\|^2_{L^2(\partial\Omega)}=\|N^{d}(u)\|^{2}_{L^{2}(\partial\Omega)} \leq C(r,d) \|N^{r}(u)\|^{2}_{L^{2}(\partial\Omega)}.$$
\end{proof}

\begin{bibdiv}
   \begin{biblist}

   \bib{BZ}{article}{
   author={Ball, J.},
   author={Zarnescu, A.},
   title={Partial regularity and smooth topology-preserving approximations of rough domains},
   journal={Arxiv, 1312.5156v1},
}

   \bib{Br1}{article}{
   author={Brown, R.},
   title={The method of layer potentials for the heat equation in Lipschitz
   cylinders},
   journal={Amer. J. Math.},
   volume={111},
   date={1989},
   number={2},
   pages={339--379},
}

    \bib{Br2}{article}{
   author={Brown, R.},
   title={The initial-Neumann problem for the heat equation in Lipschitz
   cylinders},
   journal={Trans. Amer. Math. Soc.},
   volume={320},
   date={1990},
   number={1},
   pages={1--52},
}
  \bib{Da}{article}{
   author={Dahlberg, B.},
   title={Estimates of harmonic measure},
   journal={Arch. Rational Mech. Anal.},
   volume={65},
   date={1977},
   number={3},
   pages={275--288},
}
				
   \bib{Da1}{article}{
   author={Dahlberg, B.},
   title={Poisson semigroups and singular integrals},
   journal={Proc. Amer. Math. Soc.},
   volume={97},
   date={1986},
   number={1},
   pages={41--48},
}
    \bib{DK}{article}{
   author={Dahlberg, B.},
   author={Kenig, Carlos E.},
   title={Hardy spaces and the Neumann problem in $L^p$ for Laplace's
   equation in Lipschitz domains},
   journal={Ann. of Math. (2)},
   volume={125},
   date={1987},
   number={3},
   pages={437--465},
}

  \bib{Di}{article}{
   author={Dindo{\v{s}}, M.},
   title={Existence and uniqueness for a semilinear elliptic problem on
   Lipschitz domains in Riemannian manifolds},
   journal={Comm. Partial Differential Equations},
   volume={27},
   date={2002},
   number={1-2},
   pages={219--281},
}

   \bib{DPP}{article}{
   author={Dindo\v{s}, M.},
   author={Petermichl, S.},
   author={Pipher, J.},
   title={The $L^p$ Dirichlet problem for second order elliptic operators
   and a $p$-adapted square function},
   journal={J. Funct. Anal.},
   volume={249},
   date={2007},
   number={2},
   pages={372--392},
    }

 \bib{DPP2}{article}{
   author={Dindo\v{s}, M.},
   author={Petermichl, S.},
   author={Pipher, J.},
   title={BMO solvability and the $A_\infty$ condition for second order parabolic operators},
   journal={to appear in Annales de l'Institut Henri Poincar\'e/Analyse non lin\'eaire, arXiv:1510.05813},
    }

   \bib{DPR}{article}{
   author={Dindo\v{s}, M.},
   author={Pipher, J.},
   author={Rule, D.},
   title={The boundary value problems for second order elliptic operators satisfying a Carleson condition},
   journal={to appear in Comm. on Pure and Appl. Math., arXiv:1301.0426},
    }

    \bib{FJ}{article}{
   author={Fabes, E. B.},
   author={Jodeit, M., Jr.},
   title={$L^{p}$ boundary value problems for parabolic equations},
   journal={Bull. Amer. Math. Soc.},
   volume={74},
   date={1968},
   pages={1098--1102},
}

\bib{FR}{article}{
   author={Fabes, E.},
   author={Rivi{\`e}re, N.},
   title={Dirichlet and Neumann problems for the heat equation in
   $C^{1}$-cylinders},
   conference={
      title={Harmonic analysis in Euclidean spaces (Proc. Sympos. Pure
      Math., Williams Coll., Williamstown, Mass., 1978), Part 2},
   },
   book={
      series={Proc. Sympos. Pure Math., XXXV, Part},
      publisher={Amer. Math. Soc.},
      place={Providence, R.I.},
   },
   date={1979},
   pages={179--196},
}

    \bib{FS}{article}{
   author={Fabes, E.},
   author={Safonov, M.},
   title={Behavior near the boundary of positive solutions of second order
   parabolic equations},
   booktitle={Proceedings of the conference dedicated to Professor Miguel de
   Guzm\'an (El Escorial, 1996)},
   journal={J. Fourier Anal. Appl.},
   volume={3},
   date={1997},
   number={Special Issue},
   pages={871--882},
}

\bib{FSa}{article}{
   author={Fabes, E.},
   author={Salsa, S.},
   title={Estimates of caloric measure and the initial-Dirichlet problem for
   the heat equation in Lipschitz cylinders},
   journal={Trans. Amer. Math. Soc.},
   volume={279},
   date={1983},
   number={2},
   pages={635--650},
}

 \bib{FKP}{article}{
   author={Fefferman, R.},
   author={Kenig, C.},
   author={Pipher, J.},
   title={The theory of weights and the Dirichlet problem for elliptic
   equations},
   journal={Ann. of Math. (2)},
   volume={134},
   date={1991},
   number={1},
   pages={65--124},
}

 \bib{GT}{book}{
   author={Gilbarg, D.},
   author={Trudinger, N.},
   title={Elliptic partial differential equations of second order},
   series={Grundlehren der Mathematischen Wissenschaften [Fundamental
   Principles of Mathematical Sciences]},
   volume={224},
   edition={2},
   publisher={Springer-Verlag},
   place={Berlin},
   date={1983},
   pages={xiii+513},
}

 \bib{HL1}{article}{
   author={Hofmann, S.},
   author={Lewis, J.},
   title={$L^2$ solvability and representation by caloric layer
   potentials in time-varying domains},
   journal={Ann. of Math. (2)},
   volume={144},
   date={1996},
   number={2},
   pages={349--420},
}

   \bib{HL}{article}{
   author={Hofmann, S.},
   author={Lewis, J.},
   title={The Dirichlet problem for parabolic operators with singular drift
   terms},
   journal={Mem. Amer. Math. Soc.},
   volume={151},
   date={2001},
   number={719},
   pages={viii+113},
    }

    \bib{JM}{article}{
   author={Jakab, T.},
   author={Mitrea, M.},
   title={Parabolic initial boundary value problems in nonsmooth cylinders with data in anisotropic Besov spaces},
   journal={Math. Res. Lett},
   volume={13},
   date={2006},
   number={5-6},
   pages={825--831},
}

    \bib{JK}{article}{
   author={Jerison, D.},
   author={Kenig, C.},
   title={The Neumann problem on Lipschitz domains},
   journal={Bull. Amer. Math. Soc. (N.S.)},
   volume={4},
   date={1981},
   number={2},
   pages={203--207},
}

 \bib{KKPT2}{article}{
   author={Kenig, C.},
   author={Kirchheim, B.},
   author={Pipher, J.},
   author={Toro, T.},
   title={Square Functions and the $A_\infty$ Property of Elliptic Measures},
   journal={arXiv:1409.7131},
     }

   \bib{KKPT}{article}{
   author={Kenig, C.},
   author={Koch, H.},
   author={Pipher, J.},
   author={Toro, T.},
   title={A new approach to absolute continuity of elliptic measure, with
   applications to non-symmetric equations},
   journal={Adv. Math.},
   volume={153},
   date={2000},
   number={2},
   pages={231--298},
    }

   \bib{KP}{article}{
   author={Kenig, C.},
   author={Pipher, J.},
   title={The Dirichlet problem for elliptic equations with drift terms},
   journal={Publ. Math.},
   volume={45},
   date={2001},
   number={1},
   pages={199--217},
    }

   \bib{KW}{article}{
   author={Kaufman, R.},
   author={Wu, J.-M.},
   title={Parabolic measure on domains of class ${\rm Lip}\,\frac12$},
   journal={Compositio Math.},
   volume={65},
   date={1988},
   number={2},
   pages={201--207},
}

\bib{LM}{article}{
   author={Lewis, J.},
   author={Murray, M.},
   title={The method of layer potentials for the heat equation in
   time-varying domains},
   journal={Mem. Amer. Math. Soc.},
   volume={114},
   date={1995},
   number={545},
   pages={viii+157},
}

   \bib{LS}{article}{
   author={Lewis, J.},
   author={Silver, J.},
   title={Parabolic measure and the Dirichlet problem for the heat equation
   in two dimensions},
   journal={Indiana Univ. Math. J.},
   volume={37},
   date={1988},
   number={4},
   pages={801--839},
}

   \bib{Lie}{book}{
   author={Lieberman, G.},
   title={Second order parabolic differential equations},
   publisher={World Scientific Publishing Co. Inc.},
   place={River Edge, NJ},
   date={1996},
   pages={xii+439},
}

\bib{Ne}{book}{
   author={Ne\v{c}as, J.},
   title={Les m\'ethodes directes en th\'eorie des \'equations elliptiques},
   publisher={Masson et Cie, \'Editeurs, Academia},
   place={Paris, Prague},
   date={1967},
   pages={xii+351},
}   

   \bib{N}{article}{
   author={Nystr{\"o}m, K.},
   title={The Dirichlet problem for second order parabolic operators},
   journal={Indiana Univ. Math. J.},
   volume={46},
   date={1997},
   number={1},
   pages={183--245},
}

   \bib{R}{article}{
   author={Rios, C.},
   title={$L^p$ regularity of the Dirichlet problem for elliptic
   equations with singular drift},
   journal={Publ. Mat.},
   volume={50},
   date={2006},
   number={2},
   pages={475--507},
    }

   \bib{Rn}{article}{
   author={Rivera-Noriega, J.},
   title={Absolute Continuity of Parabolic Measure and Area Integral Estimates in
   Non-cylindrical Domains},
   journal={Indiana Un. Math. Jour.},
   volume={52},
   date={2003},
   number={2},
   pages={477--525},
    }

 \bib{Rn2}{article}{
   author={Rivera-Noriega, J.},
   title={Perturbation and Solvability of Initial $L^p$ Dirichlet Problems for Parabolic Equations
over Non-cylindrical Domains},
   journal={Canad. J. Math.},
   volume={66},
   date={2014},
   number={2},
   pages={429–-452},
    }

    \bib{St}{book}{
   author={Stein, E.},
   title={Harmonic analysis: real-variable methods, orthogonality, and
   oscillatory integrals},
   series={Princeton Mathematical Series},
   volume={43},
   note={With the assistance of Timothy S. Murphy;
   Monographs in Harmonic Analysis, III},
   publisher={Princeton University Press},
   place={Princeton, NJ},
   date={1993},
   pages={xiv+695},
}

\bib{Sw}{article}{
   author={Sweezy, C.},
   title={$B^q$ for parabolic measures},
   journal={Studia Math.},
   volume={131},
   date={1998},
   number={2},
   pages={115--135},
}

   \end{biblist}
\end{bibdiv}

\end{document}